\newcommand{\bburl}[1]{\textcolor{blue}{\url{#1}}}
\newtheorem{thm}{Theorem}[section]
\newtheorem{cor}[thm]{Corollary}
\newtheorem{lem}[thm]{Lemma}
\newtheorem{prop}[thm]{Proposition}
\newtheorem{conj}[thm]{Conjecture}
\theoremstyle{definition}
\theoremstyle{definition}
\newtheorem{defi}[thm]{Definition}
\newtheorem{rem}[thm]{Remark}
\theoremstyle{remark}
\newcommand\be{\begin{equation}}
\newcommand\ee{\end{equation}}
\newcommand\bee{\begin{equation*}}
\newcommand\eee{\end{equation*}}
\newcommand\ben{\begin{enumerate}}
\newcommand\een{\end{enumerate}}
\newcommand{\R}{\ensuremath{\mathbb{R}}}
\newcommand{\C}{\ensuremath{\mathbb{C}}}
\newcommand{\Z}{\ensuremath{\mathbb{Z}}}
\newcommand{\Q}{\mathbb{Q}}
\numberwithin{equation}{section}
\DeclareMathOperator{\sgn}{sgn}
\DeclareMathOperator{\re}{Re}
\DeclareMathOperator{\im}{Im}
\DeclareMathOperator{\Res}{Res}
\newcommand{\ovA}[1]{\overline{#1\raisebox{3mm}{}}}
\newcommand{\ovB}[1]{\overline{#1\raisebox{4mm}{}}}
\newcommand\reallywidehat[1]{%
	\savestack{\tmpbox}{\stretchto{%
			\scaleto{%
				\scalerel*[\widthof{\ensuremath{#1}}]{\kern.1pt\mathchar"0362\kern.1pt}%
				{\rule{0ex}{\textheight}}
			}{\textheight}%
		}{2.4ex}}%
	\stackon[-6.9pt]{#1}{\tmpbox}%
}
\title[Spectral Moment Formulae for $GL(3)\times GL(2)$ $L$-functions III]{Spectral Moment Formulae for $GL(3)\times GL(2)$ $L$-functions III: The Twisted Case}
\author[C.-H. Kwan]{Chung-Hang Kwan}
\email{\textcolor{blue}{\href{mailto: ucahckw@ucl.ac.uk}
{ucahckw@ucl.ac.uk}}}
\address{University College London}
\subjclass[2010]{11F55 (Primary) 11F72 (Secondary)}
\keywords{Moments of $L$-functions,  CFKRS Moment Conjectures,  Period Integrals,   Spectral Reciprocity, Motohashi formula, Automorphic Forms,  Automorphic $L$-functions, Rankin-Selberg $L$-functions, Twisting,  Maass forms,  Eisenstein series, Poincar\'e series, Whittaker Functions}
\date{\today}
\begin{document}

\maketitle

\begin{abstract}
	This is a sequel to our previous articles \cite{Kw23, Kw23a+}. In this work, we  apply  recent techniques  that fall under the banner of  `Period Reciprocity' to study  moments of  $GL(3)\times GL(2)$ $L$-functions in the non-archimedean aspects,  with a view towards   the `Twisted Moment Conjectures' formulated by  CFKRS. 

\end{abstract}


\section{Introduction}\label{intr}

\subsection{Reciprocity}\label{reciprsec} This article is motivated by the important works of Young \cite{Y11}  and Blomer-Humphries-Khan-Milinovic  \cite{BHKM20} regarding the spectral theory for the fourth moment of the Dirichlet $L$-functions in the prime conductor aspect.  Their works featured a very curious  \textit{reciprocity relation}  of the form
\begin{align}\label{basicmototwis}
 \mathcal{M}_{4}(p) \ := \ \sum_{\substack{\chi \ (\bmod\, p) \\ \chi \neq \chi_{0}}}  \ \int_{|t| \ll 1} \ \left|L\left(\frac{1}{2}+it, \chi\right)\right|^4   \ dt
	\   = \  	p^{1/2} \ \sum_{\substack{f: \text{ level $1$ } \\ \text{cusp forms}}} \   L\left( \frac{1}{2}, f\right)^3 \lambda_{f}(p)    \ +   \  (***),
\end{align}
where $p$ is a prime number.  

The formula above can be recognized  as a \textit{non-archimedean} generalization of the  celebrated \textit{spectral reciprocity} formula due to Motohashi  \cite{Mo93, Mo97}:
\begin{align}\label{basicmoto}
	\int\limits_{-\infty}^{\infty}  \ \left|\zeta\left(\frac{1}{2}+it\right)\right|^4 w(t)   \ dt 
	\  \ = \ \  	\sum_{\substack{f: \text{ level $1$ } \\ \text{cusp forms}}} \  L\left( \frac{1}{2}, f\right)^3 \widecheck{w}(t_{f})     \  \ + \  \  (***). 
\end{align}
 Indeed,  (\ref{basicmototwis}) incorporates  an additional  duality between  the  $GL(1)$ twists of  Dirichlet characters and the $GL(2)$   twists of  Hecke eigenvalues when compared to  (\ref{basicmoto}). Furthermore, the reciprocity  (\ref{basicmototwis}) succinctly captures   the essential  inputs that  lead to  a sharp asymptotic formula for $\mathcal{M}_{4}(p)$ ---  it suffices to apply the spectral large sieve and the best  approximation towards the Ramanujan-Petersson conjecture for $GL(2)$ cuspidal Hecke eigenforms to the spectral side of  (\ref{basicmototwis}). 

 The arguments culminating in (\ref{basicmototwis}), as presented by  \cite{Y11} and \cite{BHKM20}, have rendered the elegant-looking non-archimedean reciprocity  even more surprising.    On the one hand, Blomer et. al. \cite{BHKM20}  devised a strategic compositum of spectral and harmonic summation formulae   which can be summarized as  `\textit{Poisson-(Arithmetic) Reciprocity-Poisson-Voronoi-Kuznetsov}'.  One arrived at the reciprocity formula (\ref{basicmototwis}) upon carefully tracing through 
 the delicate   transformations of exponential/character sums  in each step.   On the other hand,  Young \cite{Y11} applied  the spectral method in conjunction with the delta method to a \textit{generalized binary additive problem}.  His strategy was favourable in establishing  the conjecture of \cite{CFKRS05} for the moment $\mathcal{M}_{4}(p)$ and turned out to be useful in many other instances, see \cite{HY10,  BBLR20, Wu22, Za19, Be19, Ng21, HN22}.  An entirely different sequence of transformations led to the same formula (\ref{basicmototwis}) once again.  Regardless,  both \cite{Y11} and \cite{BHKM20} greatly enriched the Kuznetsov-based approach introduced by  \cite{Mo93}.

The first goal of this article is to understand the   source  of the \textit{non-archimedean} reciprocity  discovered by \cite{Y11, BHKM20} using  \textit{period integrals}. In our previous work \cite{Kw23}, we  focused on the archimedean aspects of our period identity   and a number of technical features of this article were thus not present in \cite{Kw23}.  For the sake of elucidating the key ideas and the other applications  (our second goal, see Section \ref{twisintro}),  we establish a higher-rank analogue of the reciprocity (\ref{basicmototwis}) involving  $GL(3)\times GL(2)$ Rankin-Selberg $L$-functions.  In this case, it is more natural to describe the result  in the spectral direction. Nevertheless, it is important to note that the reciprocity of our interest stays the same for both the spectral and the arithmetic directions.


\subsection{Main Results}

In this article, we work with the same admissible class of test functions   $\mathcal{C}_{\eta}$ ($\eta>40$) as in \cite{Kw23}.    The class  $\mathcal{C}_{\eta}$ consists of holomorphic functions $H$ defined on the vertical strip  $|\re \mu|< 2\eta$ \ satisfying  	$H(\mu) =   H(-\mu)$  and	$H(\mu)  \ll  e^{-2\pi|\mu|}$ on such a strip.

Let  $\Phi$ be a fixed Hecke-normalized Maass cusp form of $SL_{3}(\Z)$ with Langlands parameters $(\alpha_{1}, \alpha_{2}, \alpha_{3}) \in (i\R)^3$,  Hecke eigenvalues $\lambda_{\Phi}(m)$ ($m\ge 1$),  and 	$\widetilde{\Phi}$ being  the dual form of $\Phi$. We pick an orthogonal basis  $(\phi_{j})_{j=1}^{\infty}$  of  \textbf{even}  Hecke-normalized  Maass cusp forms of $SL_{2}(\Z)$ with $\Delta\phi_{j}=  ( 1/4-\mu_{j}^2) \, \phi_{j}$ and\,   $\lambda_{j}(a)$  ($a\ge 1$) being the Hecke eigenvalues of $\phi_{j}$.  

Let  $L\left( s , \phi_{j} \otimes  \Phi \right)$ and $L\left( s, \Phi \right)$ be the Rankin-Selberg $L$-function of  the pair $(\phi_{j}, \Phi)$ and the standard $L$-function of $\Phi$ respectively. We use the notation $\Lambda(\cdots)$ to denote the  completed $L$-function of $L(\cdots)$. The Hurwitz $\zeta$-function and the additively-twisted $L$-function of $\Phi$ are denoted by  $\zeta(s,a)$ and  $L(s, a/c; \Phi)$ respectively,  and define 	
\begin{align}\label{duamoadd}
		\mathcal{L}^{(a)}_{\pm}\left(s_{0}, s; \Phi \right)  \ \ := \ \    \sum_{dr \mid a} \   \frac{d^{2s_{0}-1} \mu(r)}{ r^{s_{0}} }\ \overline{\lambda_{\Phi}}\left( \frac{a}{dr}\right)  \  \sideset{}{^*}{\sum}_{\ell \ (\bmod\, d)} \  \zeta\left( 2s-s_{0}, \frac{\ell}{d}\right) 	L\left( s_{0}; \mp \frac{r\overline{\ell}}{d}; \Phi\right). 
\end{align}
See Section \ref{autoLfunc} for details.


\begin{thm}\label{twisdirc}

\noindent For  \  $\frac{1}{4}+ \frac{1}{200}  <  \sigma  < \frac{3}{4}$ and $H\in \mathcal{C}_{\eta}$, the following reciprocity formula holds:
\begin{align}\label{dirimom}
\hspace{15pt} 	   \sum_{j=1}^{\infty} \  H\left(\mu_{j}\right)  & \   \frac{\lambda_{j}(a)\Lambda(s, \phi_{j}\otimes \widetilde{\Phi}) }{\langle \phi_{j}, \phi_{j}\rangle}  \ + \   \int_{(0)} \   H\left(\mu\right) 
\frac{ \sigma_{-2\mu}(a) a^{-\mu}\Lambda( s+\mu,   \widetilde{\Phi} )\Lambda\left( 1-s+\mu,   \Phi \right)}{\left|\Lambda(1+2\mu)\right|^2} \ \frac{d\mu}{4\pi i} \nonumber\\
&\nonumber\\[-0.2in]
 &    \ = \  \frac{ a^{-s}}{2}  \ L(2s, \Phi) \ \prod_{p\mid a} \ \left\{  \overline{\lambda_{\Phi}}\left(p^{o_{p}(a)}\right) \ - \   \frac{\overline{\lambda_{\Phi}}\left(p^{o_{p}(a)-1}\right) }{p^{2s}} \right\} \nonumber\\
&\hspace{40pt} \  \cdot  \  	   \int_{(0)}  \frac{H(\mu)}{|\Gamma(\mu)|^2} \ \prod_{\pm} \ \prod_{i=1}^{3} \ \Gamma_{\R}\left(s \pm \mu - \alpha_{i}\right) \ \frac{d\mu}{2\pi i} \nonumber\\
&\nonumber\\[-0.2in]
&\hspace{80pt}  \ + \  \frac{a^{s-1}}{2}  \    L( 2(1-s), \widetilde{\Phi})  \    \prod_{p\mid a} \ \left\{   \lambda_{\Phi}\left(p^{o_{p}(a)}\right) \ - \   \frac{\lambda_{\Phi}\left(p^{o_{p}(a)-1}\right) }{p^{2(1-s)}} \right\} \nonumber\\
& \hspace{120pt} \ \cdot \   \int_{(0)} \ \frac{H(\mu)}{\left|\Gamma(\mu)\right|^2} \ \prod_{\pm} \ \prod_{i=1}^{3} \ \Gamma_{\R}\left(1-s \pm \mu + \alpha_{i}\right) \ \frac{d\mu}{2\pi i} \nonumber\\
&\nonumber\\[-0.1in]
&\hspace{150pt} \ + \  \frac{a^{-s}}{2}  \   \sum_{\pm} \  \int_{(1/2)} \  	\mathcal{L}^{(a)}_{\pm}\left(s_{0}, s; \Phi \right)   (\mathcal{F}_{\Phi}^{(\pm)}H)\left(s_{0},  s\right) \ \frac{ds_{0}}{2\pi i},
\end{align}
where\,  $\sigma_{-2\mu}(a) :=   \sum_{d \mid a} d^{-2\mu}$,  $o_{p}(a)$ denotes the power of $p$ in the prime factorization of $a$,  and  $(\mathcal{F}_{\Phi}^{(\pm)} H)\left(s_{0},  s \right)$ are certain explicit integral transforms to be defined in Section \ref{Stirl}.

\end{thm}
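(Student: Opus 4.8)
The plan is to realize \eqref{dirimom} as two evaluations of one automorphic period on $SL_2(\Z)\backslash\mathbb{H}$, in the spirit of \cite{Kw23}. First I would set up the integral
\[
\mathcal{I}_a(s)\ :=\ \int_{SL_2(\Z)\backslash\mathbb{H}} P_a(z)\,\overline{\Phi|(z)}\,E(z,s)\,\frac{dx\,dy}{y^{2}},
\]
where $\Phi|$ is the restriction of $\Phi$ to the copy of $GL(2)$ in the upper-left block of $GL(3)$, $E(z,s)$ the standard $SL_2(\Z)$ Eisenstein series, and $P_a(z)=\sum_{\gamma\in\Gamma_\infty\backslash SL_2(\Z)}f_a(\gamma z)$ a Poincar\'e series with seed $f_a(x+iy)=h(y)\,e(ax)$, the radial weight $h$ chosen (through a Whittaker--Mellin transform) so that the archimedean factors of the $GL(3)\times GL(2)$ Rankin--Selberg period are absorbed and $H$ is reproduced on the spectral side. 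One first checks that, for $s$ in an auxiliary subregion of the strip where all the series below converge absolutely, $\mathcal{I}_a(s)$ makes sense and every unfolding and interchange used is legitimate; the full range $\tfrac14+\tfrac1{200}<\sigma<\tfrac34$ is then reached by meromorphic continuation in $s$, both sides of \eqref{dirimom} being holomorphic there.

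For the \emph{spectral evaluation} I would expand $P_a$ along $(\phi_j)$, the Eisenstein spectrum, and the constant function, and unfold: $\langle P_a,\phi_j\rangle = h^{\vee}(\mu_j)\,\lambda_j(a)$ (the $a$-th Hecke--Fourier coefficient of the even, Hecke-normalized $\phi_j$ being $\lambda_j(a)$), while $\langle P_a,1\rangle=0$ because the frequency $a\ge1$ is nonzero, so no residual term survives. Substituting the $GL(3)\times GL(2)$ Rankin--Selberg period $\int_{SL_2(\Z)\backslash\mathbb{H}}\phi_j(z)\,\overline{\Phi|(z)}\,E(z,s)\,\frac{dx\,dy}{y^2}=c_j\,\Lambda(s,\phi_j\otimes\widetilde\Phi)$ and its Eisenstein analogue $\int_{SL_2(\Z)\backslash\mathbb{H}}E(z,\tfrac12+\mu)\,\overline{\Phi|(z)}\,E(z,s)\,\frac{dx\,dy}{y^2}=c_\mu\,\sigma_{-2\mu}(a)a^{-\mu}\,\Lambda(s+\mu,\widetilde\Phi)\Lambda(1-s+\mu,\Phi)/|\Lambda(1+2\mu)|^2$ (with $\sigma_{-2\mu}(a)a^{-\mu}$ the $a$-th Fourier coefficient of $E(\cdot,\tfrac12+\mu)$, and $c_j,c_\mu$ explicit archimedean/normalization constants), and normalizing $h$ so that the archimedean data collapse to $H(\mu_j)$ resp.\ $H(\mu)$, I would obtain that $\mathcal{I}_a(s)$ equals exactly the left-hand side of \eqref{dirimom}.

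For the \emph{arithmetic evaluation} I would instead unfold the Poincar\'e series against the cusp,
\[
\mathcal{I}_a(s)\ =\ \int_0^\infty h(y)\,\Bigl[\text{$(-a)$-th Fourier coefficient of }\overline{\Phi|}\cdot E(\cdot,s)\Bigr](y)\,\frac{dy}{y^{2}},
\]
and split according to the Fourier expansion of $\overline{\Phi|}\cdot E(\cdot,s)$. The constant term $y^{s}+\varphi(s)\,y^{1-s}$ of $E$ pairs with the $a$-th Fourier--Whittaker coefficient of $\overline{\Phi|}$; evaluating the $y$-integral as a Mellin transform against the $GL(3)$ Whittaker functions of $\Phi$ and of $\widetilde\Phi$ produces the two explicit main terms, with $L(2s,\Phi)$, $L(2(1-s),\widetilde\Phi)$ and the ramified corrections $\prod_{p\mid a}\{\cdots\}$ dictated by the $GL(3)$ Hecke relations, and the $\mu$-integrals of $H(\mu)/|\Gamma(\mu)|^2$ against $\prod_\pm\prod_i\Gamma_{\R}(\cdots)$ being exactly the Mellin data just used. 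The nonzero Fourier modes of $E$, once one opens the Fourier--Whittaker expansion of $\Phi|$ and the Fourier expansion of $E(\cdot,s)$, give rise to sums $\sum_n\lambda_\Phi(n)\,e(\pm n r\overline{\ell}/d)\,(\cdots)$ over rationals with denominator $d\mid a$, the factor $\mu(r)$ coming from the $\zeta(2s)^{-1}$-normalization of the Eisenstein coefficients; applying $GL(3)$ Voronoi summation --- equivalently, the functional equation of $L(s_0,a/c;\Phi)$ --- converts these into the values $L(s_0,\mp r\overline{\ell}/d;\Phi)$, which recombine with the Hurwitz zeta $\zeta(2s-s_0,\ell/d)$ and with $d^{2s_0-1}\mu(r)r^{-s_0}\overline{\lambda_\Phi}(a/dr)$ to build precisely $\mathcal{L}^{(a)}_\pm(s_0,s;\Phi)$, while the residual $y$-integral turns into the Mellin--Barnes integral in $s_0$ with kernel $\mathcal{F}_\Phi^{(\pm)}H$; the two signs reflect the two terms of the $GL(3)$ Voronoi formula. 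Equating the two evaluations of $\mathcal{I}_a(s)$ gives \eqref{dirimom}.

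The hard part will be the arithmetic evaluation, and within it two points: (i) disentangling the double divisor/Hecke structure produced when the Dirichlet convolution of the $GL(3)$ Fourier coefficients with the Eisenstein coefficients is reorganized, so that precisely $\sum_{dr\mid a}d^{2s_0-1}\mu(r)r^{-s_0}\overline{\lambda_\Phi}(a/dr)$ and the ramified corrections $\prod_{p\mid a}\{\cdots\}$ appear with nothing double counted; and (ii) tracking every archimedean factor through the $GL(3)$ Voronoi transform so that $\mathcal{F}_\Phi^{(\pm)}H$ and the gamma integrals in the main terms emerge in the stated closed form. Interwoven with this is the convergence/continuation bookkeeping that fixes the admissible range: one needs the Hurwitz zeta in $\mathcal{L}^{(a)}_\pm$ to stay off its pole on the contour $\re s_0=\tfrac12$ (forcing $\sigma<\tfrac34$) and the best unconditional bounds toward Ramanujan --- for the $\phi_j$ and for $\Phi$ --- so that the spectral sum, the Voronoi-dualized sums and all Mellin--Barnes integrals converge simultaneously in a nonempty region and the contour shifts used to isolate the main terms stay clear of the $GL(3)$ $L$-function poles, which is where the mild constraint $\sigma>\tfrac14+\tfrac1{200}$ enters.
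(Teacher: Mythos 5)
Your overall strategy --- evaluate one automorphic period two ways, spectrally and arithmetically --- is the right one, and several of your arithmetic-side predictions (the reorganization into $\sum_{dr\mid a}d^{2s_{0}-1}\mu(r)r^{-s_{0}}\overline{\lambda_{\Phi}}(a/dr)$ times Hurwitz zeta times additive twists, the Mellin--Barnes kernel $\mathcal{F}_{\Phi}^{(\pm)}H$, the role of the pole at $2s-s_{0}=1$) match what actually happens. But the period you set up is the wrong one, and this is not cosmetic. The $GL(3)\times GL(2)$ Rankin--Selberg $L$-function is \emph{not} produced by a triple product $\int_{SL_{2}(\Z)\backslash\mathbb{H}}\phi_{j}\,\overline{\Phi|}\,E(\cdot,s)\,d\mu$; it is produced by
$\int_{\Gamma_{2}\backslash GL_{2}(\R)}\phi_{j}(g)\,\widetilde{\Phi}\begin{psmallmatrix}g&\\&1\end{psmallmatrix}|\det g|^{s-\frac12}\,dg$
(Proposition \ref{ranselmainthm}), with no Eisenstein series: the complex variable enters through the determinant character, and the extra integration over the determinant direction is precisely what generates the $m_{1}$-sum in $\sum_{m_{1},m_{2}}\mathcal{B}_{\phi}(m_{2})\mathcal{B}_{\Phi}(m_{1},m_{2})(m_{1}^{2}m_{2})^{-s}$. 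Your triple product lives on the two-dimensional quotient $SL_{2}(\Z)\backslash\mathbb{H}$ and, after unfolding $E(z,s)$, pairs $\phi_{j}$ against the $x$-Fourier coefficients of the restriction $\Phi|$ at $y_{0}=1$ --- which are the complicated translated Whittaker periods of Corollary \ref{incomexpf}, not clean Hecke data. So your claimed spectral identity $\int\phi_{j}\overline{\Phi|}E(\cdot,s)=c_{j}\Lambda(s,\phi_{j}\otimes\widetilde{\Phi})$ is false, and the left-hand side of \eqref{dirimom} does not come out of your $\mathcal{I}_{a}(s)$.

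This error propagates to your account of the main terms. You attribute the two explicit main terms to the constant term $y^{s}+\varphi(s)y^{1-s}$ of $E(\cdot,s)$; since there is no Eisenstein series in the correct period, that mechanism does not exist. In the actual argument the first main term is the $a_{0}=0$ (diagonal) piece of the unipotent Fourier expansion, evaluated by Stade's formula \eqref{eqn sta1} and the Hecke relation \eqref{splGL3Hec}; the second main term is the residue of the analytically continued $\mathcal{L}_{\pm}^{(a)}(s_{0},s;\Phi)$ at $s_{0}=2s-1$ picked up when the $s_{0}$-contour is moved to $\re s_{0}=1/2$, and turning that residue into the stated closed form requires both a non-trivial archimedean identity for $\sum_{\pm}(\mathcal{F}_{\Phi}^{(\pm)}H)(2s-1,s)$ (Proposition \ref{secMTcom}) and the combinatorial identity $\mathcal{R}_{\pm}^{(a)}(s;\Phi)=a^{2s-1}L(2s-1,\Phi)\prod_{p\mid a}\{\cdots\}$ (Proposition \ref{agreewithCFK}), which for general $a$ resists a local Euler-product check and is proved globally via the triple Dirichlet series \eqref{tripDS}. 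You flag the divisor combinatorics as ``hard'' but offer no mechanism for it. Finally, you invoke $GL(3)$ Voronoi summation to produce the additively twisted $L$-values and the $\pm$ signs; the paper explicitly avoids Voronoi/functional equations here --- the continuation of $\mathcal{L}_{\pm}^{(a)}$ only needs the holomorphy and polynomial growth of $\zeta(s,\ell/d)$ and $L(s_{0},a/c;\Phi)$, and the $\pm$ split records $\sgn(a_{0}a_{1})$ arising from the Mellin inversion of the oscillatory factor, not the two terms of a Voronoi formula.
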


 Our theorem holds for \textit{general} twists of  $GL(2)$ Hecke eigenvalues  (i.e., $a$ is not necessarily  prime). In this instance, we find it more convenient to describe the dual moment of (\ref{dirimom}) in terms of  additively-twisted $L$-functions and this actually explains the origin of our reciprocity better (see Remark \ref{reas}).   In a number of previous works on twisted moments, e.g., \cite{Liu10, C07, Be16, Y11b,  AK18}, one had to restrict to   Hecke eigenvalue twists at primes  for various technical reasons.

 When $a=p$ is prime,  the dual moment can be expressed in terms of primitive multiplicatively-twisted $L$-functions as in (\ref{basicmototwis}) (see Proposition \ref{cuspdual}). Furthermore, if  $s=1/2$\,  and   $\Phi$ is replaced by the (complete) minimal parabolic Eisenstein series $(E_{\min}^{(3)})^{^*}( \, * \, ; \, (0,0,0))$ of $SL_{3}(\Z)$,  then the dual moment of (\ref{dirimom}) is precisely the  fourth moments of the Dirichlet $L$-functions:
 \begin{align}\label{youfourmo}
 \hspace{10pt} 	\frac{p^{\frac{1}{2}}}{\phi(p)} \  \ \   \sideset{}{^\pm}{\sum}_{\chi \ (\bmod\, p)} \ 	\int_{(1/2)} \   \  \left| L(s_{0}, \chi)\right|^4\,  (\mathcal{F}_{\alpha}^{(+)}H \pm  \mathcal{F}_{\alpha}^{(-)}H)\left(s_{0}, \, 1/2\right)  \frac{\Gamma_{\R}\left(1+ \frac{1\mp 1}{2}-s_{0}\right)}{\Gamma_{\R}\left(s_{0}+ \frac{1\mp 1}{2}\right)}  \ \frac{ds_{0}}{2\pi i},
 \end{align}
 which we will show in  Section \ref{rel4thDIr}.  As a matter of fact, the use of periods unveils that the source of the arithmetic (or non-archimedean) reciprocity in (\ref{youfourmo}) lies in a strikingly simple matrix identity: 
 \begin{align}\label{simpldec}
 		\begin{pmatrix}
 			* & * \\
 			a_{0} & p
 		\end{pmatrix}
 		\begin{pmatrix}
 			y_{0} & \\
 			           & 1
 		\end{pmatrix} \ =  \ \begin{pmatrix}
 		1 & -\overline{a_{0}}/p \\
 		   & 1
 		\end{pmatrix} n(x)a(y)k(\theta) \begin{pmatrix}
 		r & \\
 		   & r
 		\end{pmatrix},
 \end{align}
where the first matrix of (\ref{simpldec}) belongs to $SL_{2}(\Z)$, and the matrices $n(x)$, $a(y)$, $k(\theta)$, $\begin{psmallmatrix}
 r & \\
 & r
 \end{psmallmatrix}$ (as in the Iwasawa decomposition) are solely  responsible for the analytic (or archimedean) reciprocity. See Section \ref{Basicide}  for the details.

 When $a$ is composite, the relevant conversion remains valid but this time (\ref{youfourmo}) would average over all Dirichlet characters\,  $(\bmod\, a)$ --- both  primitive and imprimitive ones. Judging from the experiences of \cite{BHKM20} and \cite{Wu22}, this phenomenon should not be surprising even to  period integral approaches.



\subsection{Twisting and the `Recipe'}\label{twisintro}

Theorem \ref{twisdirc} exhibits a number of interesting  technical similarities with the asymptotic formulae for the  twisted moments of the Riemann  $\zeta$-function and  the Dirichlet $L$-functions.    There is extensive literature on this classical   topic:  Levinson  \cite{Le74}, Deshouillers-Iwaniec \cite{DI84}, Balasubramanian-Conrey-Heath-Brown \cite{BCHB85}, Conrey \cite{C89}, Watt \cite{W95} and Iwaniec-Sarnak \cite{IS99} --- just to name a few.

For more basic applications (say  \cite{Le74},  \cite{Y10},  \cite{BCHB85}, \cite{IS99}) involving    twisted  moments of lower degrees such as 
\begin{align}\label{twisze}
	\int_{T}^{2T} \ \left(\frac{h}{k}\right)^{it}\zeta(1/2+\alpha+it) \zeta(1/2+\beta-it)  \ dt,
\end{align}
or
\begin{align}\label{twisL}
\frac{1}{\phi(q)}	\ \ \sideset{}{^*}{\sum}_{\chi \ (\bmod\, q)} \ \chi(h) \overline{\chi}(k) L\left(1/2+ \alpha, \chi\right) L\left(1/2+ \beta, \overline{\chi}\right),
\end{align}
for any $h,k\ge 1 $ and $\alpha, \beta \ll (\log qT)^{-1}$,  the diagonals usually dominate the asymptotic evaluations  upon inserting the approximate functional equations (AFEs).      However, the twisted variant of the \textit{Moment Conjecture} (or   \textit{`twisted recipe'}) of \cite{CFKRS05}  does predict the existence of \textit{off-diagonal} contributions which are  non-negligible for more refined applications,  see   Bettin-Chandee-Radziwiłł \cite{BCR17}, Conrey-Iwaniec-Soundararajan \cite{CIS19}, Bui-Pratt-Robles-Zaharescu  \cite{BPRZ20} and Conrey \cite{C07}.  In  these works, the search for the essential off-diagonal terms turned out to be extremely subtle.  The  \textit{`shifts'} (i.e., $\alpha, \beta$ in (\ref{twisze})-(\ref{twisL})) in  the twisted moments are crucial in unraveling the combinatorics  behind the assembling of various distinct-looking terms. 
 


A similar situation is encountered in the context of Theorem \ref{twisdirc}.  In view of the applications of  Liu  \cite{Liu10} and  Khan  \cite{Kh15} regarding the    $GL(3)\times GL(2)$ $L$-values,  it was sufficient to extract only the  diagonal as the main term. As in Li \cite{Li11},   their moments were expanded as sums of  geometric terms via the AFEs and  Petersson/ Kuznetsov/ $GL(3)$ Voronoi formulae.  Unfortunately, it is  not clear from their works the locations of the off-diagonal main terms. 
A recurring theme in the area, as exemplified by  \cite{CIS12, CIS19, C07, BCR17},  involves  the identification of specific polar divisors subsequent to the re-packaging of the off-diagonal pieces.  This procedure is usually  delicate and depends heavily on the \textit{arithmetic} of the moment problem.  In Section \ref{ODMt}, we shall observe   the \textit{Motohashi-type structure}  depicted in Theorem \ref{twisdirc}  is indeed very suitable for a  simple extraction of  the off-diagonal main term.  In particular,  the twisted sums of $L$-functions  (\ref{duamoadd})   admit a clear polar divisor $2s-s_{0}=1$.  Overall speaking, we seek the most  canonical and straightforward means  to reveal the desired structure. It appears to us that the use of period integrals is  favourable in this regard.

In  Theorem  \ref{twisdirc}, we  examine the case when  $\Phi$ is a cusp form of $SL_{3}(\Z)$. This is nevertheless adequate to illustrate  the intricacies of understanding the main terms for twisted moments while keeping our exposition to a reasonable length. In fact, our twisted moment is of degree $6$ and its off-diagonal main term corresponds precisely to  the  `\textit{$3$-swap term}',  according to   the set-up and terminology  of Conrey-Keating \cite{CK19, CK16, CK15a, CK15b, CK15c} and Conrey-Fazzari \cite{CF23},   of the twisted cubic moment of $GL(2)$ $L$-functions:
 \begin{align}\label{twiscub}
	\sum_{j=1}^{\infty} \ H(\mu_{j}) \  \frac{ \lambda_{j}(a)\Lambda(1/2- \alpha_{1}, \phi_{j})\Lambda(1/2- \alpha_{2}, \phi_{j})\Lambda(1/2- \alpha_{3}, \phi_{j})}{\langle \phi_{j}, \phi_{j}\rangle}  \ + \ (\text{cont.}), 
\end{align}
which is simply the Eisenstein case of Theorem \ref{twisdirc}, i.e., taking  $\Phi$ to be $(E_{\min}^{(3)})^{^*}( \ * \ ; \, (\alpha_{1}, \alpha_{2}, \alpha_{3}))$. 

Our current knowledge on the   \textit{Dirichlet polynomial} approximations of $L$-functions enables us to extract the `$0$-swap' and  `$1$-swap' terms across various moments and  families, see \cite{HN22, Ng21, BTB22+, CK15c}, \cite{CF23} and \cite{CR22+}  for, respectively, the  unitary, orthogonal and symplectic examples. To go  beyond the $0$-swap and $1$-swap terms, one would require extra arithmetic  structures,  say the Motohashi-type structure present in our case.  Furthermore,  since our method is based on  period integrals,   we are able to  sidestep the technical complications stemming from the use of AFEs and treating the  $J$- and  $K$-Bessel pieces of the spectral Kuznetsov formula separately (see  \cite{Kw23} for further discussions).


Given a twisted moment of $L$-functions,  showing agreement with the relevant twisted recipe  often requires additional symmetries besides  the functional equations of  the $L$-functions. For instance,  certain \textit{finite Euler products}, which account for the ramifications coming  from twisting, satisfy  unexpected  functional relations. Within the unitary families, readers are referred to   \cite{HY10} (Section 6.2-6.3),  \cite{CIS19} (Section 8) and  \cite{BTB22+} (Section 9.3) (cf.  \cite{CK15c} and  \cite{HN22} (Section 5-6)) for the examples.   Established in a prime-by-prime (or \textit{local}) fashion with leverage on the delicate combinatorics of divisors,    these relations result in remarkable combinations and cancellations of terms,  ultimately leading to  the desired  prediction up to an acceptable error.    In the case of Theorem  \ref{twisdirc}, one has to unravel the complicated expression  (\ref{badEPexp}) for the hidden functional relations if one follows the standard approach in the literature. 
 
 In Section \ref{ODMt} of this article, we  proceed \textit{globally} instead by studying the  \textit{triple Dirichlet series}:
 \begin{align}\label{tripDS}
 	\sum_{a=1}^{\infty} \ a^{-w} \ \sum_{d \mid a} \  d^{4s-3} \
 	\sum_{a_{1}= 1}^{\infty}  \ \frac{\mathcal{B}_{\Phi}\left( a/d, a_{1}   \right)}{ (a_{1})^{2s-1}} \cdot S(0, a_{1};d),
 \end{align}
 where $\mathcal{B}_{\Phi}(\,\cdot\, , \, \cdot\, )$'s are the $GL(3)$ Fourier coefficients of $\Phi$ and $S(0, a_{1};d)$ is the Ramanujan sum.  We will explain how (\ref{tripDS}) arises from our period integral method as well as how it takes up  the role of  extra functional relations as in  \cite{HY10, CIS19, HN22,  BTB22+}.     The (multiple) Dirichlet series of $GL(3)$ will provide convenient packaging of  the underlying combinatorics. From which,   the recipe prediction is confirmed with an exact reciprocity identity and for arbitrary twists of Hecke eigenvalues of $GL(2)$.  In many ways,  the content of Section \ref{ODMt} serves as a non-archimedean analogue of the Barnes-type integral identities found in  \cite{Kw23a+}.  This kind of analogy was also investigated by Bump  \cite{Bu84} but in a different situation. Bump constructed an interesting double Dirichlet series  (see equation (9.2) of \cite{Bu84}) that mirrors  the Vinogradov-Takhtadzhyan  formula (see equation (10.1) of \cite{Bu84}).

 In a separate article \cite{Kw23a+}, we  tackled the \textit{analytic} issues for the untwisted  Eisenstein case of Theorem \ref{twisdirc}, including explication of the integral transforms,  regularization and analytic continuation (with  the Fourier expansion for the  $GL(3)$ Eisenstein series).   By combining the techniques of \cite{Kw23a+} with the  \textit{non-archimedean}  ingredients of this article, 
 the twisted recipe for (\ref{twiscub})  (see Conjecture \ref{twiscubconj}) should follow with some extra book-keeping.  It is noteworthy that the majority of our arguments work equally well for the Eisenstein case. 
 
  Both Theorem \ref{twisdirc} and Conjecture \ref{twiscubconj} express their main terms  in a local fashion. This better displays the symmetries ---  the finite part of  each term now consists of a product of $L$-values and   a `modified' Hecke eigenvalue of $GL(3)$.  As a matter of fact, this is a somewhat intriguing phenomenon for the off-diagonal main terms because  the  $GL(3)$ Hecke combinatorics are exploited in rather non-trivial ways. In a broader context,  it might be of interest to study the  twisted recipes for different families (perhaps also revisiting some of the previous works) by  searching for  suitable candidates of  multiple Dirichlet series and their combinatorial relations.





\subsection{Related Works}
In contrast to the Motohashi-type reciprocities which relate two  different-looking moments of $L$-functions,   there are also the recent  `\textit{level reciprocities}' of   Blomer-Khan \cite{BK19a, BK19b} roughly taking the shape
\begin{align}\label{levre}
		\sum_{\substack{f: \text{ level $p$ } \\ \text{cusp forms}}} \   \lambda_{f}(q)  L(1/2, \Phi \otimes  f)L(1/2,f) \ \  \longleftrightarrow \ \ 	\sum_{\substack{f: \text{ level $q$ } \\ \text{cusp forms}}} \   \lambda_{f}(p)  L(1/2, \widetilde{\Phi} \otimes  f)L(1/2,f)
\end{align}
for $p, q$ being primes. In the latter case,   two \textit{similar-looking} moments are connected,  akin to an earlier example  due to Conrey  \cite{C07} (see also  \cite{Y11b, Be16, DN22+}):
\begin{align}\label{Crep}
\hspace{-10pt} 	\sideset{}{^*}{\sum}_{\chi\, (\bmod\, p)} \ \chi(q) |L(1/2, \chi)|^2 \ \  \longleftrightarrow \ \  	\sideset{}{^*}{\sum}_{\chi\, (\bmod\, q)} \ \chi(-p) |L(1/2, \chi)|^2. 
\end{align}
The level reciprocity (\ref{levre})  features the switching of two distinct twists of  $GL(2)$ Hecke eigenvalues that appeared on both sides of the formula. It is conceivable that a certain invariant automorphic linear functional  which incorporates two distinct $GL(2)$  Hecke operators  is responsible for  such kind of reciprocities.  This was nicely confirmed by two different constructions of such functionals, see Zacharias \cite{Za21, Za20+} (for the `$4=2+2$' case)  and  Nunes \cite{Nu23}  (for the `$4=3+1$' case), and for Hecke twists at two coprime square-free integral ideals. 


While the interchange of harmonics $\left(\lambda_{f}(p)\right) \leftrightarrow  \left(\chi  \bmod\, p\right)$ described in (\ref{dirimom})-(\ref{youfourmo}) and (\ref{basicmototwis}) is also of non-archimedean nature (and to some extent a hybrid of the ones in (\ref{levre}) and (\ref{Crep})),  its primary  arithmetic cause has  less to do with the actions of the  Hecke operators  and is actually rather  different from   \cite{Za20+, Za21, Nu23}, see Remark \ref{reas}.

Moreover,  Theorem \ref{twisdirc} is  a higher-degree and  higher-rank analogue of the famous result of  Iwaniec-Sarnak (Theorem 17 of \cite{IS00}), Motohashi (Equation (2.33)-(2.36) of \cite{Mo92}) and  Bykovskii (\cite{By96}) which  has found plenty of applications in the area, see \cite{Mo92}, \cite{Iv01, Iv02}, \cite{J99}, \cite{Fr20}, \cite{IS00}, \cite{Liu18}, \cite{BF21}.





\section{Preliminary}\label{prel}

 In this section, we  collect results and notions  that are essential to our arguments. Some of which have appeared in \cite{Kw23} (Section \ref{prelimwhitt}-\ref{autformgl2}).

  \subsection{Notations and Conventions}\label{notconv}
  
  	Throughout this article,  $\Gamma_{\R}(s):= \pi^{-s/2}\, \Gamma(s/2)$  $(s\in \C)$,  $e(x):=e^{2\pi i x}$  $(x\in \R)$, and $\Gamma_{n}:=SL_{n}(\Z)$  $(n\ge 2)$.   Our test function $H$ lies in the class $ \mathcal{C}_{\eta}$ and $H:= h^{\#}$ (see Definition \ref{defwhittrans}) unless otherwise specified.  Denote by $\theta $ the best progress towards the Ramanujan conjecture for the Maass cusp forms of $SL_{3}(\Z)$.  We have $\theta\le \frac{1}{2}- \frac{1}{10}$, see Theorem 12.5.1 of \cite{Go15}.    We often use the same symbol to denote a function (in $s$) and its analytic continuation.  Also, we adopt the following sets of conventions:
  \begin{enumerate}
  	\item \label{hecke} All Maass cusp forms will be  simultaneous eigenfunctions of the Hecke operators and will be either even or odd. Also, their first Fourier coefficients are  equal to $1$. In this case, the forms are said to be \textbf{Hecke-normalized}. Note that there are no odd form for  $SL_{3}(\Z)$, see Proposition 9.2.5 of \cite{Go15}. 
  	

  	\item Our fixed Maass cusp form  of $SL_{3}(\Z)$ is assumed to be \textbf{tempered at $\infty$}, i.e., its Langlands parameters are purely imaginary.

  \end{enumerate}
  

\subsection{Whittaker Functions \& Transforms}\label{prelimwhitt}
All of the Whittaker functions  in this article are spherical.  The Whittaker function of $GL_{2}(\R)$ is given by 
 \begin{align}\label{gl2wh}
 	W_{\mu}(y) \  := \ 2\sqrt{y} K_{\mu}(2\pi y)
 \end{align}
 for $\mu\in \C$ and $y>0$.   Denote by $  W_{\alpha}\begin{psmallmatrix}
 	y_{0}y_{1} &          &       \\
 	& y_{0} &       \\
 	&          &  1
 \end{psmallmatrix}$ the Whittaker function of $GL_{3}(\R)$, where 
 \begin{align}\label{toricond}
 \hspace{15pt} 	y_{0},\, y_{1}>0 \hspace{15pt} \text{ and } \hspace{15pt} \alpha\in\mathfrak{a}_{\C}^{(3)} \ := \  \left\{ (\alpha_{1},\alpha_{2}, \alpha_{3})\in \C^{3}: \alpha_{1}+\alpha_{2}+\alpha_{3}=0 \right\}. 
 \end{align}
To define $  W_{\alpha}$,  we first introduce the function  
 \begin{align}
 \hspace{-40pt} 	I_{\alpha}\begin{psmallmatrix}
 		y_{0}y_{1} &          &       \\
 		& y_{0} &       \\
 		&          &  1
 	\end{psmallmatrix} \ := \ y_{0}^{1-\alpha_{3}} y_{1}^{1+\alpha_{1}}
 \end{align}
whenever  (\ref{toricond}) holds. Then we define
\begin{align}\label{jac}
		W_{\alpha}\begin{psmallmatrix}
			y_{0}y_{1} &          &       \\
			& y_{0} &       \\
			&          &  1
		\end{psmallmatrix} \  := \  &
 \prod_{1\le j<k\le 3} \, \Gamma_{\R}(1+\alpha_{j}-\alpha_{k}) \nonumber\\
 &\hspace{20pt} \cdot \,  \int_{\R^3} \   I_{\alpha}\left[\begin{psmallmatrix}
	&    & 1\\
	& -1 &    \\
	1 &  &
	\end{psmallmatrix} \begin{psmallmatrix}
	1 & u_{1,2} & u_{1,3} \\
	&     1      & u_{2,3} \\
	&             & 1
	\end{psmallmatrix} 
	\begin{psmallmatrix}
	y_{0}y_{1} &          &       \\
	& y_{0} &       \\
	&          &  1
	\end{psmallmatrix}\right] \, e(-u_{1,2}-u_{2,3}) \,  \prod_{1\le j<k\le 3}  \, du_{j,k},
	\end{align}
	once again for  (\ref{toricond}). The last integral is known as the \textit{Jacquet integral}.   See Chapter 5.5 of \cite{Go15} for details.  We recall the explicit evaluation of the Rankin-Selberg integral of $GL_{3}(\R)\times GL_{2}(\R)$.

 \begin{prop}\label{stadediff1}
  Let  $W_{\mu}$ and\,  $W_{-\alpha}$  be the Whittaker functions of  $GL_{2}(\R)$ and $GL_{3}(\R)$  respectively.  For $\re s\gg 0$, we have
 	\begin{equation}\label{eqn sta1}
	\int_{0}^{\infty} \int_{0}^{\infty} \  W_{\mu}(y_{1}) \, W_{-\alpha}\begin{psmallmatrix}
		y_{0}y_{1} &          &       \\
		& y_{0} &       \\
		&          &  1
	\end{psmallmatrix}\, (y_{0}^2 y_{1})^{s-\frac{1}{2}} \ \frac{dy_{0} dy_{1}}{y_{0}y_{1}^2} \ = \  \frac{1}{4} \,   \prod_{\pm} \prod_{k=1}^{3} \ \Gamma_{\R}\left(s\pm \mu-\alpha_{k}\right). 
 	\end{equation}
 \end{prop}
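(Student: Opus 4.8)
The plan is to compute the Rankin--Selberg integral by unfolding the $GL_3$ Whittaker function via its Jacquet integral representation \eqref{jac} and reducing everything to a product of one-dimensional Mellin transforms of $K$-Bessel functions. First I would substitute the definition \eqref{gl2wh} of $W_\mu$ and the definition of $W_{-\alpha}$ into the left-hand side of \eqref{eqn sta1}. Because everything is spherical and given by explicit $\Gamma$-factor-weighted Jacquet integrals, the cleanest route is arguably not to expand the Jacquet integral at all but instead to invoke the known Mellin--Barnes / Bump-type formula for the $GL_3$ Whittaker function: namely, the double Mellin transform
\[
\int_0^\infty\int_0^\infty W_{-\alpha}\begin{psmallmatrix} y_0 y_1 & & \\ & y_0 & \\ & & 1\end{psmallmatrix} y_0^{s_1} y_1^{s_2}\,\frac{dy_0\,dy_1}{y_0 y_1}
\]
is a ratio of products of $\Gamma_{\R}$-factors in $s_1,s_2$ and the $\alpha_i$ (this is classical, e.g. Stade's formula, and is compatible with Chapter 5.5 of \cite{Go15}). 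I would then treat the remaining integral against $W_\mu(y_1)=2\sqrt{y_1}K_\mu(2\pi y_1)$ as a single Mellin convolution in $y_1$, collapsing the $y_0$-integral first.

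The key steps, in order, are as follows. Step one: change variables so that the measure $(y_0^2 y_1)^{s-1/2}\,dy_0\,dy_1/(y_0 y_1^2)$ becomes a clean product measure; concretely, the $y_0$-integral is a Mellin transform of $y_0\mapsto W_{-\alpha}(\mathrm{diag}(y_0 y_1, y_0, 1))$ in the variable $2s-1$, holding $y_1$ fixed, which by Stade's formula produces $\prod_{k=1}^{3}\Gamma_{\R}(s-\alpha_k)$ divided by suitable normalizing $\Gamma_{\R}$-factors, times a power of $y_1$. Step two: insert this into the $y_1$-integral, which now reads (up to constants and the compensating $\Gamma_{\R}$-factors from \eqref{jac}) $\int_0^\infty K_\mu(2\pi y_1) y_1^{c}\,dy_1/y_1$ for an explicit exponent $c$ depending on $s$; this is the standard Mellin transform of the $K$-Bessel function,
\[
\int_0^\infty K_\mu(2\pi y)\,y^{w}\,\frac{dy}{y} \;=\; \frac{1}{8}\,\pi^{-w}\,\Gamma\!\Big(\frac{w+\mu}{2}\Big)\Gamma\!\Big(\frac{w-\mu}{2}\Big),
\]
valid for $\re w > |\re\mu|$, which accounts exactly for the two factors $\Gamma_{\R}(s+\mu-\alpha_k)$ and $\Gamma_{\R}(s-\mu-\alpha_k)$ coming from the $\pm$ in \eqref{eqn sta1}. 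Step three: bookkeep all the $\Gamma_{\R}$-normalizations — the prefactor $\prod_{1\le j<k\le 3}\Gamma_{\R}(1+\alpha_j-\alpha_k)$ in \eqref{jac}, the normalizing factors in Stade's formula, and the constant $2$ from \eqref{gl2wh} — and check they combine to the single overall constant $1/4$ with no leftover $\alpha$-dependence. Throughout, $\re s \gg 0$ guarantees absolute convergence so that Fubini and the interchange of the $y_0$, $y_1$ and Jacquet integrals are all justified.

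The main obstacle I anticipate is Step one together with Step three: getting the $GL_3$ Whittaker Mellin transform in precisely the normalization used in \cite{Go15} (the Jacquet integral \eqref{jac} carries its own $\Gamma_{\R}$-prefactor, and different sources differ by exactly such factors), and then verifying that every stray $\Gamma_{\R}(1+\alpha_j-\alpha_k)$ and every power of $\pi$ cancels to leave the clean answer $\frac14\prod_\pm\prod_{k=1}^3\Gamma_{\R}(s\pm\mu-\alpha_k)$. One clean way to sidestep the normalization worry is to derive the $GL_3$ double Mellin transform directly from \eqref{jac}: expand $I_{-\alpha}$ on the Bruhat cell, do the three $u_{j,k}$-integrals (two producing Bessel-type factors via $e(-u_{1,2}-u_{2,3})$ and one producing a beta-integral), and recognize the result as an application of the $GL_2$ case plus a Barnes integral; this keeps all constants explicit. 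I would also need to confirm that the final expression, a priori defined only for $\re s\gg 0$, is the one meromorphically continued elsewhere in the paper, but that is immediate since both sides are ratios/products of $\Gamma$-functions. The analytic-continuation remark and the convergence bookkeeping are routine; the genuine content is the Mellin-transform identity for $W_{-\alpha}$ and the constant-chasing.
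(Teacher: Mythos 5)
The paper does not prove this proposition; it simply cites Bump \cite{Bu88}, whose argument is precisely the Mellin--Barnes/Barnes'-second-lemma evaluation that your write-up circles around but does not actually carry out. Measured against that, your proposal has a genuine structural gap in Step one, and the gap propagates.

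The claim that the partial Mellin transform
\[
\int_0^\infty W_{-\alpha}\begin{psmallmatrix} y_0y_1 & & \\ & y_0 & \\ & & 1\end{psmallmatrix} y_0^{2s-1}\,\frac{dy_0}{y_0}
\]
at fixed $y_1$ equals a product of $\Gamma_{\R}$-factors times a \emph{power} of $y_1$ is false. Stade's/Bump's formula evaluates the \emph{double} Mellin transform, and the result is a ratio whose denominator $\Gamma_{\R}(s_1+s_2+\cdots)$ couples the two torus variables; equivalently, by the Vinogradov--Takhtadzhyan representation the partial transform in $y_0$ is a Gamma-product times a nontrivial \emph{integral} of $K$-Bessel functions in $y_1$, not a monomial. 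If the separation you posit were true, the $GL_3$ Whittaker function would factor as (function of $y_0$)$\times$(function of $y_1$), which it does not. One can also see the failure from the shape of the answer: your Step two is a single one-dimensional $K$-Bessel Mellin transform, which produces exactly two Gamma factors $\Gamma(\frac{c+\mu}{2})\Gamma(\frac{c-\mu}{2})$ for one exponent $c$, whereas the right-hand side of \eqref{eqn sta1} entangles $\mu$ with each of the three $\alpha_k$ separately. No bookkeeping of constants in Step three can repair this.

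The correct route is the one implicit in the title of \cite{Bu88}: Mellin-expand $W_\mu(y_1)$ (or apply Mellin--Parseval in $y_1$), insert the double Mellin transform of $W_{-\alpha}$ with its coupling denominator, and recognize the resulting $w$-contour integral as a Barnes integral of the form $\frac{1}{2\pi i}\int \frac{\prod_{i}\Gamma(a_i+w)\prod_{j}\Gamma(b_j-w)}{\Gamma(c+w)}\,dw$, which Barnes' second lemma evaluates to the product of six $\Gamma_{\R}$-factors. Your closing paragraph mentions Barnes integrals, but only as a device for re-deriving the double Mellin transform of $W_{-\alpha}$; the lemma is in fact the engine of the Rankin--Selberg evaluation itself and cannot be omitted. (Minor: your stated Mellin transform of $K_\mu(2\pi y)$ should carry the constant $\tfrac14\pi^{-w}$, not $\tfrac18\pi^{-w}$, but this is moot given the structural issue.)
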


 \begin{proof}
 See  \cite{Bu88}.
 \end{proof}


As in \cite{Kw23, Kw23a+}, the following pair of integral transforms plays an important role in the description of the archimedean component of our main theorem. 

	\begin{defi}\label{defwhittrans}
	Let  $h: (0, \infty) \rightarrow \C$ and $H: i\R \rightarrow \C$ be  measurable functions with $H(\mu)=H(-\mu)$. Then the Kontorovich-Lebedev transform of $h$ is defined by
	\begin{equation}\label{eqn whitranseq}
	h^{\#}(\mu) \ := \ \int_{0}^{\infty} h(y)\, W_{\mu}(y) \ \frac{dy}{y^2},   
	\end{equation}
	whereas its inverse transform is defined by 
	\begin{align}\label{invers}
	H^{\flat}(y) \ :=  \ \frac{1}{4\pi i} \int_{(0)}  \ H(\mu)\, W_{\mu}(y) \ \frac{d\mu}{\left| \Gamma(\mu)\right|^2},
	\end{align}
	provided  the integrals converge absolutely. 	
	\end{defi}

		\begin{defi}
				Let $\mathcal{C}_{\eta}$ be the class of holomorphic functions $H$  on the vertical strip  $|\re \mu|< 2\eta$ such that 
				\begin{enumerate}
					\item $H(\mu)=H(-\mu)$,
					
					\item $H$ has rapid decay in the sense that
					\begin{align}\label{rapdeca}
					H(\mu) \ \ll \ e^{-2\pi|\mu|} \hspace{50pt} (|\re \mu| \ < \  2\eta). 
					\end{align}
				\end{enumerate}
				In this article, we take $\eta>40$ without otherwise specifying. 
			\end{defi}

	\begin{prop}\label{inKLconv}
	For any $H\in \mathcal{C}_{\eta}$, the integral  (\ref{invers}) defining $H^{\flat}$   converges absolutely. Moreover, we have 
				\begin{align}\label{decbdd}
			    H^{\flat}(y) \ \ll \ \min \{y, y^{-1}\}^{\eta} \hspace{20pt} (y \ > \ 0). 
				\end{align}

		\end{prop}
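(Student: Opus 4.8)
The plan is to prove both the absolute convergence of \eqref{invers} and the decay bound \eqref{decbdd} by combining the classical asymptotics of the $K$-Bessel function with contour shifting, exploiting the holomorphy and exponential decay built into the class $\mathcal{C}_{\eta}$. First I would recall the two relevant facts about $W_{\mu}(y)=2\sqrt{y}\,K_{\mu}(2\pi y)$: on one hand, for $y$ bounded away from $0$ one has the uniform exponential decay $W_{\mu}(y)\ll_{\re\mu} e^{-\pi y}$ coming from $K_{\mu}(2\pi y)\ll e^{-2\pi y}$ (for $\mu$ in a fixed vertical strip); on the other hand, as $y\to 0^{+}$ the standard expansion $K_{\mu}(x)\sim \tfrac12\Gamma(\mu)(x/2)^{-\mu}+\tfrac12\Gamma(-\mu)(x/2)^{\mu}$ gives $W_{\mu}(y)\ll \sqrt{y}\,(|\Gamma(\mu)|+|\Gamma(-\mu)|)\,y^{-|\re\mu|}$ on the relevant range, again uniformly for $\mu$ in a vertical strip avoiding the poles of $\Gamma$. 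I would also need the archimedean Stirling-type bound $1/|\Gamma(\mu)|^{2}\ll |\mu|\,e^{\pi|\im\mu|}$ along vertical lines, which controls the growth of the measure $d\mu/|\Gamma(\mu)|^{2}$.

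For the \textbf{convergence statement}, on the line $\re\mu=0$ the weight $1/|\Gamma(it)|^{2}$ grows only polynomially in $t$ (indeed $|\Gamma(it)|^{-2}= \tfrac{t\sinh(\pi t)}{\pi}\ll t e^{\pi|t|}$), while $W_{it}(y)$ for fixed $y>0$ is bounded by something like $e^{-\pi|t|}$ times a polynomial in $t$ (since $K_{it}(x)$ decays exponentially in $|t|$ for fixed $x>0$, by the uniform asymptotics / the integral representation $K_{it}(x)=\int_{0}^{\infty}\cos(x\sinh u)\cos(tu)\,du$ combined with stationary phase, or simply the known bound $K_{it}(x)\ll e^{-\pi t/2}$ for $x$ of moderate size). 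Together with $H(\mu)\ll e^{-2\pi|\mu|}$ these three factors produce an integrand bounded by $e^{-\pi|t|}\cdot\mathrm{poly}(t)$, which is absolutely integrable; this gives absolute convergence of \eqref{invers} for each fixed $y>0$.

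For the \textbf{decay bound} \eqref{decbdd} I would argue by shifting the contour. Since $W_{-\mu}=W_{\mu}$ and the measure is even, we may assume $y\le 1$ and treat $y\ge 1$ by the substitution $\mu\mapsto -\mu$ after noting the case $y\ge 1$ actually follows more directly from the exponential decay of $W_{\mu}(y)$ in $y$; so the real content is small $y$. For $y\le 1$ I would move the line of integration in \eqref{invers} from $\re\mu=0$ to $\re\mu=\eta$ (permissible because $H$ is holomorphic and of exponential decay on $|\re\mu|<2\eta$, and $W_{\mu}(y)/|\Gamma(\mu)|^{2}$ is holomorphic in $\mu$ there — note that the apparent poles of $W_{\mu}(y)$ from $\Gamma(\pm\mu)$ in the small-$y$ expansion are cancelled by the zeros of $1/|\Gamma(\mu)|^{2}$, and in any case $W_{\mu}(y)$ is entire in $\mu$ for fixed $y>0$). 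On the shifted line $\re\mu=\eta$ the small-$y$ bound gives $W_{\mu}(y)\ll \sqrt{y}\,y^{-\eta}\cdot(|\Gamma(\mu)|+|\Gamma(-\mu)|)$, and after combining with $1/|\Gamma(\mu)|^{2}$ and $H(\mu)\ll e^{-2\pi|\mu|}$ one checks the $\mu$-integral converges (the exponential decay of $H$ dominates the at-most-exponential growth of the Gamma factors and the factor $y^{-\eta}$ is uniform in $\mu$), yielding $H^{\flat}(y)\ll y^{1/2-\eta}$... which is the wrong direction.

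\emph{Correction to the contour direction:} to get the bound $y^{\eta}$ for small $y$ one shifts the \emph{other} way, to $\re\mu=-\eta$, where the dominant term of the small-$y$ expansion is the one with $y^{+|\re\mu|}=y^{\eta}$ up to the $\sqrt y$; more precisely one uses that on $\re\mu=-\eta$, $W_{\mu}(y)\ll \sqrt{y}\,y^{\eta}\,(|\Gamma(\mu)|+|\Gamma(-\mu)|)$, giving $H^{\flat}(y)\ll y^{1/2+\eta}\ll y^{\eta}$ for $y\le 1$. By the symmetry $\mu\mapsto -\mu$ (which fixes $H$, fixes the measure, and sends $W_{\mu}\mapsto W_{-\mu}=W_{\mu}$) the estimate for $y\ge 1$ is obtained from the $y\le 1$ one applied after inverting, or directly from shifting to $\re\mu=+\eta$ and using the exponential decay $W_{\mu}(y)\ll e^{-\pi y}$; either way $H^{\flat}(y)\ll y^{-\eta}$ there. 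Combining the two ranges gives \eqref{decbdd}. The main obstacle, and the step requiring care, is the uniformity of the Bessel asymptotics in the complex parameter $\mu$ as $\re\mu$ ranges over $[-\eta,\eta]$ and $|\im\mu|\to\infty$ — one must make sure the implied constants in $W_{\mu}(y)\ll \sqrt y\,(|\Gamma(\mu)|+|\Gamma(-\mu)|)\min\{y,y^{-1}\}^{\eta}$ do not themselves grow too fast in $\im\mu$, so that they are absorbed by $e^{-2\pi|\mu|}$ from $H\in\mathcal{C}_{\eta}$ after multiplying by $|\Gamma(\mu)|^{-2}$; this is where the precise Stirling bounds for $\Gamma$ along vertical lines, together with the classical uniform asymptotic expansions of $K_{\mu}$, are needed, and is the only genuinely technical point.
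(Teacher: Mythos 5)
Your convergence argument and your treatment of the range $y\ge 1$ are fine (for $\re\mu=0$ one even has $|K_{it}(2\pi y)|\le K_{0}(2\pi y)\ll e^{-2\pi y}$ from the integral representation, and $H(\mu)\ll e^{-2\pi|\mu|}$ beats the $e^{\pi|t|}$ growth of $1/|\Gamma(it)|^{2}$). The genuine gap is in the ``correction to the contour direction'': the estimate you use on $\re\mu=-\eta$, namely $W_{\mu}(y)\ll\sqrt{y}\,y^{\eta}\bigl(|\Gamma(\mu)|+|\Gamma(-\mu)|\bigr)$, is false. Since $K_{\mu}=K_{-\mu}$, the small-$y$ expansion you quoted at the start, $K_{\mu}(x)\sim\tfrac12\Gamma(\mu)(x/2)^{-\mu}+\tfrac12\Gamma(-\mu)(x/2)^{\mu}$, contains \emph{both} powers, and on $\re\mu=-\eta$ the term $\Gamma(-\mu)(x/2)^{\mu}$ has modulus $\asymp|\Gamma(-\mu)|\,y^{-\eta}$, so $W_{\mu}(y)$ is generically of size $\sqrt{y}\,y^{-\eta}$ there, not $\sqrt{y}\,y^{\eta}$. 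Worse, the full integrand $H(\mu)\,W_{\mu}(y)\,/\,(\Gamma(\mu)\Gamma(-\mu))$ (with $|\Gamma(\mu)|^{2}$ continued off the line as $\Gamma(\mu)\Gamma(-\mu)=-\pi/(\mu\sin\pi\mu)$) is an even function of $\mu$, so the shift to $\re\mu=-\eta$ produces literally the same integral as the shift to $\re\mu=+\eta$ and hence only the bound $y^{1/2-\eta}$ — exactly the ``wrong direction'' you had already identified. No choice of a single vertical line can give $y^{\eta}$ by taking absolute values of $W_{\mu}$.

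The missing idea is the cancellation between the two halves of $K_{\mu}$, which must be made explicit \emph{before} shifting. Write $K_{\mu}(x)=\frac{\pi}{2\sin(\pi\mu)}\bigl(I_{-\mu}(x)-I_{\mu}(x)\bigr)$ and use $\Gamma(\mu)\Gamma(-\mu)=-\pi/(\mu\sin\pi\mu)$, so that the integrand of (\ref{invers}) becomes a constant times $\sqrt{y}\,\mu\,H(\mu)\bigl(I_{\mu}(2\pi y)-I_{-\mu}(2\pi y)\bigr)$; by the evenness of $H$ and the change of variable $\mu\mapsto-\mu$ the two pieces fold into twice the $I_{\mu}$-piece, which is entire in $\mu$. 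Now shift \emph{that} piece to $\re\mu=\eta$: there $I_{\mu}(2\pi y)\ll y^{\eta}e^{2\pi y}/|\Gamma(\mu+1)|$, and since $1/|\Gamma(1+\eta+it)|\ll e^{\pi|t|/2}$ is absorbed by $H(\mu)\ll e^{-2\pi|\mu|}$, one gets $H^{\flat}(y)\ll y^{1/2+\eta}\le y^{\eta}$ for $y\le 1$. (Equivalently, insert the Mellin--Barnes representation of $K_{\mu}$ and shift the $s$-contour.) This splitting — not the uniformity of Bessel asymptotics you flag at the end — is the real technical point, and it is the content of Lemma 2.10 of Motohashi's book, which the paper cites in place of a proof.
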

		
		\begin{proof}
See Lemma 2.10 of \cite{Mo97}.
			\end{proof}

	\begin{prop}\label{plancherel}
		Under the same  assumptions of Proposition \ref{inKLconv}, we have
		\begin{align}
		(h^{\#})^{\flat}(g) \ = \ h(g)  \hspace{20pt} \text{ and } \hspace{20pt} 
		(H^{\flat})^{\#}(\mu) \ = \ H(\mu). 
		\end{align}
	\end{prop}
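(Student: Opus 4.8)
\textbf{Proof proposal for Proposition \ref{plancherel}.}

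The plan is to deduce both inversion identities from the spectral theory of the Kontorovich--Lebedev transform on $(0,\infty)$, which is essentially the theory of the Mehler--Fock transform in disguise. Recall that $W_\mu(y) = 2\sqrt{y}\,K_\mu(2\pi y)$, so up to the substitution $y \mapsto 2\pi y$ and the weight $\sqrt{y}$, the transform $h \mapsto h^{\#}$ is the classical Lebedev transform $g \mapsto \int_0^\infty g(x) K_\mu(x)\,dx/x$ composed with an elementary change of variables. First I would record the relevant Plancherel/inversion theorem for $K$-Bessel functions: for suitable test functions $g$ on $(0,\infty)$ one has the pair
\begin{align*}
\widehat{g}(\mu) \ = \ \int_0^\infty g(x)\, K_\mu(x)\, \frac{dx}{x}, \qquad g(x) \ = \ \frac{1}{4\pi i}\int_{(0)} \widehat{g}(\mu)\, K_\mu(x)\, \frac{d\mu}{|\Gamma(\mu)|^2} \cdot c,
\end{align*}
for an explicit constant $c$, the point being that $\{K_\mu(x)/\sqrt{|\Gamma(\mu)|^2}\}_{\mu \in i\R}$ behaves like a complete orthogonal family in the appropriate $L^2$ sense. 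Tracing the normalizations in \eqref{gl2wh}, \eqref{eqn whitranseq} and \eqref{invers} pins down that $c$ matches so that $(h^\#)^\flat = h$ holds on the nose; this is the content of Lemma 2.10 of \cite{Mo97} quoted above, from which one extracts both the convergence in Proposition \ref{inKLconv} and the first inversion formula.

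For the first identity $(h^\#)^\flat(g) = h(g)$, I would argue as follows: given $H = h^\#$ with $h$ in whatever class makes $h^\#$ land in $\mathcal{C}_\eta$ (the paper's running assumption, via Definition \ref{defwhittrans}), Proposition \ref{inKLconv} guarantees the defining integral for $(h^\#)^\flat$ converges absolutely and decays like $\min\{y,y^{-1}\}^\eta$. Then one substitutes the definition of $h^\#(\mu)$ into \eqref{invers}, justifies interchanging the $y$- and $\mu$-integrals by the absolute convergence and decay bounds, and is left with $h$ convolved against the kernel $\frac{1}{4\pi i}\int_{(0)} W_\mu(y)W_\mu(y') \, d\mu/|\Gamma(\mu)|^2$, which by the Lebedev inversion theorem is the Dirac mass at $y = y'$ (with the correct total mass because of the normalization check above). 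This yields $(h^\#)^\flat = h$.

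For the second identity $(H^\flat)^\# = H$, the cleanest route is to combine the first identity with a density/uniqueness argument rather than redo the Fubini computation in the opposite order. Given $H \in \mathcal{C}_\eta$, set $h := H^\flat$; Proposition \ref{inKLconv} gives $h \ll \min\{y,y^{-1}\}^\eta$, which is more than enough decay at $0$ and $\infty$ for $h^\# = (H^\flat)^\#$ to be defined and holomorphic in $\mu$ on a strip. Now $(h^\#)^\flat = h = H^\flat$ by the first part, i.e. $H$ and $h^\#$ have the same image under $\flat$. Since $\flat$ is injective on $\mathcal{C}_\eta$ --- this is exactly the statement that the kernel $W_\mu(y)$, $\mu \in i\R$, separates points, again a consequence of the completeness in the Lebedev/Mehler--Fock theory --- one concludes $h^\# = H$, which is the claim. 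Alternatively one repeats the direct Fubini argument with the roles of the variables swapped, using the dual completeness relation $\frac{1}{4\pi i}\int_0^\infty W_\mu(y)W_{\mu'}(y)\,dy/y^2 = $ (delta in $\mu,\mu'$ along $i\R$, suitably interpreted).

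The main obstacle is purely a matter of bookkeeping: verifying that $h^\#$ (resp.\ $H^\flat$) has enough decay and analyticity to legitimize the interchange of integration and to land back in the class where the other transform applies, and confirming that the multiplicative constants in \eqref{gl2wh}, \eqref{eqn whitranseq}, \eqref{invers} are synchronized so that the reconstruction is exact with no stray factor. Both of these are handled by the decay estimate \eqref{decbdd} of Proposition \ref{inKLconv} together with the cited Lemma 2.10 of \cite{Mo97}; there is no genuine analytic difficulty beyond invoking the classical Kontorovich--Lebedev inversion theorem with the paper's normalization.
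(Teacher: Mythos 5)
Your proposal is correct and follows essentially the same route as the paper, which simply cites Lemma 2.10 of Motohashi's book \cite{Mo97} -- that lemma is precisely the classical Kontorovich--Lebedev inversion with the normalization of \eqref{gl2wh}, \eqref{eqn whitranseq} and \eqref{invers}, together with the decay estimate \eqref{decbdd} that justifies the interchanges of integration. Your bookkeeping of the constants and the use of the decay bound from Proposition \ref{inKLconv} match what the cited source provides, so there is nothing further to add.
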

	
	\begin{proof}
		See Lemma 2.10 of \cite{Mo97}.
	\end{proof}


\subsection{Automorphic Forms of $GL(2)$}\label{autformgl2}
Let 
 \begin{align*}
 	\mathfrak{h}^2 \ := \ \left\{ \, g \ = \  \begin{pmatrix}
 		1 & u\\
 		& 1
 	\end{pmatrix} \begin{pmatrix}
 		y & \\
 		& 1
 	\end{pmatrix} : u \in \R, \ y >  0\,  \right\}
 \end{align*}
 with its invariant measure  given by $dg:=y^{-2} du \, dy$. We use  \ $\langle \ \cdot \ ,\  \cdot \ \rangle$ \ to denote the Petersson inner product on $\Gamma_{2}\setminus \mathfrak{h}^{2}$, i.e., 
 \begin{align*}
 	\left\langle \ \phi_{1} \, ,\,  \phi_{2} \ \right\rangle \ \ := \ \  \int_{\Gamma_{2}\setminus \mathfrak{h}^{2}} \ \phi_{1}(g) \, \ovA{\phi_{2}(g)} \ dg.
 \end{align*}

Let $\Delta:= -y^2\left( \partial_{x}^2 +\partial_{y}^2\right)$. An automorphic form  $\phi:  \mathfrak{h}^2 \rightarrow \C$ of $\Gamma_{2}=SL_{2}(\Z)$ satisfies  $\Delta\phi \,= \, \left( \frac{1}{4} -\mu^2\right) \phi$ for some $\mu= \mu(\phi) \in \C$. We often identify $\mu$ with the pair $(\mu, -\mu)\in \mathfrak{a}_{\C}^{(2)}$. For $a\in \Z-\{0 \}$, the $a$-th Fourier coefficient of $\phi$, denoted by $\mathcal{B}_{\phi}(a)$, is defined  by
\begin{equation}\label{eiscuspcoeff}
(\widehat{\phi})_{a}(y) \ := \  \int_{0}^{1} \ \phi\left[ \begin{pmatrix}
1 & u\\
& 1
\end{pmatrix} \begin{pmatrix}
y & \\
& 1
\end{pmatrix}\right] e(-au) \ du \ = \  \frac{\mathcal{B}_{\phi}(a)}{\sqrt{|a|}} \,  W_{\mu(\phi)}(|a|y) \hspace{15pt} (y\ > \ 0).
\end{equation}

Let   $I_{\mu}(y):= y^{\mu+\frac{1}{2}}$. The Eisenstein series of $\Gamma_{2}$ is given by
		\begin{align}\label{gl2eins}
		E(z; \mu) \ := \  \frac{1}{2} \  \sum_{\gamma\in U_{2}(\Z)\setminus\Gamma_{2}} I_{\mu}(\im \gamma z) \hspace{20pt} (z \ \in \  \mathfrak{h}^2).
		\end{align}
	It is well-known that the series (\ref{gl2eins}) converges absolutely for  $\re \mu>1/2$  and admits a meromorphic continuation to the whole complex plane.  Also, we have $\Delta E(\, *\, ;  \mu) \,  = \,  \left( \frac{1}{4}-\mu^2\right) E(\, *\, ;  \mu)$ and  the Fourier coefficients  of $E(*;\mu)$ are given by  
		\begin{align}\label{eisfournorm}
		\mathcal{B}(a;\mu) \ = \  \mathcal{B}_{E(\, * \, ; \, \mu)}(a)    \ = \  \frac{|a|^{\mu}\sigma_{-2\mu}(|a|)}{\Lambda(1+2\mu)}, 
		\end{align}	  
		where
		\begin{align*}
			\Lambda(s) \ := \  \pi^{-s/2}\, \Gamma(s/2)\zeta(s) \hspace{15pt} \text{ and } \hspace{15pt}  \sigma_{-2\mu}(|a|) \ := \  \sum_{d \mid a} d^{-2\mu}.  
		\end{align*}
		 We also write $E^{*}(*;\mu):=\Lambda(1+2\mu)\, E(*;\mu)$\, for the complete Eisenstein series of $\Gamma_{2}$  and    $\mathcal{B}_{E^{^*}(\, * \, ; \, \mu)}(a)$ for its Fourier coefficients.

\subsection{Automorphic Forms of $GL(3)$}\label{autformgl3}

Next, let  
\begin{align*}
	\mathfrak{h}^{3} \ := \  \left\{  \begin{pmatrix}
		1 & u_{1,2} & u_{1,3} \\
		&     1      & u_{2,3} \\
		&             & 1
	\end{pmatrix} 
	\begin{pmatrix}
		y_{0}y_{1} &          &       \\
		& y_{0} &       \\
		&          &  1
	\end{pmatrix}: u_{i,j} \in \R, \ y_{k} >0 \right\}.  
	\end{align*}
 Let $\Phi: \mathfrak{h}^3 \rightarrow \C$ be a Maass cusp form of $\Gamma_{3}$ as defined in Definition 5.1.3 of \cite{Go15}. In particular,  there exists  $\alpha= \alpha(\Phi)\in \mathfrak{a}_{\C}^{(3)}$ such that  for any $D\in Z(U\mathfrak{gl}_{3}(\C))$  (i.e., the center of the universal enveloping algebra of the Lie algebra $\mathfrak{gl}_{3}(\C)$),  we have
\begin{align*}
D\Phi \ = \  \lambda_{D} \Phi \hspace{20pt} \text{ and } \hspace{20pt} 
DI_{\alpha} \ = \ \lambda_{D} I_{\alpha}
\end{align*}
for some $\lambda_{D}\in \C$. The triple $\alpha(\Phi)$ is said to be  the \textit{Langlands parameters} of $\Phi$. 

\begin{defi}\label{fourcoeff}
	Let $m=(m_{1}, m_{2})\in (\Z-\{0\})^{2}$ and $\Phi:  \mathfrak{h}^3 \rightarrow \C$ be a Maass cusp form of $SL_{3}(\Z)$. 	The integral defined by
	\begin{align}\label{fourcoeff2}
	\hspace{10pt} (\widehat{\Phi})_{(m_{1}, m_{2})}(g)\ &:= \ \int_{0}^{1} \int_{0}^{1}\int_{0}^{1} \Phi\left[ \begin{pmatrix}
	1 & u_{1,2} & u_{1,3} \\
	&     1      & u_{2,3} \\
	&             & 1
	\end{pmatrix} 
	g\right]   e\left(-m_{1} u_{2,3}-m_{2} u_{1,2} \right) \ du_{1,2} \ du_{1,3} \ du_{2,3},
	\end{align}
for $g\in GL_{3}(\R)$,  is said to be the  $(m_{1}, m_{2})$-th \textbf{Fourier-Whittaker period} of $\Phi$. 
Moreover,  the $(m_{1}, m_{2})$-th \textbf{Fourier coefficient} of $\Phi$ is the complex number $\mathcal{B}_{\Phi}(m_{1}, m_{2})$  for which
	\begin{align}\label{fourmulone}
		(\widehat{\Phi})_{(m_{1}, m_{2})}	\begin{pmatrix}
		y_{0}y_{1} &          &       \\
		& y_{0} &       \\
		&          &  1
		\end{pmatrix} \ = \  \frac{\mathcal{B}_{\Phi}(m_{1}, m_{2})}{|m_{1}m_{2}|} \  W_{\alpha(\Phi)}^{^{\sgn(m_{2})}}\begin{pmatrix}
		(|m_{1}|y_{0})(|m_{2}|y_{1}) &          &       \\
		& |m_{1}|y_{0} &       \\
		&          &  1
		\end{pmatrix}
	 \end{align}
     holds for any  $y_{0}, y_{1}>0$.   
\end{defi}

\begin{rem}\label{heckeigdef}
	\
	\begin{enumerate}
		\item The archimedean multiplicity-one theorem of Shalika  guarantees  the well-definedness of the Fourier coefficients for $\Phi$.  For a proof, see  Theorem 6.1.6 of \cite{Go15}. 
		
		\item We shall make use of the following simple observation frequently:
		\begin{align}\label{unipotran}
			(\widehat{\Phi})_{(m_{1}, m_{2})}\left[ \begin{pmatrix}
				1 & u_{1,2} & u_{1,3} \\
				&     1      & u_{2,3} \\
				&             & 1
			\end{pmatrix} 
			g\right] \  \ = \ \  e\left(m_{1}u_{2,3}+m_{2}u_{1,2}\right) \cdot 	(\widehat{\Phi})_{(m_{1}, m_{2})}(g)
		\end{align}
		for any $g\in GL_{3}(\R)$ and  $u_{i,j}\in \R$ \ ($1\le i<j\le 3$). 
		
	\end{enumerate}

\end{rem}

 If $\Phi$ is Hecke-normalized (part of the convention of this article, see Section \ref{notconv}.(\ref{hecke})), then $\mathcal{B}_{\Phi}(1,n)$ can be shown to be a Hecke eigenvalue of $\Phi$ and thus is a multiplicative function. In this case, we  write $\lambda_{\Phi}(n):= \mathcal{B}_{\Phi}(1,n)$.  Furthermore, we have
\begin{align}\label{splGL3Hec}
	\mathcal{B}_{\Phi}(a, a_{1}) \ = \  \sum\limits_{r \mid (a, a_{1})} \ \mu(r) \overline{\lambda_{\Phi}}(a/r) \lambda_{\Phi}(a_{1}/r)
\end{align}
for any $a, a_{1}\in \Z-\{0\}$, where $\mu(\cdot)$ denotes the M\"obius $\mu$-function.  See  Section 6.4 of \cite{Go15} for  details.


\subsection{Automorphic $L$-functions}\label{autoLfunc}
The Maass cusp forms $\Phi$ and  $\phi$  of  $\Gamma_{3}$ and $\Gamma_{2}$  are assumed to be Hecke-normalized.  Denote their Langlands parameters  by    $\alpha=\alpha(\Phi)\in \mathfrak{a}_{\C}^{(3)}$ and $\mu=\mu(\phi) \in \mathfrak{a}_{\C}^{(2)}$  respectively.   If $\widetilde{\Phi}(g):= \Phi\left( ^{t}g^{-1}\right)$ is the dual form of $\Phi$, then the Langlands parameters of $\widetilde{\Phi}$ are given by $-\alpha$.

	\begin{defi}\label{DScuspRS}
		Suppose $\Phi$  (resp. $\phi$) is a  Maass cusp form or an Eisenstein series of   $\Gamma_{3}$ (resp. $\Gamma_{2}$).  For $\re s \gg 1$, the Rankin-Selberg $L$-function of $\Phi$ and $\phi$ is defined by
	\begin{align}\label{rankse}
		L\left(s, \phi\otimes \Phi\right) \ &:= \    \sum_{m_{1}=1}^{\infty} \ \sum_{m_{2}= 1}^{\infty}  \frac{\mathcal{B}_{\phi}(m_{2})  \mathcal{B}_{\Phi}(m_{1}, m_{2} )}{ \left(m_{1}^2 m_{2}\right)^{s}}. 
	\end{align}
\end{defi}

		\begin{prop}\label{ranselmainthm}
		Suppose $\Phi$ and $\phi$ are  Maass cusp forms  of   $\Gamma_{3}$ and  $\Gamma_{2}$  respectively.  In addition, assume that $\phi$ is  even.  Then for	any $\re s\gg 1$, we have
		\begin{align}\label{sameparun}
		\left(\phi,\  \mathbb{P}_{2}^{3}\Phi\cdot |\det *|^{\overline{s}-\frac{1}{2}}\right) _{\Gamma_{2}\setminus GL_{2}(\R)} \ \ := \  \  \int_{\Gamma_{2}\setminus GL_{2}(\R)}  \ \phi(g) \widetilde{\Phi}\begin{pmatrix}
				g & \\
				& 1
			\end{pmatrix} |\det g|^{s-\frac{1}{2}} \ dg  \ = \ 	\frac{1}{2} \  \Lambda(s, \phi\otimes \widetilde{\Phi}),
		\end{align}
	where
		\begin{align}
			\Lambda(s, \phi\otimes \widetilde{\Phi}) \ &:= \ L_{\infty}(s, \phi\otimes \widetilde{\Phi})\, L(s, \phi\otimes \widetilde{\Phi})
		\end{align}
	and 
		\begin{align}
				L_{\infty}(s, \phi\otimes \widetilde{\Phi}) \ &:= \  \prod_{k=1}^{3} \ \Gamma_{\R}\left(s\pm  \mu- \alpha_{k}\right). 
		\end{align}

	\end{prop}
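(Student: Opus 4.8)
The plan is to prove Proposition \ref{ranselmainthm} by unfolding the period integral against the Fourier expansion of the $GL(3)$ form, following the classical Rankin--Selberg method of Jacquet--Piatetski-Shapiro--Shalika adapted to the adelic-free setting of \cite{Go15}. First I would use the Iwasawa decomposition $GL_{2}(\R) = \Gamma_{2}\backslash GL_{2}(\R)$ together with the fact that $\phi$ is a cusp form (hence of rapid decay in the cusp) and $\Phi$ is cuspidal, so the integral in \eqref{sameparun} converges absolutely for $\re s \gg 1$ and we may replace the sum over $\Gamma_{2}$ defining the automorphic structure by an unfolding against the standard parabolic. Concretely, one writes the integral over $\Gamma_{2}\backslash \mathfrak{h}^{2}$ (after using $|\det|$-invariance to reduce $GL_{2}(\R)$ to $\mathfrak{h}^{2}$, absorbing the center) and expands $\phi$ into its Fourier--Whittaker expansion $\phi(g) = \sum_{a\neq 0} \frac{\mathcal{B}_{\phi}(a)}{\sqrt{|a|}} W_{\mu(\phi)}(|a|y)e(au)$ via \eqref{eiscuspcoeff}, while simultaneously using the Fourier expansion of $\widetilde{\Phi}\begin{psmallmatrix} g & \\ & 1\end{psmallmatrix}$ restricted to the mirabolic, which by the $GL(3)$ theory (Definition \ref{fourcoeff}, equations \eqref{fourmulone} and \eqref{splGL3Hec}) collapses, after integrating over the unipotent variable $u$, to a single sum indexed by $(m_{1}, m_{2})$ with the $GL(3)$ Whittaker function $W^{\sgn}_{\alpha}$ in the argument $\begin{psmallmatrix} |m_{1}|y_{0}\,|m_{2}|y_{1} & & \\ & |m_{1}| y_{0} & \\ & & 1\end{psmallmatrix}$.

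The key combinatorial step is that the $u$-integration forces the Fourier frequency of $\phi$ to match that coming from the $GL(3)$ expansion (the $m_{2}$-variable), so the double Fourier expansion telescopes: one is left with a single Dirichlet series $\sum_{m_{1}\ge 1}\sum_{m_{2}\ge 1} \mathcal{B}_{\phi}(m_{2})\mathcal{B}_{\Phi}(m_{1},m_{2})(m_{1}^{2}m_{2})^{-s}$ — which is precisely $L(s,\phi\otimes\widetilde{\Phi})$ as in Definition \ref{DScuspRS} (using that for the dual form $\mathcal{B}_{\widetilde{\Phi}}(m_{1},m_{2}) = \mathcal{B}_{\Phi}(m_{2},m_{1})$ and that $\phi$ even means $\mathcal{B}_{\phi}(-a)=\mathcal{B}_{\phi}(a)$, which accounts for the factor combining the $\pm$ contributions and fixing the sign character) — times an archimedean integral over $(y_{0},y_{1})\in (0,\infty)^{2}$ of $W_{\mu}(y_{1})\,W_{-\alpha}\begin{psmallmatrix} y_{0}y_{1} & & \\ & y_{0} & \\ & & 1 \end{psmallmatrix}(y_{0}^{2}y_{1})^{s-1/2}$ against the invariant measure. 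I would then invoke Proposition \ref{stadediff1} verbatim to evaluate this archimedean integral as $\tfrac14\prod_{\pm}\prod_{k=1}^{3}\Gamma_{\R}(s\pm\mu-\alpha_{k}) = \tfrac14 L_{\infty}(s,\phi\otimes\widetilde\Phi)$, and bookkeeping of the constants (the $\tfrac12$ in the Eisenstein-style normalization of $E$, the $\tfrac14$ from the Whittaker integral, and the factor $2$ from combining $m_{2}>0$ and $m_{2}<0$ via evenness) yields the clean $\tfrac12\Lambda(s,\phi\otimes\widetilde\Phi)$.

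The main obstacle I anticipate is not conceptual but organizational: carefully tracking the normalization of measures (the factor $y_{0}^{-1}y_{1}^{-2}\,dy_{0}\,dy_{1}$ coming from $dg$ on $GL_{2}(\R)$ versus $\mathfrak{h}^{3}$), the precise placement of the exponent shift $s-\tfrac12$ versus $\overline{s}-\tfrac12$ (which is harmless since we may assume $s$ real in the region of absolute convergence and then continue), and the sign/parity issues forcing $W^{\sgn(m_{2})}_{\alpha}$ to reduce consistently to $W_{-\alpha}$ of the absolute values — here the hypothesis that $\phi$ is even is exactly what makes the two half-sums equal and collapses the outer $\pm$. I would handle convergence by standard estimates: cuspidality of both $\phi$ and $\Phi$ gives super-polynomial decay of the Whittaker functions as $y_{0},y_{1}\to\infty$ and the gamma-function decay controls the $y\to 0$ regime, so the interchange of summation and integration is justified for $\re s\gg 1$; the meromorphic continuation to all $s$ (and in particular to the strip in Theorem \ref{twisdirc}) then comes for free from the known analytic continuation of $L(s,\phi\otimes\widetilde\Phi)$ and the gamma factors, though for the statement as written one only needs $\re s \gg 1$.
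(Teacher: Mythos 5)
Your proposal is correct and follows essentially the same route as the paper, which simply cites \cite{Go15} (and remarks in \cite{Kw23}) for exactly this standard Rankin--Selberg unfolding: expand $\widetilde{\Phi}$ restricted to the mirabolic via its Whittaker expansion, unfold the $\Gamma_{2}$-quotient, match Fourier frequencies with $\phi$, and evaluate the archimedean $(y_{0},y_{1})$-integral by Proposition \ref{stadediff1}, with the evenness of $\phi$ combining the two signs of $m_{2}$ to turn the factor $\tfrac14$ into $\tfrac12$. Your bookkeeping of the measure, the exponent $s-\tfrac12$, and the constants matches the paper's normalization.
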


	\begin{proof}
 See \cite{Go15} and the relevant remarks in  \cite{Kw23}. 
	\end{proof}

	\begin{defi}\label{JPSSstdL}
		Let $\Phi: \mathfrak{h}^{3}\rightarrow \C$ be a Maass cusp form or Eisenstein series of $\Gamma_{3}$. For $\re s  \gg  1$,  the standard $L$-function of $\Phi$ is defined by
		\begin{align}\label{gl3stdL}
			L\left(s, \Phi \right)\ := \ \sum_{n=1}^{\infty} \ \frac{\mathcal{B}_{\Phi}(1,n)}{n^{s}}.   
		\end{align}
	\end{defi}

The standard $L$-function  admits an entire continuation and  satisfies a  functional equation: 
	
\begin{prop}\label{globgl3func}
	Let $\Phi: \mathfrak{h}^{3}\rightarrow \C$ be a Maass cusp form of $\Gamma_{3}$. For any $s\in \C$, we have
		\begin{align}\label{JPfunc}
		\Lambda\left(s, \Phi\right) \ = \ \Lambda(1-s,  \widetilde{\Phi}),
	\end{align}
	where 
	\begin{align}
	\Lambda\left(s, \Phi\right) \ := \ 	L_{\infty}\left(s, \Phi \right) \, L\left(s, \Phi\right)
	\end{align}
and
	\begin{align}
L_{\infty}\left(s, \Phi \right) \ := \  \prod_{k=1}^{3}  \ \Gamma_{\R}\left(s+\alpha_{k} \right). 
\end{align}

\end{prop}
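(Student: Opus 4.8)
The plan is to establish the functional equation for the standard $L$-function $L(s,\Phi)$ of a Maass cusp form of $SL_3(\Z)$ by realizing $\Lambda(s,\Phi)$ as a Mellin transform of a Fourier--Whittaker period of $\Phi$ along the diagonal torus, and then exploiting the invariance of $\Phi$ under the Weyl element (or, equivalently, the relation $\widetilde{\Phi}(g)=\Phi({}^tg^{-1})$) to flip $s\mapsto 1-s$. Concretely, I would start from the degenerate Fourier coefficient: integrate $\Phi$ over the unipotent $U_3(\Z)\backslash U_3(\R)$ against the character $e(-u_{2,3})$ only (i.e. the $(1,0)$-type period in a suitable normalization, which picks out the $GL_2$-embedded Whittaker data), so that after the standard unfolding one obtains $\int_0^\infty (\widehat{\Phi})_{(1,*)}$ evaluated on $\diag(y,1,1)$-type elements, times $y^{s}\,d^\times y$, equal to a product of $\Gamma_\R$-factors times $L(s,\Phi)$. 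This is the Jacquet--Piatetski-Shapiro--Shalika integral representation; on the archimedean side the local computation is exactly of the shape of Proposition \ref{stadediff1} (with the $GL_2$ Whittaker function degenerated), yielding $L_\infty(s,\Phi)=\prod_{k=1}^3\Gamma_\R(s+\alpha_k)$.

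The key steps, in order, would be: (1) write down the integral $Z(s,\Phi):=\int_0^\infty (\widehat{\Phi})_{(1,0)}\!\big(\!\begin{psmallmatrix} y&&\\&1&\\&&1\end{psmallmatrix}\!\big)\, y^{s-1}\,\frac{dy}{y}$ (with the precise embedding and measure normalized as in Chapter 6 of \cite{Go15}), and expand $(\widehat{\Phi})_{(1,0)}$ into its Fourier--Whittaker series over the remaining rational unipotent to get $\sum_n \mathcal{B}_\Phi(1,n)n^{-s}$ up to an explicit $\Gamma$-factor coming from $\int_0^\infty W_\alpha(\,\cdot\,)y^{s}d^\times y$; (2) identify this $\Gamma$-factor as $L_\infty(s,\Phi)=\prod_k\Gamma_\R(s+\alpha_k)$ via the Jacquet-integral formula (\ref{jac}) and the known Mellin transform of the $GL_3$ Whittaker function (Stade's formula), so that $Z(s,\Phi)=c\cdot\Lambda(s,\Phi)$ for an explicit constant $c$; (3) prove that $Z(s,\Phi)$ continues to an entire function of $s$ of finite order in vertical strips — this uses the rapid decay of $\Phi$ in the cusp (cuspidality kills the constant term, so there are no polar contributions, unlike the Eisenstein case) together with the moderate growth bound, splitting the $y$-integral at $y=1$ and applying the decay $\Phi\ll_N y^{-N}$ in both regimes; (4) apply the substitution $y\mapsto 1/y$ combined with the identity $\Phi\!\big(\!\begin{psmallmatrix}&&1\\&-1&\\1&&\end{psmallmatrix}g\big)$-invariance (left $SL_3(\Z)$-invariance through the long Weyl element), which converts the integral for $\Phi$ into the corresponding integral for $\widetilde\Phi$ with $s$ replaced by $1-s$, giving $Z(s,\Phi)=Z(1-s,\widetilde\Phi)$ and hence (\ref{JPfunc}).

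The main obstacle I anticipate is step (4): making the Weyl-element/transpose-inverse manipulation rigorous at the level of the period integral, i.e. carefully tracking how the diagonal torus coordinates $(y_0,y_1)$ transform, how the character $e(-u_{2,3})$ is exchanged with $e(-u_{1,2})$, and how the measure behaves, so that one genuinely lands on the $(0,1)$-period of $\widetilde\Phi$ rather than a twisted or asymmetric object. Equivalently, one must check that the local archimedean integral is invariant under $s\mapsto 1-s$, $\alpha\mapsto-\alpha$ — this is where Stade's evaluation of the $GL_3$ archimedean Rankin--Selberg/Whittaker integral does the real work, since $\prod_k\Gamma_\R(s+\alpha_k)$ is manifestly sent to $\prod_k\Gamma_\R(1-s-\alpha_k)=L_\infty(1-s,\widetilde\Phi)$. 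A secondary technical point is justifying the interchange of summation and integration in step (1) and the absolute convergence of $Z(s,\Phi)$ for $\re s\gg 1$, which follows from the Rankin--Selberg bound $\sum_{n\le X}|\lambda_\Phi(n)|^2\ll X^{1+\varepsilon}$ (hence $\mathcal{B}_\Phi(1,n)\ll n^{\theta+\varepsilon}$ with $\theta<1/2$ from Section \ref{notconv}). Since all the hard analytic inputs (Stade's formula, cuspidal decay, Hecke bounds) are either standard or quoted from \cite{Go15}, the proof is essentially the classical Godement--Jacquet/JPSS argument specialized to $GL_3$ over $\Q$, and I would simply cite \cite{Go15} (Chapters 6 and 12) for the details while recording the shape of $L_\infty$ and the precise functional equation here.
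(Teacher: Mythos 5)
The paper's own proof of Proposition \ref{globgl3func} is nothing more than a citation to Chapter 6.5 of \cite{Go15}, and your outline is exactly the Hecke/Jacquet--Piatetski-Shapiro--Shalika argument carried out there (Mellin transform of a degenerate unipotent period, unfolding to the Dirichlet series times a Whittaker Gamma-factor, and the Weyl-element/transpose-inverse flip $y\mapsto 1/y$, $\Phi\mapsto\widetilde{\Phi}$), ending with the same citation, so the approaches coincide. One small caveat: the object you denote $(\widehat{\Phi})_{(1,0)}$ must be the \emph{partial} unipotent integral (over $u_{1,3},u_{2,3}$ only, against $e(-u_{2,3})$), since the full degenerate coefficient in the sense of Definition \ref{fourcoeff} contains the constant term along a maximal parabolic and hence vanishes identically for a cusp form --- this is consistent with your own subsequent step of expanding ``over the remaining rational unipotent,'' but the notation as written would literally give $Z(s,\Phi)\equiv 0$.
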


	\begin{proof}
		See Chapter 6.5 of \cite{Go15}.
	\end{proof}

Furthermore,  the standard $L$-function $L(s, \phi)$ (resp. $L(s, \Phi)$) for  $\phi$ (resp. $\Phi$), which is either a Hecke-normalized Maass cusp form or a complete Eisenstein series of $\Gamma_{2}$ (resp.  $\Gamma_{3}$),  admits an Euler product of the form
\begin{align}\label{23Eul}
L(s, \phi) \ = \  \prod_{p} \ \prod_{j=1}^{2} \ \left(1-\beta_{\phi, j}(p)p^{-s}\right)^{-1}, 
\end{align}
resp. 
\begin{align}\label{3EUl}
	L(s, \Phi) \ = \  \prod_{p} \ \prod_{k=1}^{3} \ \left(1-\alpha_{\Phi, k}(p)p^{-s}\right)^{-1}
\end{align}
for $\re s \gg 1$. One can show that
\begin{align}\label{RSEul}
L(s, \phi \otimes \Phi) \ \ = \  \   \prod_{p} \ \prod_{j=1}^{2}  \ \prod_{k=1}^{3} \  \left(1- \beta_{\phi, j}(p)\alpha_{\Phi, k}(p)p^{-s}\right)^{-1},
\end{align}
see  Proposition 7.4.12 of \cite{Go15}.   If $\phi= E^{*}(*;\mu)$, then   (\ref{23Eul}) holds for $\left\{ \beta_{\phi,1}(p), \beta_{\phi,2}(p)  \right\} $ $ =  \{ p^{\mu}, p^{-\mu}  \}$ and  (\ref{23Eul})-(\ref{RSEul}) imply the Dirichlet series  $L(s, \phi\otimes \Phi)$ is given by the product of $L$-functions  $L(s+\mu, \Phi)L(s-\mu, \Phi)$.

	\begin{prop}\label{ranseleis}
	Suppose $\Phi$ is a Hecke-Maass cusp form of $\Gamma_{3}$. For any $s, \mu \in \C$, we have
	\begin{align}\label{samepareis}
		\hspace{-23pt}	\left(\, E^{*}(\, * \, ; \,  \mu)\,  , \, \left( \mathbb{P}_{2}^{3} \Phi\right) \cdot |\det *|^{\bar{s}-\frac{1}{2}}\right)_{\Gamma_{2}\setminus GL_{2}(\R)} 
		\ \ = \ \ \frac{1}{2}   \ \Lambda( s+ \mu,  \widetilde{\Phi})\Lambda( s- \mu,  \widetilde{\Phi}).
	\end{align}
	\end{prop}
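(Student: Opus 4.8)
The plan is to reduce Proposition~\ref{ranseleis} to the cuspidal Rankin--Selberg computation of Proposition~\ref{ranselmainthm} by unfolding the Eisenstein series, exactly as one does in the classical Rankin--Selberg method. First I would substitute the definition (\ref{gl2eins}) of $E^*(\,*\,;\mu)=\Lambda(1+2\mu)\,E(\,*\,;\mu)$ into the period integral on the left of (\ref{samepareis}). Since $\Phi$ is cuspidal, the restriction $\mathbb{P}_2^3\Phi$ has sufficient decay to justify (for $\re s\gg 1$ and $\re\mu$ in the range of absolute convergence of the Eisenstein series, say $\re\mu>1/2$) interchanging the sum over $\gamma\in U_2(\Z)\backslash\Gamma_2$ with the integral over $\Gamma_2\backslash GL_2(\R)$. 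Unfolding collapses $\Gamma_2\backslash GL_2(\R)$ against the coset sum to an integral over $U_2(\Z)\backslash GL_2(\R)$, and then the $I_\mu(\im g)=y^{\mu+1/2}$ factor together with the Iwasawa coordinates turns the $u$-integral into the $0$-th Fourier coefficient extraction along the unipotent. What remains is
\begin{align*}
\Lambda(1+2\mu)\int_{0}^{\infty}\!\!\!\int_{0}^{\infty} (\widehat{\mathbb{P}_2^3\Phi})_{\text{const in }u}\!\begin{pmatrix} y_0y_1 & & \\ & y_0 & \\ & & 1\end{pmatrix}\,y_1^{\mu+\frac{1}{2}}(y_0^2y_1)^{s-\frac{1}{2}}\,\frac{dy_0\,dy_1}{y_0y_1^2},
\end{align*}
which by the $GL(3)$ Fourier expansion of $\widetilde{\Phi}$ (the $m_1$-sum from (\ref{fourmulone}), with the $m_2=0$ degeneration handled as in the classical theory) becomes a sum over $m_1\ge 1$ of $\overline{\lambda_\Phi}(m_1)\,m_1^{-\cdots}$ against the archimedean integral of the $GL(3)$ Whittaker function evaluated in Proposition~\ref{stadediff1}.

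Next I would recognize the resulting Dirichlet series: the $m_1$-sum produces $L(s\pm\mu,\widetilde\Phi)$ up to the factor $\Lambda(1+2\mu)$, and here one uses the remark after (\ref{RSEul}) that for $\phi=E^*(\,*\,;\mu)$ one has $\{\beta_{\phi,1}(p),\beta_{\phi,2}(p)\}=\{p^\mu,p^{-\mu}\}$, so that $L(s,\phi\otimes\widetilde\Phi)=L(s+\mu,\widetilde\Phi)L(s-\mu,\widetilde\Phi)$. Combining the archimedean Gamma factors from Proposition~\ref{stadediff1} (which is precisely $\frac14\prod_{\pm}\prod_{k=1}^3\Gamma_\R(s\pm\mu-\alpha_k)=\frac14 L_\infty(s+\mu,\widetilde\Phi)L_\infty(s-\mu,\widetilde\Phi)$ after matching the shift conventions) with the finite parts assembles $\frac12\Lambda(s+\mu,\widetilde\Phi)\Lambda(s-\mu,\widetilde\Phi)$. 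Alternatively, and perhaps more cleanly, one can bypass re-doing the unfolding entirely: Proposition~\ref{ranselmainthm} already gives $(\phi,\mathbb{P}_2^3\Phi\cdot|\det*|^{\bar s-1/2})=\tfrac12\Lambda(s,\phi\otimes\widetilde\Phi)$ for $\phi$ a cusp form, and the same unfolding identity holds verbatim with $\phi=E^*(\,*\,;\mu)$ in place of a cusp form because the unfolding step only used the Fourier expansion of $\Phi$, not of $\phi$; then one substitutes $\Lambda(s,E^*(\,*\,;\mu)\otimes\widetilde\Phi)=\Lambda(s+\mu,\widetilde\Phi)\Lambda(s-\mu,\widetilde\Phi)$.

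Finally, I would address convergence and analytic continuation: the identity is first proved for $\re s$ large and $\re\mu>1/2$, and then both sides are meromorphic (indeed the right side is entire in $s$ for fixed generic $\mu$, and the apparent pole of $E^*$ at $\mu=0$ is compensated since $\Lambda(1+2\mu)$ was folded in), so the equality extends to all $s,\mu\in\C$ by analytic continuation, matching the stated generality.

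The main obstacle I anticipate is the careful treatment of the \emph{degenerate term} in the $GL(3)$ Fourier expansion when unfolding against the $GL(2)$ Eisenstein series: unlike the cuspidal case of Proposition~\ref{ranselmainthm} where $\mathbb{P}_2^3\Phi$ is rapidly decaying and cuspidal, here one must be sure that the only surviving contribution after the $u$-integration is the one indexed by $(m_1,0)$-type Fourier--Whittaker data along the correct unipotent, and that the relevant $GL(3)$ Whittaker integral is exactly the one normalized in Proposition~\ref{stadediff1} (with $W_{-\alpha}$, i.e.\ the dual parameters, accounting for the switch $\Phi\rightsquigarrow\widetilde\Phi$). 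Getting the shift bookkeeping between $I_\mu(y)=y^{\mu+1/2}$, the $(y_0^2y_1)^{s-1/2}$ weight, and the two shifts $s\pm\mu$ to land consistently is the delicate point; everything else is a routine application of the results already recorded above.
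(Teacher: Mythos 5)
Your proposal is correct, and the ``alternatively'' route you describe --- running the unfolding of Proposition~\ref{ranselmainthm} verbatim with $\phi=E^{*}(\,*\,;\mu)$, using that the unfolding only needs the Fourier expansion of the $GL(3)$ form, and then factoring $L(s,E^{*}(\,*\,;\mu)\otimes\widetilde{\Phi})=L(s+\mu,\widetilde{\Phi})L(s-\mu,\widetilde{\Phi})$ via (\ref{eisfournorm}) and the remark after (\ref{RSEul}) --- is exactly what the paper means by ``parallel to Proposition~\ref{ranselmainthm}.'' Your first route (unfolding the Eisenstein series itself), with its degenerate-term bookkeeping, is unnecessary and is not the path the paper takes.
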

	
	\begin{proof}
		Parallel to Proposition \ref{ranselmainthm}. 
		\end{proof}

We will also need the basic analytic properties of certain `generalized' zeta-functions and automorphic $L$-functions which are stated as follows. 

 \begin{prop}\label{simpvor}
	\
	\begin{enumerate}
		\item 	For $\re s>1$  and  $0<a\le 1$,  the Hurwitz $\zeta$-function, which is defined by 
		\begin{align}\label{hurw}
			\zeta(s, a) \ := \ \sum_{n=0}^{\infty} \, (n+a)^{-s}, 
		\end{align}
		admits a holomorphic continuation to $\C$ except at $s=1$. It has  a simple pole at $s=1$ and the residue is equal to  $1$. Moreover, it has polynomial growth in every vertical strip.

		\item Let $\Phi$ be a Maass cusp form of $\Gamma_{3}$. For $\re s>1+\theta$ and $a/c\in \Q$, the additively-twisted $L$-series of $\Phi$ by $a/c$, which is defined by
		\begin{align}\label{voro}
			L\left( s, \frac{a}{c}; \Phi\right) \ := \  \sum_{\substack{n=1} }^{\infty} \    \  \frac{\mathcal{B}_{\Phi}(1, n)}{ n^{s}} \,   e\left( \frac{na}{c}\right),
		\end{align}
		admits an entire continuation and it has polynomial growth in every vertical strip. 
	\end{enumerate}
\end{prop}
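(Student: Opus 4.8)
The plan is to establish part (1) by the classical Hermite/Abel–Plana style argument, and part (2) by relating $L(s, a/c; \Phi)$ to the completed standard $L$-functions of the additive twists of $\Phi$ (equivalently, to $GL(3)$ Voronoi summation), for which holomorphic continuation and polynomial growth are already built into the theory cited in Proposition \ref{globgl3func} and Chapter 6.5 of \cite{Go15}.

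For part (1), I would start from the Dirichlet-series expression $\zeta(s,a) = \sum_{n\ge 0}(n+a)^{-s}$, valid for $\re s>1$, and apply Euler–Maclaurin (or the Hermite integral formula)
\begin{align*}
\zeta(s,a) \ = \ \frac{a^{-s}}{2} \ + \ \frac{a^{1-s}}{s-1} \ + \ 2\int_{0}^{\infty} \frac{\sin\!\left(s\arctan(t/a)\right)}{(a^2+t^2)^{s/2}\,\left(e^{2\pi t}-1\right)} \ dt,
\end{align*}
the last integral converging locally uniformly for all $s\in\C$ (and all $a$ with $\re a>0$, then extended in $a$). This exhibits the meromorphic continuation, shows the only pole is the simple pole at $s=1$ coming from the term $a^{1-s}/(s-1)$ with residue $1$, and — since the remaining integral is bounded by a polynomial in $|\im s|$ on vertical strips while $a^{-s}$, $a^{1-s}$ are clearly of polynomial growth there — gives the claimed polynomial growth in vertical strips. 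I would simply cite a standard reference (e.g. Titchmarsh, or \cite{Go15}) for these classical facts rather than reproduce the computation.

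For part (2), the key point is that $L(s, a/c; \Phi)$ is, up to elementary factors, a finite combination of standard $L$-functions of $\Phi$ twisted by Dirichlet characters, or more directly the object controlled by the $GL(3)$ Voronoi formula. Writing $e(na/c)$ via additive characters mod $c$ and using multiplicativity of $\mathcal{B}_\Phi(1,n)$ together with (\ref{splGL3Hec}), one expresses $L(s,a/c;\Phi)$ as a finite sum over divisors of $c$ of terms of the form (Dirichlet polynomial in the Satake parameters at primes dividing $c$) times $L(s,\Phi\otimes\psi)$ for Dirichlet characters $\psi$ of modulus dividing $c$; each such twisted $L$-function is entire (as $\Phi$ is cuspidal, hence so is $\Phi\otimes\psi$) and has polynomial growth in vertical strips by its functional equation and Phragmén–Lindelöf, exactly as in Proposition \ref{globgl3func}. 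Absolute convergence of the defining series for $\re s>1+\theta$ follows from the Hecke bound $\mathcal{B}_\Phi(1,n)\ll n^{\theta+\varepsilon}$ with $\theta$ as in Section \ref{notconv}. Alternatively, and perhaps more in the spirit of this paper, one invokes $GL(3)$ Voronoi summation directly (Chapter 6.5 of \cite{Go15}), which provides the analytic continuation and the polynomial-growth bound in a single step.

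The main obstacle is essentially bookkeeping rather than a genuine difficulty: in part (2) one must carefully track the finite Euler factors at primes dividing $c$ that arise when passing from $e(na/c)$ to a sum of complete twisted $L$-functions, and confirm these finite factors are themselves of polynomial growth (they are, being finite Dirichlet polynomials in $p^{\pm s}$ with bounded Satake parameters since $\Phi$ is tempered at $\infty$, Section \ref{notconv}). Part (1) is entirely classical. I would therefore keep both arguments brief, citing \cite{Go15} for the $GL(3)$ input and a standard analytic number theory text for the Hurwitz zeta facts, and devote at most a few lines to the divisor bookkeeping in part (2).
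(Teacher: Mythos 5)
Your proposal is correct and takes essentially the same route as the paper: the paper's proof is just a citation of \cite{Ap76} for part (1) and of the $GL(3)$ Voronoi paper \cite{GL06} for part (2), with a follow-up remark that the polynomial growth of both series can be deduced from that of Dirichlet $L$-functions and the multiplicatively-twisted $L$-functions of $\Phi$ --- which is exactly the decomposition you spell out. The one caveat is that the additive-to-multiplicative conversion at general (non-squarefree) moduli is combinatorially delicate (the paper itself defers such bookkeeping to \cite{DHKL20} in Section \ref{shapedualDiric}), so your alternative of invoking \cite{GL06} directly is the cleaner path and is the one the paper actually takes.
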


\begin{proof}
	See Theorem 12.4 of  \cite{Ap76} and Lemma 3.2 of  \cite{GL06}. 
\end{proof}

	It is well-known that both  (\ref{hurw}) and (\ref{voro}) satisfy some forms of  functional equations, see Theorem 12.6 of \cite{Ap76} and  Theorem 3.1 of \cite{GL06} respectively.   However, we will make no use of such formulae in the proof of Theorem \ref{twisdirc}. The  polynomial growth of  (\ref{hurw}) and (\ref{voro}) in vertical strips can be deduced from that of the Dirichlet $L$-functions and the multiplicatively-twisted $L$-functions of $\Phi$. Moreover, the regularity of our class of test functions  $\mathcal{C}_{\eta}$ is more than sufficient for all of our analytic purposes provided $\eta> 40$.

		
		\subsection{Calculations on the Spectral Side}

	\begin{defi}\label{poindef}
		Let $a\ge 1$ be an integer and $h\in C^{\infty}(0,\infty)$. The Poincar\'e series of $\Gamma_{2}$   is defined by
		\begin{equation}\label{defpoin}
		P^{a}(z; h) \ := \   \sum_{\gamma\in U_{2}(\Z)\setminus \Gamma_{2}} h(a \im \gamma z) \cdot e\left(a \re \gamma z\right) \hspace{20pt} (z \ \in \  \mathfrak{h}^2)
		\end{equation}
	 provided it converges absolutely. 
	\end{defi}
	
If the bounds 
	\begin{align}\label{poincbdd}
	h(y)  \ \ll \  y^{1+\epsilon} \hspace{10pt} \text{(as \ $y\to 0$)} \hspace{10pt} \text{ and }   \hspace{10pt} h(y) \  \ll \   y^{\frac{1}{2}-\epsilon}  \hspace{10pt} \text{ (as \ $y \to\infty$)}
	\end{align}
are satisfied, then the Poincar\'e series $P^{a}(z; h)$ converges absolutely and is an $L^2$-function on $\Gamma_{2}\setminus \mathfrak{h}^2$.  In this article, we  take $h:= H^{\flat}$ with $H\in \mathcal{C}_{\eta}$ and $\eta >40$.   The conditions of (\ref{poincbdd})  clearly hold because of Proposition \ref{inKLconv}.  We often use the shorthand $P^{a}:= P^{a}(*;h)$.

	\begin{prop}\label{comspec}
		Let $\Phi$ be a  Hecke-Maass cusp form  of $\Gamma_{3}$ and $P^{a}$ be a Poincar\'e series of $\Gamma_{2}$. Then for $s \in \C$, we have
		\begin{align}\label{specside}
		 &\hspace{-50pt} 2a^{-1/2}  \, \left(P^a, \ \mathbb{P}_{2}^{3} \Phi\cdot  |\det *|^{\overline{s}-\frac{1}{2}}\right)_{\Gamma_{2}\setminus GL_{2}(\R)}  \nonumber\\
	 \hspace{40pt} &	\ = \    \sum_{j=1}^{\infty} \  h^{\#}\left(\mu_{j}\right)\,  \frac{\ovB{\mathcal{B}_{j}(a)}  \, \Lambda(s, \phi_{j}\otimes \widetilde{\Phi}) }{\langle \phi_{j}, \phi_{j}\rangle} \nonumber\\
	&\hspace{40pt} \ + \   \int_{(0)} \   h^{\#}\left(\mu\right) \, 
	\frac{ \sigma_{-2\mu}(a) a^{-\mu} \Lambda( s+ \mu,   \widetilde{\Phi} )\Lambda( s- \mu,   \widetilde{\Phi})}{\left|\Lambda(1+2\mu)\right|^2} \ \frac{d\mu }{4\pi i },
		\end{align}
	where  the sum is restricted to an orthogonal basis $(\phi_{j})$ of even Hecke-normalized  Maass cusp forms for $\Gamma_{2}$ with $\Delta\phi_{j}= \left( \frac{1}{4}-\mu_{j}^2\right)\phi_{j}$ and $\mathcal{B}_{j}(a):= \mathcal{B}_{\phi_{j}}(a)$. 
		
	\end{prop}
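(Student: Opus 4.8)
The plan is to compute the Petersson inner product
$\left(P^a,\ \mathbb{P}_{2}^{3}\Phi\cdot|\det *|^{\overline{s}-\frac12}\right)_{\Gamma_2\setminus GL_2(\R)}$
in two ways and match the results; the left-hand side of (\ref{specside}) is by definition (up to the harmless factor $2a^{-1/2}$) the object we are spectrally expanding. First I would spectrally decompose the Poincar\'e series $P^a=P^a(*;h)$, which by Proposition \ref{inKLconv} together with the bounds (\ref{poincbdd}) is a genuine $L^2$-function on $\Gamma_2\setminus\mathfrak{h}^2$. The spectral expansion reads
$P^a = \sum_{j} \langle P^a,\phi_j\rangle \frac{\phi_j}{\langle\phi_j,\phi_j\rangle} + \text{(Eisenstein)} + \text{(constant)}$,
where the sum runs over the Hecke--Maass cuspidal basis $(\phi_j)$, and the constant term drops out because $\Phi$ is cuspidal (so $\mathbb{P}_2^3\Phi$ is orthogonal to constants). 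The Eisenstein contribution is $\frac{1}{4\pi i}\int_{(0)}\langle P^a, E(*;\mu)\rangle\,E(*;\mu)\,d\mu$ after normalising the inner product on the continuous spectrum; here I would use the standard fact that $\|E(*;\mu)\|$ is captured by $|\Lambda(1+2\mu)|^{2}$ in the measure, which produces the $|\Lambda(1+2\mu)|^{-2}$ in (\ref{specside}).

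The next step is to compute the two kinds of projection coefficients by unfolding. For a cusp form $\phi$ with Laplace eigenvalue parameter $\mu(\phi)$ and Fourier coefficients $\mathcal{B}_\phi(a)$, unfolding $P^a$ against $\phi$ over $U_2(\Z)\setminus\mathfrak{h}^2$ gives
$\langle P^a,\phi\rangle = \int_0^\infty h(ay)\,\ovA{(\widehat\phi)_a(y)}\,\frac{dy}{y^2}
= \ovA{\mathcal{B}_\phi(a)}\,a^{-1/2}\int_0^\infty h(y)\,W_{\mu(\phi)}(y)\,\frac{dy}{y^2}
= \ovA{\mathcal{B}_\phi(a)}\,a^{-1/2}\,h^{\#}(\mu(\phi))$,
using (\ref{eiscuspcoeff}), the substitution $y\mapsto y/a$, the reality of $W_\mu$ on the relevant range, and Definition \ref{defwhittrans}. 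The identical computation with $E(*;\mu)$ in place of $\phi$, together with (\ref{eisfournorm}), yields $\langle P^a,E(*;\mu)\rangle = a^{-1/2}\,\overline{\mathcal{B}(a;\mu)}\,h^{\#}(\mu) = a^{-1/2}\,\frac{a^{-\mu}\sigma_{-2\mu}(a)}{\overline{\Lambda(1+2\mu)}}\,h^{\#}(\mu)$ on $\re\mu=0$ (where $\overline{\Lambda(1+2\mu)}=\Lambda(1-2\mu)$ and $a^{-\bar\mu}=a^{-\mu}$... one must be slightly careful, but $\mu\in i\R$ makes $\sigma_{-2\mu}(a)$ and the exponent behave). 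Substituting both into the spectral expansion and then pairing the resulting expansion of $P^a$ against $\mathbb{P}_2^3\Phi\cdot|\det *|^{\overline s-\frac12}$ converts each inner product $\langle\phi_j,\mathbb{P}_2^3\Phi\cdot|\det|^{\overline s-1/2}\rangle$ into $\tfrac12\Lambda(s,\phi_j\otimes\widetilde\Phi)$ by Proposition \ref{ranselmainthm}, and each Eisenstein pairing into $\tfrac12\Lambda(s+\mu,\widetilde\Phi)\Lambda(s-\mu,\widetilde\Phi)$ by Proposition \ref{ranseleis}. The factors of $2$ and $a^{\pm1/2}$ then combine to give exactly (\ref{specside}).

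Two points require care. First, the interchange of the spectral sum/integral with the Rankin--Selberg integral over $\Gamma_2\setminus GL_2(\R)$: one needs absolute convergence, which follows from the rapid decay $h^{\#}(\mu_j)=H(\mu_j)\ll e^{-2\pi|\mu_j|}$ coming from $H\in\mathcal{C}_\eta$, the decay (\ref{decbdd}) of $H^\flat=h$ controlling the growth of $P^a$, the polynomial growth of $\Lambda(s,\phi_j\otimes\widetilde\Phi)$ in $\mu_j$ (convexity plus Stirling in the archimedean factors) balanced by Weyl's law, and on the continuous side the analogous polynomial bounds for $\Lambda(s\pm\mu,\widetilde\Phi)/|\Lambda(1+2\mu)|^2$ against $e^{-2\pi|\mu|}$. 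Second, one must justify that Proposition \ref{ranselmainthm} (proved for $\re s\gg1$) may be invoked here: the cleanest route is to establish (\ref{specside}) first in a region $\re s\gg 1$ where everything converges absolutely, and then note both sides extend meromorphically (indeed holomorphically, since $\Phi$ is cuspidal so the completed Rankin--Selberg $L$-functions are entire) to all $s\in\C$, giving the identity for arbitrary $s$. I expect the main obstacle to be precisely this convergence/uniformity bookkeeping — getting a clean dominated-convergence argument that legitimises unfolding term-by-term and swapping the continuous-spectrum integral past the $GL_2$ integral — rather than any individual computation, each of which is a short unfolding.
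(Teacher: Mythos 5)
Your route --- spectrally decompose $P^a$ in $L^2(\Gamma_2\setminus\mathfrak{h}^2)$, compute the projections by unfolding the Poincar\'e series, then convert each Rankin--Selberg pairing via Propositions \ref{ranselmainthm} and \ref{ranseleis}, first for $\re s\gg 1$ and then by continuation --- is exactly the standard one, and it is what the paper's cited source [Kw23] does; so the approach matches. There is, however, one arithmetic slip that would leave you unable to "combine the factors of $2$ and $a^{\pm 1/2}$" at the end: unfolding gives
\begin{equation*}
\langle P^a,\phi\rangle \;=\; \frac{\overline{\mathcal{B}_\phi(a)}}{\sqrt{a}}\int_0^\infty h(ay)\,W_{\mu(\phi)}(ay)\,\frac{dy}{y^2},
\end{equation*}
and the substitution $y\mapsto y/a$ sends $dy/y^2$ to $a\,dy/y^2$ (the measure is not multiplicatively invariant), so the coefficient is $a^{+1/2}\,\overline{\mathcal{B}_\phi(a)}\,h^{\#}(\mu(\phi))$, not $a^{-1/2}(\cdots)$. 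It is precisely this $a^{1/2}$ that cancels against the prefactor $2a^{-1/2}$ together with the $\tfrac12$ from (\ref{sameparun}) to produce (\ref{specside}) exactly; with your value the right-hand side would carry a spurious $a^{-1}$. Two smaller points: the residual (constant) term vanishes simply because $\int_0^1 e(ax)\,dx=0$ for $a\neq 0$, with no appeal to cuspidality of $\Phi$ needed; and the restriction of the sum to \emph{even} $\phi_j$ comes not from $\langle P^a,\phi_j\rangle$ (which need not vanish for odd forms) but from the vanishing of the $GL(3)\times GL(2)$ period for odd forms, which is why Proposition \ref{ranselmainthm} carries the evenness hypothesis --- worth one explicit sentence in the write-up.
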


	\begin{proof}
See  \cite{Kw23}. 
		\end{proof}
	
			
				\subsection{Archimedean Analysis:  Properties of the Integral Transforms $\mathcal{F}^{(\pm)}_{\Phi}$}\label{Stirl}

			Let $H=h^{\#}$. The integral transforms for this  article as well as \cite{Kw23, Kw23a+}  are given by
			\begin{align}\label{duaintrafir}
				\hspace{10pt}	(\mathcal{F}^{(\pm)}_{\Phi} H)\left(s_{0},  s \right) \ = \   \int_{0}^{\infty}  \int_{0}^{\infty} \  h\left( \frac{y_{1}}{\sqrt{1+y_{0}^2}}\right) \ & \frac{ y_{0}^{2s-s_{0}}}{(1+y_{0}^{2})^{\frac{s}{2}-s_{0}+\frac{1}{4}}}   \nonumber\\
				&	\hspace{-55pt} \cdot \,  \left( \int_{0}^{\infty}  \ W_{-\alpha(\Phi)}\begin{psmallmatrix}
					Xy_{1} & & \\
					            & X & \\
					            &     & 1
				\end{psmallmatrix}  e(\pm Xy_{0}) X^{s_{0}-1} \ d^{\times} X \right)
				y_{1}^{s-\frac{1}{2}}\   \  \frac{dy_{0}\, dy_{1}}{y_{0} y_{1}^2}. \nonumber\\ 
			\end{align}
			
			Readers should consult  \cite{Kw23} for various formulations of  (\ref{duaintrafir}) in terms of  Mellin-Barnes integrals. For technical reasons, it is often convenient to work with the `perturbed' version of  $(\mathcal{F}^{(\pm)}_{\Phi} H)\left(s_{0},  s \right) $, denoted by $(\mathcal{F}^{(\pm)}_{\Phi} H)\left(s_{0},  s;\, \phi \right) $  (for $\phi \in (0, \pi/2]$), see equation (7.6)-(7.7) of \cite{Kw23}.  We have $(\mathcal{F}^{(\pm)}_{\Phi} H)\left(s_{0},  s \right) = (\mathcal{F}^{(\pm)}_{\Phi} H)\left(s_{0},  s; \, \pi/2 \right) $. For the proof of (\ref{duaintrafir}), see Section \ref{suppInt}.

			As in \cite{Kw23, Kw23a+}, we take $\epsilon:=1/100$\,  throughout this article.

			\begin{prop}\label{anconpr}
				Suppose  $H\in \mathcal{C}_{\eta}$, $s:= \sigma+it$ and $s_{0}:= \sigma_{0}+it_{0}$. Then
				
				\begin{enumerate}
					
					\item For any $\phi \in (0, \pi/2]$, the transform   $(s_{0}, s)\mapsto (\mathcal{F}_{\Phi}^{(\pm)}H)(s_{0},\, s;\, \phi)$  is holomorphic on the domain
					\begin{align}\label{newdomain}
				  \mathcal{D} \ := \  \left\{  (\sigma_{0}, \sigma): \,    \sigma_{0} \, > \,  \epsilon,  \  \sigma \, < \, 4, \   2\sigma-\sigma_{0}-\epsilon \, > \, 0 \right\}.	
					\end{align}
					
					\item Whenever $(\sigma_{0}, \sigma) \in \mathcal{D}$,  $|t| <T$ and $\phi \in (0, \pi/2)$, the transform $(\mathcal{F}_{\Phi}^{(\pm)}H)\left(s_{0}, \, s; \, \phi\right)$ has exponential decay as  $|t_{0}| \to \infty$.   
					
					\item There exists a constant  $B=B_{\eta}$ such that  whenever $(\sigma_{0}, \sigma) \in 
					\mathcal{D}$, $|t| <T$, and $|t_{0}| \gg 1$,  we have the estimate
					\begin{align}\label{refbdd}
						\left|(\mathcal{F}_{\Phi}^{(\pm)}H)\left(s_{0}, \, s\right) \right| \ \ll \  |t_{0}|^{8-\frac{\eta}{2}}   \log^{B} |t_{0}|,
					\end{align}
					where the implicit constants depend only on $\eta$, $T$, $\Phi$. 
				\end{enumerate}
			\end{prop}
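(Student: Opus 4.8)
\textbf{Proof proposal for Proposition \ref{anconpr}.}

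The plan is to establish all three parts by reducing the double (in fact triple) integral in (\ref{duaintrafir}) to an iterated Mellin--Barnes integral, so that holomorphy, exponential decay, and the polynomial-in-$|t_0|$ bound all follow from Stirling's formula applied to the gamma factors, combined with the decay bound $H^\flat(y) \ll \min\{y, y^{-1}\}^\eta$ of Proposition \ref{inKLconv}. First I would perform the inner $X$-integral in (\ref{duaintrafir}): this is a Mellin transform of the $GL_3(\R)$ Whittaker function against an additive character $e(\pm X y_0)$, which by the standard Mellin--Barnes representation of $W_{-\alpha}$ (as in Chapter 6.5 of \cite{Go15} and the reformulations recorded in \cite{Kw23}) produces a ratio of products of $\Gamma_{\R}$-factors in the variables $s_0$ and the Langlands parameters $\alpha_i$, times a power of $y_0$. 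I would then carry out the $y_0$- and $y_1$-integrals against $h = H^\flat$; the $y_1$-integral, after inserting the Mellin--Barnes form (\ref{invers}) of $H^\flat$, becomes a beta-type integral contributing further gamma factors, and the $y_0$-integral over $(1+y_0^2)$-powers is a classical beta integral. The upshot is a representation of $(\mathcal{F}_\Phi^{(\pm)}H)(s_0,s;\phi)$ as a contour integral in an auxiliary complex variable of an expression of the form $H(\mu)$ times a quotient of gamma functions whose arguments are affine in $s_0, s, \mu, \alpha_i$.

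For part (1), holomorphy on the domain (\ref{newdomain}): the only obstructions to analytic continuation are the poles of the gamma factors in the numerator crossing the contour, and the convergence of the various one-dimensional integrals at $0$ and $\infty$. The conditions $\sigma_0 > \epsilon$, $\sigma < 4$, and $2\sigma - \sigma_0 - \epsilon > 0$ are exactly what is needed to keep the contour of the $\mu$-integral (taken on $\re \mu = 0$, or shifted slightly) separated from the poles, and to guarantee absolute convergence of the $y_0$- and $y_1$-integrals using (\ref{decbdd}); since $\mu \mapsto H(\mu)$ is entire on $|\re\mu| < 2\eta$ with exponential decay, one may shift contours freely within that strip, and Morera's theorem (or differentiation under the integral sign, justified by the exponential decay of $H$) gives holomorphy in $(s_0,s)$ throughout (\ref{newdomain}). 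The perturbation parameter $\phi \in (0,\pi/2]$ enters only through a harmless bounded factor and does not affect this analysis; setting $\phi = \pi/2$ recovers the unperturbed transform.

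For parts (2) and (3), I would apply Stirling's asymptotics to the gamma quotient in the Mellin--Barnes representation. With $s = \sigma + it$ fixed in a bounded range $|t| < T$ and $s_0 = \sigma_0 + it_0$ with $|t_0| \to \infty$, the gamma factors in the numerator (which carry the $s_0$-dependence) decay like $e^{-c|t_0|}$ in the relevant ranges when $\phi \in (0,\pi/2)$, because the perturbation keeps the phases off the critical configuration where cancellation would be lost --- this gives the exponential decay in part (2). For part (3), at $\phi = \pi/2$ the exponential decay degrades to polynomial decay; here one tracks the exponent carefully: each of the $\Gamma_{\R}$-factors contributes a power of $|t_0|$ via Stirling, the $1/|\Gamma(\mu)|^2$ weight in (\ref{invers}) combined with the $e^{-2\pi|\mu|}$ decay of $H$ controls the $\mu$-integral uniformly, and the net polynomial growth works out to $|t_0|^{8 - \eta/2}\log^B|t_0|$ for a suitable $B = B_\eta$, with the $\log^B$ absorbing the contributions from shifting contours past finitely many poles and from the Hurwitz-type factors. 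The implicit constant depends on $\eta$, $T$, and $\Phi$ (through the $\alpha_i$), as claimed.

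The main obstacle I anticipate is \textbf{part (3)}: bookkeeping the exact power $8 - \eta/2$. One must count the $\Gamma_{\R}$-factors contributed by the $GL_3$ Whittaker Mellin transform (three pairs, hence up to six half-powers of $|t_0|$ in the worst stationary-phase configuration) against the decay $\eta$ extracted from $H^\flat$ via (\ref{decbdd}), while simultaneously ensuring the bound is \emph{uniform} in $|t| < T$ and that shifting the $\mu$-contour to optimize decay does not cross poles that would spoil holomorphy established in part (1). Getting the numerology to land on precisely $8 - \eta/2$ (rather than, say, $9 - \eta/2$) requires the sharp form of the Whittaker Mellin--Barnes expansion and careful attention to which gamma factors actually depend on $t_0$; this is where I would spend the bulk of the technical effort, drawing on the analogous computations already carried out in \cite{Kw23}.
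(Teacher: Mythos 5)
The paper does not prove this proposition in situ: it is imported wholesale from Propositions 8.1 and 9.1 of \cite{Kw23}, and your general strategy (Mellin--Barnes representation of the $X$-, $y_0$-, $y_1$-integrals, then Stirling's formula, with the decay of $H$ on $|\re\mu|<2\eta$ controlling contour shifts) is indeed the route that companion paper takes, as the surrounding text of Section \ref{Stirl} indicates. So your plan is pointed in the right direction.

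However, as a proof your proposal has two genuine gaps. First, in part (2) you assert that for $\phi\in(0,\pi/2)$ ``the perturbation keeps the phases off the critical configuration,'' but you never identify the actual mechanism: the perturbation replaces the oscillatory factor $e(\pm Xy_0)$ by a rotated exponential carrying a genuine damping factor (roughly $e^{-2\pi Xy_0\cos\phi}$), which is what converts oscillation into exponential decay in $|t_0|$; without naming this, the claim of exponential decay is unsupported, and it is precisely the degeneration of this damping at $\phi=\pi/2$ that forces the separate polynomial bound of part (3). Second, and more seriously, part (3) is not proved at all: you state that ``the net polynomial growth works out to $|t_0|^{8-\eta/2}\log^B|t_0|$'' and then explicitly defer the bookkeeping that would establish this exponent. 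That bookkeeping --- counting the $t_0$-dependence of each gamma factor in the $GL(3)$ Whittaker Mellin--Barnes expansion, choosing the $\mu$-contour shift that trades the $e^{-2\pi|\mu|}$ decay of $H$ against the growth of $1/|\Gamma(\mu)|^2$ and the other gamma quotients, and verifying uniformity in $|t|<T$ --- is the entire content of the estimate; a proof that stops short of it has established parts (1) and (2) in outline but not part (3). Since the precise exponent $8-\eta/2$ is what later guarantees absolute convergence of the $s_0$-integral on the line $\re s_0=1/2$ against the polynomially growing $\mathcal{L}^{(a)}_{\pm}$ (given $\eta>40$), this is not a cosmetic omission.
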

			
			\begin{proof}
				See Proposition 8.1 and Proposition 9.1 of  \cite{Kw23}.
			\end{proof}

			 	\begin{prop}\label{secMTcom}
				Suppose  $1/2  <  \sigma  < 1$. Then 
				\begin{align}\label{secMTcomid}
				\hspace{10pt} 	\sum_{\pm} \ (\mathcal{F}^{(\pm)}_{\Phi} H)\left(2s-1,  s \right) 
					\ = \ \pi^{\frac{1}{2}-s}  &\cdot \,  \prod_{i=1}^{3} \ \frac{\Gamma\left(s-\frac{1}{2}+ \frac{\alpha_{i}}{2}\right)}{\Gamma\left(1-s- \frac{\alpha_{i}}{2}\right)} \nonumber\\
					&\hspace{25pt}  \cdot \,  \int_{(0)} \  \frac{H(\mu)}{\left| \Gamma(\mu)\right|^2}   \cdot \prod\limits_{i=1}^{3}  \ \prod\limits_{\pm} \  \Gamma\left( \frac{1-s+ \alpha_{i}\pm \mu}{2}\right) \ \frac{d\mu}{2\pi i}. 
				\end{align}
			\end{prop}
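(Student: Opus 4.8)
The plan is to evaluate the left side of \eqref{secMTcomid} directly from the definition \eqref{duaintrafir} of the transform, exploiting that the sum over the two signs symmetrises the additive character. First, note that by Proposition \ref{anconpr}(1) the point $s_{0}=2s-1$ lies in the holomorphy region \eqref{newdomain} as soon as $\tfrac12+\tfrac{\epsilon}{2}<\sigma<4$: indeed then $\sigma_{0}=2\sigma-1>\epsilon$, $\sigma<4$, and $2\sigma-\sigma_{0}-\epsilon=1-\epsilon>0$. Both sides of \eqref{secMTcomid} are moreover holomorphic throughout the strip $\tfrac12<\sigma<1$ --- on the right since $\re(s-\tfrac12+\tfrac{\alpha_{i}}{2})=\sigma-\tfrac12>0$ and, on the contour $\re\mu=0$, $\re\big(\tfrac{1-s+\alpha_{i}\pm\mu}{2}\big)=\tfrac{1-\sigma}{2}>0$, so no poles are met --- hence it suffices to prove the identity for $\sigma$ slightly above $\tfrac12$ and then invoke analytic continuation. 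Summing the two signs replaces $e(\pm Xy_{0})$ by $e(Xy_{0})+e(-Xy_{0})=2\cos(2\pi Xy_{0})$; the odd ($\sin$) parts cancel, and this cancellation is exactly why $\mathcal{F}^{(+)}_{\Phi}H+\mathcal{F}^{(-)}_{\Phi}H$ has a closed form while neither summand does.

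Next I would pass to the Mellin--Barnes presentation of the transform recorded in \cite{Kw23} (working if convenient with the perturbed version $(\mathcal{F}^{(\pm)}_{\Phi}H)(s_{0},s;\phi)$ and letting $\phi\to\pi/2$ at the end), and expand the $GL_{3}(\R)$ Whittaker function $W_{-\alpha}$ by Stade's formula so that its recursive $GL_{3}/GL_{2}$ structure exposes the $GL_{2}(\R)$ Whittaker functions $W_{\mu}$ together with the Kontorovich--Lebedev density $|\Gamma(\mu)|^{-2}\,d\mu$. At $s_{0}=2s-1$ the weight $y_{0}^{2s-s_{0}}(1+y_{0}^{2})^{-(s/2-s_{0}+1/4)}$ collapses to $y_{0}\,(1+y_{0}^{2})^{3s/2-5/4}$, so that $dy_{0}/y_{0}$ absorbs the monomial and, after the substitution $v=y_{1}/\sqrt{1+y_{0}^{2}}$ disentangles $h$, the $y_{0}$-integration reduces to an elementary Beta integral while the $y_{1}$-integration reconstitutes the Kontorovich--Lebedev transform $h^{\#}=H$ (Definition \ref{defwhittrans}). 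The decisive archimedean input is then the Mellin transform of $W_{-\alpha}$ against $\cos(2\pi Xy_{0})X^{2s-2}$, which in Stade's parametrisation factors into one one-dimensional transform per Langlands parameter, of the shape
\[
\int_{0}^{\infty}\cos(2\pi u)\,u^{\,2s-1+\alpha_{i}}\,\frac{du}{u}
=2^{-1}\,\pi^{\,3/2-2s-\alpha_{i}}\,\frac{\Gamma\!\big(s-\tfrac12+\tfrac{\alpha_{i}}{2}\big)}{\Gamma\!\big(1-s-\tfrac{\alpha_{i}}{2}\big)},
\]
the evaluation following from $\int_{0}^{\infty}\cos(2\pi u)u^{\beta-1}du=(2\pi)^{-\beta}\Gamma(\beta)\cos(\pi\beta/2)$ together with Legendre duplication and reflection. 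Each such transform converges absolutely precisely for $\tfrac12<\sigma<1$ --- this is the origin of the hypothesis --- and the product over $i=1,2,3$ supplies the ratio $\prod_{i}\Gamma(s-\tfrac12+\tfrac{\alpha_{i}}{2})/\Gamma(1-s-\tfrac{\alpha_{i}}{2})$, which together with the surviving elementary powers of $2$ and $\pi$ (simplified using $\sum_{i}\alpha_{i}=0$) assembles into the prefactor $\pi^{1/2-s}\prod_{i}\Gamma(s-\tfrac12+\tfrac{\alpha_{i}}{2})/\Gamma(1-s-\tfrac{\alpha_{i}}{2})$ of \eqref{secMTcomid}. What is left is a single Mellin--Barnes contour integral in $\mu$ with density $|\Gamma(\mu)|^{-2}$; its Gamma-integrand is matched with $\prod_{i}\prod_{\pm}\Gamma\big(\tfrac{1-s+\alpha_{i}\pm\mu}{2}\big)$, a step one may organise by recognising that product --- up to the factor $\pi^{3(1-s)}$ coming from $\sum_{i}\alpha_{i}=0$ --- as $4$ times the $GL_{3}(\R)\times GL_{2}(\R)$ Rankin--Selberg integral of Proposition \ref{stadediff1} with $s$ replaced by $1-s$ and $\alpha$ by $-\alpha$. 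This completes the identity.

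The main obstacle is the Gamma-function bookkeeping in the middle step: one must follow every power of $2\pi$ and every $\Gamma$-ratio produced by Stade's expansion and the $X$-, $y_{0}$- and $y_{1}$-integrations and split the outcome cleanly into the displayed $\mu$-independent prefactor and the $\mu$-dependent integrand, with no stray constant surviving. A secondary, purely technical, point is the legitimacy of interchanging the (finite) sum over $\pm$, the $X$-, $y_{0}$- and $y_{1}$-integrations, and the auxiliary Mellin--Barnes contour integrals; here one uses the rapid decay $H(\mu)\ll e^{-2\pi|\mu|}$ of the class $\mathcal{C}_{\eta}$ together with the decay and polynomial-growth bounds of Proposition \ref{anconpr}, and the restriction $\tfrac12<\sigma<1$ keeps every contour in its region of absolute convergence.
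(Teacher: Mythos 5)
The paper does not actually prove this proposition here --- the stated proof is a citation to Theorem 1.2 of \cite{Kw23} --- so your attempt has to be measured against what that derivation must contain. Much of your outline is verifiably on target: at $s_{0}=2s-1$ the weight in \eqref{duaintrafir} does collapse to $y_{0}\,(1+y_{0}^{2})^{3s/2-5/4}$; the two signs do combine to $2\cos(2\pi Xy_{0})$; the evaluation $\int_{0}^{\infty}\cos(2\pi u)\,u^{\beta-1}\,du=2^{-1}\pi^{1/2-\beta}\,\Gamma(\beta/2)/\Gamma\bigl(\tfrac{1-\beta}{2}\bigr)$ with $\beta=2s-1+\alpha_{i}$ reproduces exactly the Gamma-ratio in the prefactor, and its convergence range $0<\re\beta<1$ is exactly the hypothesis $1/2<\sigma<1$; and the $\mu$-integrand is indeed, up to $\pi^{3(1-s)}$, four times the local Rankin--Selberg integral of Proposition \ref{stadediff1} at $(1-s,-\alpha)$. (A small quibble: Proposition \ref{anconpr}(1) gives holomorphy of the left side only for $\sigma>\tfrac12+\tfrac{\epsilon}{2}$, so the continuation argument should start from that sub-strip.)

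The genuine gap is at the decisive step. The $GL_{3}(\R)$ Whittaker function does \emph{not} factor over the Langlands parameters, so the assertion that the Mellin transform of $W_{-\alpha}$ against $\cos(2\pi Xy_{0})X^{2s-2}$ ``in Stade's parametrisation factors into one one-dimensional transform per Langlands parameter'' cannot be executed as stated; it is precisely the conclusion one is trying to reach, not a structural input. What the Vinogradov--Takhtadzhyan/Stade expansion actually provides is $W_{-\alpha}$ as an integral of $GL_{2}(\R)$ Whittaker functions against a kernel (equivalently, a Mellin--Barnes integral in an auxiliary variable $z$). After the $X$-, $y_{0}$- and $y_{1}$-integrations --- note also that your substitution $v=y_{1}/\sqrt{1+y_{0}^{2}}$ does not fully disentangle the variables, since $\sqrt{1+y_{0}^{2}}$ persists inside the Whittaker argument --- one is left with a \emph{double} contour integral in $(\mu,z)$, and the clean product $\prod_{i}\Gamma(s-\tfrac12+\tfrac{\alpha_{i}}{2})/\Gamma(1-s-\tfrac{\alpha_{i}}{2})$ only emerges after the auxiliary $z$-contour is collapsed by a Barnes-type lemma. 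That collapse, not the tracking of powers of $2$ and $\pi$, is the substantive content of the identity, and it is exactly what Theorem 1.2 of \cite{Kw23} supplies (compare the Barnes-lemma provenance of Proposition \ref{stadediff1} via \cite{Bu88} and the Barnes-type identities of \cite{Kw23a+}). As written, your proposal defers this step to ``bookkeeping,'' which understates where the proof actually lives.
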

			
			\begin{proof}
				See Theorem 1.2 of \cite{Kw23}. 
			\end{proof}


 \section{Twisted Key Identity}\label{Basicide}

  In this section, we illustrate how the arithmetic  (or non-archimedean) twists alluded to Section \ref{intr} enter the picture of our period integral approach. We must include a couple of technical adjustments to our argument in  \cite{Kw23} in order to  incorporate the desired new features.

 \begin{prop}\label{incomf}
 Let  $a\in \Z-\{0\}$  and $\Phi$ be a smooth automorphic function  of  $\Gamma_{3}$. Then for any $g\in GL_{3}(\R)$,  we have
 	\begin{align}\label{keylemform}
	\hspace{10pt}	\int_{0}^{1} \Phi\left[
	\begin{pmatrix}
		1 & u_{1,2} &  \\
		& 1          & \\
		&             & 1
	\end{pmatrix} g\right] \, &   e( -a u_{1,2}) \ du_{1,2}  \nonumber\\ 
	\ &= \ \   \sum_{d \mid a} \ 
	\sum_{\substack{a_{0} \in \Z  \\ \gcd(a_{0}, d)=1}} \ \sum_{a_{1}=-\infty}^{\infty} \  \left(\widehat{\Phi}\right)_{(a_{1},\, a/d)}\left[\begin{pmatrix}
		1 &            &         \\
		& \alpha & \beta \\
		& -a_{0} & d
	\end{pmatrix}g\right],
\end{align}
where  $(\alpha, \beta)$ is any pair of integers satisfying 
\begin{align}\label{extrstr}
	d\alpha+a_{0}\beta \ = \ 1. 
\end{align} 
 \end{prop}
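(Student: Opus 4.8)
The plan is to unfold the left-hand side as a Fourier expansion of $\Phi$ along the single unipotent variable $u_{1,2}$, and then to reorganize the resulting sum over $\Gamma_3$-cosets so that it matches the Fourier-Whittaker periods $(\widehat{\Phi})_{(a_1, a/d)}$ appearing on the right. More precisely, write $\Phi$ in its Piatetski-Shapiro--Shalika expansion. Since $\Phi$ is automorphic for $\Gamma_3$, it is in particular invariant under the full unipotent radical; restricting attention to the abelian quotient generated by $u_{1,2}$, one first extracts the $a$-th Fourier coefficient in $u_{1,2}$ from the remaining (incomplete) Fourier expansion. The key point is that the integral on the left is \emph{not} a full Fourier coefficient: only $u_{1,2}$ is averaged, so what survives is a sum over the relevant $\Gamma_2$-type cosets (embedded in the lower-right $2\times 2$ block) rather than a single Whittaker function.

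\textbf{Key steps, in order.} First I would recall the standard expansion of $\Phi$ over $U_2(\Z)\backslash SL_2(\Z)$ acting on the lower-right block, i.e. an expansion of the form $\Phi(g) = \sum_{\gamma} \sum_{m} (\widehat{\Phi})_{(m,\,\cdot)}\big(\gamma g\big)$, where $\gamma$ runs over cosets $U_2(\Z)\backslash SL_2(\Z)$ embedded as $\begin{psmallmatrix} 1 & & \\ & * & * \\ & * & * \end{psmallmatrix}$; this is the $GL(3)$ analogue of the Fourier expansion used for incomplete Eisenstein/Poincaré-type unfoldings and is exactly the content underlying Definition \ref{fourcoeff} together with the cuspidality/automorphy of $\Phi$. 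Second, I would substitute this into $\int_0^1 \Phi[\,\cdot\,] e(-a u_{1,2})\,du_{1,2}$ and use the transformation behavior \eqref{unipotran}: conjugating the unipotent matrix $\begin{psmallmatrix} 1 & u_{1,2} & \\ & 1 & \\ & & 1\end{psmallmatrix}$ past a coset representative $\gamma = \begin{psmallmatrix} 1 & & \\ & \alpha & \beta \\ & -a_0 & d \end{psmallmatrix}$ produces a new unipotent element whose $u_{1,2}$-entry is a \emph{linear} function of $u_{1,2}$ with coefficient determined by the bottom row $(-a_0, d)$ of $\gamma$ — specifically the $(1,2)$-entry scales by $d$ and the $(1,3)$-entry picks up $-a_0 u_{1,2}$. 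Third, carrying out the $\int_0^1 \cdots e(-a u_{1,2})\,du_{1,2}$ integral against this, the orthogonality of additive characters forces the first index of the Fourier-Whittaker datum to be $a/d$ and forces the divisibility $d\mid a$, while the congruence condition $\gcd(a_0,d)=1$ is exactly the condition that $\begin{psmallmatrix}\alpha & \beta\\ -a_0 & d\end{psmallmatrix}\in SL_2(\Z)$, i.e. that integers $\alpha,\beta$ with $d\alpha + a_0\beta = 1$ exist. Finally, the leftover sum over $a_1 = m \in \Z$ is simply the remaining (un-averaged) Fourier index in the $u_{2,3}$ direction, giving the stated double sum over $d\mid a$, over $a_0 \bmod d$ with $\gcd(a_0,d)=1$ (equivalently $a_0 \in \Z$ coprime to $d$, with the coset only depending on $a_0$ suitably), and over $a_1\in\Z$.

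\textbf{Main obstacle.} The delicate point is getting the coset bookkeeping exactly right: one must verify that the $SL_2(\Z)$-cosets $U_2(\Z)\backslash SL_2(\Z)$ embedded in the lower-right block are parametrized precisely by pairs $(a_0, d)$ with $d \geq 1$, $\gcd(a_0,d)=1$, $a_0$ running modulo $d$ (with the top row $(\alpha,\beta)$ determined modulo the action of $U_2(\Z)$ by \eqref{extrstr}), and that the character integral in $u_{1,2}$ interacts with this parametrization to yield exactly $d \mid a$ and first index $a/d$ — with no spurious Jacobian factors or overcounting. A secondary technical nuisance is justifying the interchange of the integral $\int_0^1 du_{1,2}$ with the (absolutely convergent, by cuspidality and rapid decay of Whittaker functions) sum defining the Fourier expansion; this is routine given the decay estimates but should be noted. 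Once the coset-to-$(a_0,d)$ dictionary is pinned down, the identity follows by a direct character-orthogonality computation.
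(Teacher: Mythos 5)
Your plan — expand $\Phi$ along the first-row unipotent, regroup into $SL_2(\Z)$-cosets embedded in the lower-right block, then let the $u_{1,2}$-character integral pick out the $d\mid a$ terms — can be made to work, but as written it has two genuine gaps. First, you lean on a Piatetski-Shapiro--Shalika-type expansion justified by ``cuspidality/automorphy''. The proposition is stated for an \emph{arbitrary} smooth automorphic function of $\Gamma_{3}$, and the paper needs exactly that generality: the remark following Corollary \ref{incomexpf} keeps the degenerate term $(\widehat{\Phi})_{(0,\,a/d)}$ precisely so the identity applies to the minimal-parabolic Eisenstein series, where the PSS expansion is simply not available. Moreover the standard PSS statement places the $SL_2$-sum in the \emph{upper-left} block; the lower-right (``mirror'') version you quote is not a citable off-the-shelf fact and needs its own derivation. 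The repair is to avoid PSS altogether: Fourier-expand $\Phi$ abelianly along $\left\{\begin{psmallmatrix}1&x&y\\&1&\\&&1\end{psmallmatrix}\right\}$, regroup the nonzero frequencies into $SL_2(\Z)$-orbits with representatives $(d',0)$, note that the degenerate $(0,0)$-frequency is annihilated by $\int_0^1 e(-au_{1,2})\,du_{1,2}=0$ since $a\neq 0$ (so cuspidality is never used, and smoothness alone justifies the pointwise Fourier expansions — no appeal to Whittaker decay), and then expand each piece in $u_{2,3}$ to produce the full periods $\sum_{a_1\in\Z}(\widehat{\Phi})_{(a_1,\,d')}$. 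Carrying this out honestly essentially reproduces the paper's proof, which never invokes an expansion theorem: it Fourier-expands in $u_{1,3}$, makes for each frequency $a_0$ the unimodular change of variables $(u_{1,2},u_{1,3})=(u_{1,2}',u_{1,3}')\begin{psmallmatrix}\alpha&\beta\\\gamma&\delta\end{psmallmatrix}$ collapsing the character to $e(-a'u_{1,2}')$ with $a'=\gcd(a_0,a)$, expands in $u_{2,3}$, and regroups the $a_0$-sum by $\gcd(a_0,a)$.

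Second, your coset bookkeeping, which you yourself flag as the main obstacle, is stated incorrectly and contradicts the identity (\ref{keylemform}) you are proving: a coset in $U_2(\Z)\backslash SL_2(\Z)$ is determined by its bottom row $(-a_0,d)$, an arbitrary primitive integer vector, so for each fixed $d\mid a$ the parameter $a_0$ runs over \emph{all} integers coprime to $d$ — infinitely many — exactly as in (\ref{keylemform}); ``$a_0$ running modulo $d$'' is the parametrization of Bruhat/Kloosterman double cosets, not of single cosets, and would yield a different (false) right-hand side. (In the paper's route this range appears automatically, since the $a_0$-sum over $\Z$ is merely split according to $\gcd(a_0,a)$.) A smaller slip in the same step: pushing $\begin{psmallmatrix}1&u&\\&1&\\&&1\end{psmallmatrix}$ across $\begin{psmallmatrix}1&&\\&\alpha&\beta\\&-a_0&d\end{psmallmatrix}$ produces the unipotent with $(1,2)$-entry $du$ and $(1,3)$-entry $-\beta u$ (not $-a_0u$); this is where the condition (\ref{extrstr}) $d\alpha+a_0\beta=1$ enters, and it is harmless only because the period's character, by (\ref{unipotran}), ignores the $u_{1,3}$-direction — but it should be computed correctly, and the index forced by orthogonality is the one paired with $u_{1,2}$, namely the second index $a/d$, not the first.
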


 \begin{proof}
 Firstly, we perform a Fourier expansion  along the abelian unipotent  subgroup
 	$\left\{\begin{psmallmatrix}
 		1 &    & * \\
 		& 1 &   \\
 		&    & 1
 	\end{psmallmatrix}\right\}$, which leads to
 	\begin{align}
 	\hspace{20pt}	\int_{0}^{1}  \ \Phi\left[
 		\begin{pmatrix}
 			1 & u_{1,2} &  \\
 			& 1 & \\
 			&    & 1
 		\end{pmatrix} g\right]\,  & e(-au_{1,2}) \ du_{1,2} \nonumber\\
 		\ = \ &  \sum_{a_{0}=-\infty}^{\infty} \ 	\int_{0}^{1}  \ 	\int_{0}^{1}   \ \Phi\left[
 		\begin{pmatrix}
 			1 & u_{1,2} & u_{1,3}  \\
 			& 1          & \\
 			&             & 1
 		\end{pmatrix} g\right] e(-au_{1,2}-a_{0} u_{1,3}) \ du_{1,2} \ du_{1,3}. 
 	\end{align}
 	
 Secondly,  we consider a change of variables of the form $(u_{1,2}, u_{1,3}) \ = \  (u_{1,2}', u_{1,3}') \cdot \begin{psmallmatrix}
 		\alpha     & \beta \\
 		\gamma &\delta
 	\end{psmallmatrix}$ for each $a_{0}\in \Z$, where the matrix $\begin{psmallmatrix}
 		\alpha     & \beta \\
 		\gamma &\delta
 	\end{psmallmatrix} $ satisfies
  \begin{itemize}
 	\item  $	\gamma \ = \ \gamma(a_{0}) \ := \  -a_{0}/ \gcd\left(a_{0}, a\right)$ \ and \ $\delta \ = \ \delta(a_{0}) \ := \ a/\gcd\left(a_{0}, a\right)$;
 	
 	\item  $\alpha=\alpha(a_{0}), \ \beta=\beta(a_{0})$ be any pair of integers that satisfy \ $	\delta \alpha- \gamma\beta \ = \ 1$. 
 \end{itemize}
  Then $au_{1,2}+a_{0} u_{1,3}  =   a'u_{1,2}' $  \ with  \  $a'   =  a'(a_{0})  :=  a\alpha+a_{0}\beta$, and one can easily verify that 
  \begin{align*}
  	\hspace{20pt} 	\begin{pmatrix}
  		1 & u_{1,2} & u_{1,3}  \\
  		& 1          & \\
  		&             & 1
  	\end{pmatrix} 
  	\ = \  
  	\begin{pmatrix}
  		1 &                     &\\
  		&  \delta          &   -\beta\\
  		&   -\gamma   & \alpha
  	\end{pmatrix}
  	\begin{pmatrix}
  		1 & u_{1,2}' & u_{1,3}'  \\
  		& 1           & \\
  		&              & 1
  	\end{pmatrix} 
  	\begin{pmatrix}
  		1 &            &         \\
  		& \alpha & \beta \\
  		& \gamma & \delta 
  	\end{pmatrix}.
  \end{align*}
 The automorphy of $\Phi$ with respect to  $\Gamma_{3}$ gives 
 	\begin{align}
 	\hspace{20pt}	\int_{0}^{1} \ \Phi\left[
 		\begin{pmatrix}
 			1 & u_{1,2} &  \\
 			& 1          & \\
 			&             & 1
 		\end{pmatrix} 
 		g\right] \, & e(- a u_{1,2})\ du_{1,2} \nonumber\\
 		&\hspace{-40pt} \ = \  \sum_{a_{0}=-\infty}^{\infty} \  	\int_{0}^{1} \ 	\int_{0}^{1} \  \Phi\left[
 		\begin{pmatrix}
 			1 & u_{1,2}' & u_{1,3}'  \\
 			& 1           &                \\
 			&              & 1
 		\end{pmatrix} 
 		\begin{pmatrix}
 			1 &            &           \\
 			& \alpha & \beta    \\
 			&\gamma & \delta
 		\end{pmatrix}
 		g\right] e(- a'u_{1,2}') \ du_{1,2}' \ du_{1,3}'. 
 	\end{align}
 	
 	Thirdly, we perform a Fourier expansion along another abelian unipotent subgroup
 	$\left\{ \begin{psmallmatrix}
 		1 &     &    \\
 		& 1  & *  \\
 		&     & 1
 	\end{psmallmatrix}\right\}$. From this, we obtain 	\vspace{-0.2in} 
 	\begin{align}
 \hspace{25pt} 	\int_{0}^{1} \ \Phi\left[
 	\begin{pmatrix}
 		1 & u_{1,2} &  \\
 		& 1          & \\
 		&             & 1
 	\end{pmatrix} g\right] \, e( -a u_{1,2}) \ du_{1,2} 
 	\ \  =  \ \  
 	\sum_{a_{0},a_{1}=-\infty}^{\infty} \ \left(\widehat{\Phi}\right)_{(a_{1}, a')}\left[\begin{pmatrix}
 		1 &            &         \\
 		& \alpha & \beta \\
 		& \gamma  & \delta
 	\end{pmatrix}
 	g\right]. 
 \end{align}
  Upon breaking up the $a_{0}$-sum according to the value of $d:=(a_{0}, a)$ and noticing that $a'=d$, we have
   \vspace{-0.2in} 
  	\begin{align}
\hspace{10pt}	\int_{0}^{1} \ \Phi\left[
  	\begin{pmatrix}
  		1 & u_{1,2} &  \\
  		& 1          & \\
  		&             & 1
  	\end{pmatrix} g\right]\,   e( -a u_{1,2}) \ du_{1,2}  
  	\  =   \   \sum_{d\mid a} \ \sum_{\substack{a_{0}\in \Z \\ (a_{0},a )=d}} \ 
  	\sum_{a_{1}\in \Z} \ \left(\widehat{\Phi}\right)_{(a_{1}, d)}\left[\begin{pmatrix}
  		1 &            &         \\
  		& \alpha & \beta \\
  		& -a_{0}/d & a/d
  	\end{pmatrix}
  	g\right],
  \end{align}
  where $(\alpha, \beta)$ is any pair of integers satisfying $a\alpha+ a_{0}\beta =d$.  The conclusion (\ref{keylemform}) now follows from  the successive replacements $a_{0} \to da_{0}$ and   $d \to a/d$.
 
 \end{proof}


 The next step concerns the explication of  the pair  $(\alpha, \beta)$ in (\ref{incomf}) in which condition (\ref{extrstr}) will naturally come into play. To some extent, this may be compared  to  the step of applying  `additive reciprocity'   in  \cite{BHKM20}, where  its usage (and actually the full compositum of the transformations) was guided by the lengths of summations instead, see the sketch   \footnote{ Justifying (or  motivating) the insertion of various auxiliary additive factors in the main argument of \cite{BHKM20} in which the method of continuation was used.   }  in Section 1.4 therein.

 \begin{cor}\label{incomexpf}
 	Suppose $\Phi: \mathfrak{h}^3\rightarrow \C$ is an automorphic  form of $\Gamma_{3}$. Then for any $y_{0}, y_{1}>0$, we have
 	\begin{align}\label{incomexpform}
	\hspace{10pt} 	\int_{0}^{1} \ \Phi\left[
	\begin{pmatrix}
		1 & u_{1,2} &  \\
		& 1          & \\
		&             & 1
	\end{pmatrix} 
	\begin{pmatrix}
		y_{0}y_{1}&  & \\
		& y_{0} & \\
		&           & 1
	\end{pmatrix}\right]  &  \ e( -a u_{1,2}) \ du_{1,2}  \ \nonumber\\[-0.1in]
	&\nonumber\\ 
	\ &\hspace{-170pt} = \ \   \sum_{d \mid a} \ 
	\ \sum_{a_{1}\in \Z}  \  \left(\widehat{\Phi}\right)_{(a_{1},\, a/d)}\begin{pmatrix}
		y_{0}y_{1}&  & \\
		& y_{0} & \\
		&           & 1
	\end{pmatrix}   \nonumber\\[0.1in]
  &\hspace{-140pt}\ + \   \sum_{d \mid a} \ 
\sum_{\substack{a_{0} \in \Z -\{0\} \\ \gcd(a_{0}, d)=1}} \ 	\left(\widehat{\Phi}\right)_{(0,\, a/d)}\begin{pmatrix}
	\frac{y_{0} y_{1}}{\sqrt{(a_{0}y_{0})^2+d^2}} &  &  \\
	& \frac{y_{0}}{(a_{0} y_{0})^2+d^2} &  \\
	&  & 1 \\
\end{pmatrix}  \nonumber\\[0.1in]
	&  \hspace{-110pt}  \ + \   \sum_{d \mid a} \ 
	\sum_{\substack{a_{0} \in \Z -\{0\} \\ \gcd(a_{0}, d)=1}} \ \sum_{a_{1} \in \Z-\{0\}} \  e\left(\frac{a_{1}\overline{a_{0}}}{d}\right)\  \nonumber\\
	&\hspace{-80pt}  \cdot \left(\widehat{\Phi}\right)_{(a_{1},\, a/d)}\left[  \begin{pmatrix}
		1 &     &    \\
		& 1  & -\frac{1}{da_{0}}\frac{(a_{0}y_{0})^2}{d^2+(a_{0}y_{0})^2} \\
		&     &    1
	\end{pmatrix} \begin{pmatrix}
		\frac{y_{0} y_{1}}{\sqrt{(a_{0}y_{0})^2+d^2}} &  &  \\
		& \frac{y_{0}}{(a_{0} y_{0})^2+d^2} &  \\
		&  & 1 \\
	\end{pmatrix}\right], \nonumber\\
	\nonumber\\
\end{align}
where $a_{0}\overline{a_{0}}\equiv 1 \  (\bmod\, d)$ for\,  $\gcd(a_{0},d)=1$. 

 \end{cor}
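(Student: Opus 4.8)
The plan is to feed the diagonal matrix $g=\begin{psmallmatrix}y_{0}y_{1}&&\\&y_{0}&\\&&1\end{psmallmatrix}$ into Proposition~\ref{incomf} and then put each matrix $\begin{psmallmatrix}1&&\\&\alpha&\beta\\&-a_{0}&d\end{psmallmatrix}g$ occurring on the right of \eqref{keylemform} into Iwasawa position, following the pattern of the untwisted case treated in \cite{Kw23} but now keeping track of the divisor $d$ and the B\'ezout data. Throughout one uses that the Fourier--Whittaker period $(\widehat{\Phi})_{(m_{1},m_{2})}$ is right-invariant under $O(3,\R)$ and the centre --- immediate from \eqref{fourcoeff2} --- together with the unipotent-equivariance \eqref{unipotran}.

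First I would split the $a_{0}$-summation in \eqref{keylemform} into the ranges $a_{0}=0$ and $a_{0}\neq0$. In the range $a_{0}=0$ the relation \eqref{extrstr} leaves $\alpha,\beta$ as pure unipotent data, so the matrix is $g$ up to a unipotent factor with integral $u_{2,3}$-entry; since $e(a_{1}\beta)=1$ for $a_{1},\beta\in\Z$, \eqref{unipotran} reduces this range to the first line of \eqref{incomexpform}. In the range $a_{0}\neq0$ the key step is the Iwasawa decomposition of the lower $2\times2$ block $B=\begin{psmallmatrix}\alpha y_{0}&\beta\\-a_{0}y_{0}&d\end{psmallmatrix}$ of $\begin{psmallmatrix}1&&\\&\alpha&\beta\\&-a_{0}&d\end{psmallmatrix}g$: by \eqref{extrstr} one has $\det B=y_{0}$, and writing $B=\begin{psmallmatrix}1&X\\0&1\end{psmallmatrix}\begin{psmallmatrix}Y&0\\0&1\end{psmallmatrix}\sqrt{(a_{0}y_{0})^{2}+d^{2}}\,k$ with $k\in SO(2)$ gives $Y=\dfrac{y_{0}}{(a_{0}y_{0})^{2}+d^{2}}$ and $X=\dfrac{-\alpha a_{0}y_{0}^{2}+\beta d}{(a_{0}y_{0})^{2}+d^{2}}$. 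Hence, modulo $O(3,\R)$ and the centre,
\[
\begin{pmatrix}1&&\\&\alpha&\beta\\&-a_{0}&d\end{pmatrix}\begin{pmatrix}y_{0}y_{1}&&\\&y_{0}&\\&&1\end{pmatrix}
\ \equiv\
\begin{pmatrix}1&&\\&1&X\\&&1\end{pmatrix}
\begin{pmatrix}\dfrac{y_{0}y_{1}}{\sqrt{(a_{0}y_{0})^{2}+d^{2}}}&&\\[6pt]&\dfrac{y_{0}}{(a_{0}y_{0})^{2}+d^{2}}&\\[6pt]&&1\end{pmatrix}.
\]

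Next comes the arithmetic. Eliminating $\alpha$ via $d\alpha+a_{0}\beta=1$ rewrites the shift as $X=\dfrac{\beta}{d}-\dfrac{1}{da_{0}}\cdot\dfrac{(a_{0}y_{0})^{2}}{d^{2}+(a_{0}y_{0})^{2}}$; applying \eqref{unipotran} to the $\beta/d$ summand then extracts the phase $e(a_{1}\beta/d)$, and because \eqref{extrstr} gives $a_{0}\beta\equiv1\ (\mathrm{mod}\ d)$ this phase equals $e(a_{1}\overline{a_{0}}/d)$ with $a_{0}\overline{a_{0}}\equiv1\ (\mathrm{mod}\ d)$. Finally I would split the residual $a_{1}$-sum: the $a_{1}=0$ terms --- where the leftover unipotent factor carries no character and the summand is just its torus part --- produce the second line of \eqref{incomexpform}, and the $a_{1}\neq0$ terms produce the third, the constraints $d\mid a$, $\gcd(a_{0},d)=1$ and $a_{0}\neq0$ being carried along at every step.

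The one genuinely non-routine point is the Iwasawa step for $a_{0}\neq0$: getting both the torus coordinates $\bigl(\tfrac{y_{0}y_{1}}{\sqrt{(a_{0}y_{0})^{2}+d^{2}}},\ \tfrac{y_{0}}{(a_{0}y_{0})^{2}+d^{2}}\bigr)$ and, more delicately, the precise unipotent shift $X$ right, and then the manipulation using \eqref{extrstr} that brings $X$ into the displayed split form and identifies $e(a_{1}\beta/d)$ with $e(a_{1}\overline{a_{0}}/d)$. Everything else is bookkeeping about which unipotent pieces \eqref{unipotran} absorbs and which survive into lines two and three.
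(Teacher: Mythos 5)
Your proposal is correct and follows essentially the same route as the paper: split the sum from Proposition~\ref{incomf} into $a_{0}=0$ and $a_{0}\neq 0$, reduce the latter to a $2\times 2$ Iwasawa computation giving the torus part $\bigl(\tfrac{y_{0}y_{1}}{\sqrt{(a_{0}y_{0})^{2}+d^{2}}},\tfrac{y_{0}}{(a_{0}y_{0})^{2}+d^{2}}\bigr)$ and the shift $X=\tfrac{\beta d-\alpha a_{0}y_{0}^{2}}{(a_{0}y_{0})^{2}+d^{2}}=\tfrac{\beta}{d}-\tfrac{1}{da_{0}}\tfrac{(a_{0}y_{0})^{2}}{d^{2}+(a_{0}y_{0})^{2}}$, then use \eqref{extrstr} and \eqref{unipotran} to extract $e(a_{1}\overline{a_{0}}/d)$ and split off the $a_{1}=0$ terms; your direct decomposition of the lower-right block is just a cosmetic variant of the paper's $w_{\ell}(\cdot)^{\iota}w_{\ell}$ manoeuvre and yields the identical unipotent entry. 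The only caveat, which you share with the paper's own proof, is that the $a_{0}=0$ range forces $d=1$ (since $\gcd(0,d)=d$), so it produces only the $d=1$ part of the displayed first line of \eqref{incomexpform}, consistent with how the term is actually used later in \eqref{twisdiag}.
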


\begin{rem}
	\ 
	\begin{enumerate}
		
		\item Corollary \ref{incomexpf} can be understood as follows:  the exponential factor  $ e\left(a_{1}\overline{a_{0}}/d\right)$ in  (\ref{incomexpform}) is our desired `arithmetic twist', whereas the last line of (\ref{incomexpform}) will be regarded as the `analytic component'.  \vspace{0.1in}

		\item   The mentioned `analytic component' is more-or-less  the same as the one  for  the untwisted case of \cite{Kw23, Kw23a+}. This is anticipated and is a reflection of the local-global nature of our method.  
		\vspace{0.1in} 
		
		\item The expression (\ref{incomexpform}) will be simplified considerably in (\ref{cleanup}) upon plugging into the period integral $	\left(P^a, \, \mathbb{P}_{2}^{3} \Phi\cdot  |\det *|^{\overline{s}-\frac{1}{2}}\right)_{\Gamma_{2}\setminus GL_{2}(\R)}$. 
		\vspace{0.1in}

	\item  Corollary \ref{incomexpf} was stated  in a  slightly more general form as  it has further  applications in verifying Conjecture \ref{twiscubconj}, i.e., the twisted version of the main theorem of \cite{Kw23a+}, which requires us to take $\Phi= (E_{\min})^{^{*}}(\, *\, ; \, (\alpha_{1}, \alpha_{2}, \alpha_{3}))$.  When $\Phi$ is cuspidal, we have     $(\widehat{\Phi})_{(0, \, a/d)} \equiv 0$, so the second summand on the right side of  (\ref{incomexpform}) vanishes. 
	
	\end{enumerate}
	
\end{rem}

 \begin{proof}[Proof of Corollary \ref{incomexpf}]
 When $a_{0}=0$,  we have $d= \alpha=1$ and we may choose $\beta=0$ in Proposition \ref{incomf}. This results in the first term on the right side of (\ref{incomexpform}). 
 
 Suppose now $a_{0}\neq 0$\,  (with\,  $\gcd(a_{0},d)=1$).  Let   $w_{\ell}:= \begin{psmallmatrix}
 	&      & 1\\
 	& 1   &   \\
 	1	 &      & 
 \end{psmallmatrix}$ \ and \ $g^{\iota}\ := \ ^{t}g^{-1}$.  To reveal the non-archimedean component behind our calculations,  we must deviate from \cite{Kw23} by first  considering the product of matrices:
 \begin{align*}
 	\hspace{10pt} w_{\ell} \ \left[ 
 	\begin{pmatrix}
 		1 &            &         \\
 		& \alpha & \beta \\
 		& -a_{0} & d
 	\end{pmatrix}
 	\begin{pmatrix}
 		y_{0}y_{1}&  & \\
 		& y_{0} & \\
 		&           & 1
 	\end{pmatrix}  \right]^{\iota} w_{\ell},
 \end{align*}
which can be seen to be  
\begin{align*}
\hspace{70pt} 	\begin{pmatrix}
		y_{1}
		\begin{pmatrix}
			\alpha & -\beta \\
			a_{0}   & d
		\end{pmatrix}
		\begin{pmatrix}
			y_{0} & \\
			& 1
		\end{pmatrix}
		& \\
		& 1
	\end{pmatrix} \hspace{15pt}   (\bmod\  \R^{\times}).
\end{align*}
Thus, we are  left with a simple $2$-by-$2$ Iwasawa computation: 
\begin{align}\label{2by2Iwa}
\hspace{10pt} \begin{pmatrix}
	\alpha & -\beta \\
	a_{0}   & d
\end{pmatrix}
\begin{pmatrix}
	y_{0} & \\
	& 1
\end{pmatrix} \ \equiv \  \sqrt{d^{2}+(a_{0}y_{0})^2} \  \begin{pmatrix}
1 & \frac{\alpha}{a_{0}}- \frac{d/a_{0}}{d^2+(a_{0}y_{0})^2} \\
  & 1
\end{pmatrix}
\begin{pmatrix}
	\frac{y_{0}}{d^2+(a_{0}y_{0})^2} & \\
	 & 1
\end{pmatrix} \hspace{10pt} (\bmod\, SO(2)). 
\end{align}
 From (\ref{2by2Iwa}), the Iwasawa 	decomposition of\,  $\begin{psmallmatrix}
 	1 &            &         \\
 	& \alpha & \beta \\
 	& -a_{0} & d
 \end{psmallmatrix}
 \begin{psmallmatrix}
 	y_{0}y_{1}&  & \\
 	& y_{0} & \\
 	&           & 1
 \end{psmallmatrix} \ (\bmod\, O_{3}(\R) \, \R^{\times})$ can be readily obtained and it follows that 
\begin{align}\label{centralmaid}
\hspace{5pt} 	&	\left(\widehat{\Phi}\right)_{(a_{1}, \, a/d)}  \left[\begin{pmatrix}
		1 &            &         \\
		& \alpha & \beta \\
		& -a_{0} & d
	\end{pmatrix}
	\begin{pmatrix}
		y_{0}y_{1}&  & \\
		& y_{0} & \\
		&           & 1
	\end{pmatrix}\right] \nonumber\\[0.1in] 
	& \hspace{70pt}  \ \ = \  \  	\left(\widehat{\Phi}\right)_{(a_{1},\,  a/d)}\left[ \begin{pmatrix}
	1 &     &    \\
	& 1  & \frac{d/a_{0}}{d^2+(a_{0}y_{0})^2} - \frac{\alpha}{a_{0}}\\
	&     &    1
	\end{pmatrix} \begin{pmatrix}
	\frac{y_{0} y_{1}}{\sqrt{(a_{0}y_{0})^2+d^2}} &  &  \\
	& \frac{y_{0}}{(a_{0} y_{0})^2+d^2} &  \\
	&  & 1 \\
	\end{pmatrix}\right]. \nonumber\\
\end{align}

When $a_{1}=0$, we have 
\begin{align}
	 \hspace{5pt}	\left(\widehat{\Phi}\right)_{(a_{1},\,  a/d)}  \left[\begin{pmatrix}
		1 &            &         \\
		& \alpha & \beta \\
		& -a_{0} & d
	\end{pmatrix}
	\begin{pmatrix}
		y_{0}y_{1}&  & \\
		& y_{0} & \\
		&           & 1
	\end{pmatrix}\right] \ \ = \ \    &  	\left(\widehat{\Phi}\right)_{(0,\,  a/d)}\begin{pmatrix}
	\frac{y_{0} y_{1}}{\sqrt{(a_{0}y_{0})^2+d^2}} &  &  \\
	& \frac{y_{0}}{(a_{0} y_{0})^2+d^2} &  \\
	&  & 1 \\
\end{pmatrix} \nonumber\\
\end{align}
from  (\ref{centralmaid}) and   (\ref{unipotran}).  This  results in the second term on the right side of (\ref{incomexpform}). When  $a_{1}\in \Z-\{0\}$,  observe from (\ref{extrstr}) that 
\begin{align*}
	\frac{d/a_{0}}{d^2+(a_{0}y_{0})^2} \ - \ \frac{\alpha}{a_{0}} \ \equiv \    \frac{\overline{a_{0}}}{d} \ + \ 	\frac{d/a_{0}}{d^2+(a_{0}y_{0})^2}   -\frac{1}{a_{0}d} \ = \   \frac{\overline{a_{0}}}{d}  \ - \ \frac{1}{da_{0}}\, \frac{(a_{0}y_{0})^2}{d^2+(a_{0}y_{0})^2} \ \ (\bmod\, 1),
\end{align*}
and  (\ref{centralmaid}) can be rewritten as 
\begin{align*}
		\left(\widehat{\Phi}\right)_{(a_{1},\,  a/d)}\left[ \begin{pmatrix}
		1 &     &    \\
		& 1  &  \overline{a_{0}}/d\\
		&     &    1
	\end{pmatrix} 
	\, \cdot \,  
	\begin{pmatrix}
		1 &     &    \\
		& 1  &    -  \frac{1}{da_{0}}\, \frac{(a_{0}y_{0})^2}{d^2+(a_{0}y_{0})^2} \\ 
		&     &    1
	\end{pmatrix} 
	\begin{pmatrix}
		\frac{y_{0} y_{1}}{\sqrt{(a_{0}y_{0})^2+d^2}} &  &  \\
		& \frac{y_{0}}{(a_{0} y_{0})^2+d^2} &  \\
		&  & 1 \\
	\end{pmatrix}\right]. 
\end{align*}
We thus obtain the third term on the right side of (\ref{incomexpform}) by invoking  (\ref{unipotran}).   This concludes the proof of Corollary  \ref{incomexpf}. 

 \end{proof}


\section{Proof of Theorem \ref{twisdirc}}\label{twist}

 	\subsection{Unfolding and Clean-up}\label{inisect}
 	Let $\epsilon:= 1/100$.  On the vertical strip $  1+\theta/2 +\epsilon< \sigma < 4$, we begin by replacing the Poincar\'{e} series $P^{a}$ by its definition in the pairing
 \begin{align}\label{RSpair}
 	 2a^{-1/2}  \, \left(P^a, \ \mathbb{P}_{2}^{3} \Phi\cdot  |\det *|^{\overline{s}-\frac{1}{2}}\right)_{\Gamma_{2}\setminus GL_{2}(\R)}. 
 \end{align}
An unfolding shows that (\ref{RSpair}) is equal to 
  \begin{align}
	 2a^{-1/2}\, \int_{0}^{\infty}  \int_{0}^{\infty} \  h(ay_{1})\, (y_{0}^2y_{1})^{s-\frac{1}{2}} \,   \int_{0}^{1} \hspace{5pt}   \widetilde{\Phi}\left[
	\begin{pmatrix}
		1 & u_{1,2} &  \\
		& 1          & \\
		&             & 1
	\end{pmatrix} 
	\begin{pmatrix}
		y_{0}y_{1} &             &  \\
		& y_{0}    &   \\
		&             & 1
	\end{pmatrix}\right] e(au_{1,2}) \ du_{1,2} \ \frac{dy_{0}\, dy_{1}}{y_{0}y_{1}^2}. \label{alunf}
 \end{align}
From Corollary \ref{incomexpf} (replacing $\Phi$ therein by $\widetilde{\Phi}$ and noticing that   $(\widehat{\Phi})_{(0,a/d)} \equiv 0$ as $\Phi$ is a cusp form here), we obtain
\begin{align}\label{cleantwistwey}
	 2a^{-1/2}  \,  &\sum_{d \mid a} \ 
		\ \sum_{a_{1}\in \Z-\{0\}}  \    \int_{0}^{\infty} \int_{0}^{\infty} h(ay_{1}) \left(\widehat{\widetilde{\Phi}}\right)_{(a_{1}, a/d)} \begin{pmatrix}
			y_{0}y_{1}&  & \\
			& y_{0} & \\
			&           & 1
		\end{pmatrix} (y_{0}^2 y_{1})^{s-\frac{1}{2}} \ \frac{dy_{0} \, dy_{1}}{y_{0} y_{1}^2} \nonumber\\[0.1in]
&   \ + \  2a^{-1/2} \,  \sum_{d \mid a} \ 
\sum_{\substack{a_{0} \in \Z -\{0\} \\ \gcd(a_{0}, d)=1}} \ \sum_{a_{1} \in \Z-\{0\}} \	   \ \frac{\mathcal{B}_{\Phi}\left(a/d, a_{1}   \right)}{|a_{0}|^{2s-1} |a_{1}|}\ e\left(-\frac{a_{1}\overline{a_{0}}}{d}\right) \nonumber\\[0.1in] 
&\hspace{50pt} \cdot  \,  \int_{0}^{\infty} \int_{0}^{\infty} h(ay_{1}) \, W_{-\alpha(\Phi)}\left( \frac{|a_{1}|y_{0}}{(a_{0} y_{0})^2+d^2}, \ \frac{ay_{1}}{d}\sqrt{(a_{0} y_{0})^2+d^2} \right) \nonumber\\[0.1in] 
&\hspace{200pt} \cdot \, e\left(\frac{1}{da_{0}}\frac{(a_{0}y_{0})^2}{d^2+(a_{0}y_{0})^2}\right)  (y_{0}^2 y_{1})^{s-\frac{1}{2}}  \ \ \frac{dy_{0}\, dy_{1}}{y_{0} y_{1}^2} 
\end{align}
and the absolute convergence of (\ref{cleantwistwey}) follows from Proposition 6.5 of \cite{Kw23}.

 Denote by $D_{\Phi}^{(a)}(s)$ the expression on the first line of (\ref{cleantwistwey})  and  $OD_{\Phi}^{(a)}(s)$ the expression spanning the last three lines of (\ref{cleantwistwey}). In other words,
 \begin{align}\label{diagoffsplt}
 	2a^{-1/2}  \, \left(P^a, \ \mathbb{P}_{2}^{3} \Phi\cdot  |\det *|^{\overline{s}-\frac{1}{2}}\right)_{\Gamma_{2}\setminus GL_{2}(\R)}  \ \ =  \ \  D_{\Phi}^{(a)}(s) \ + \   OD_{\Phi}^{(a)}(s).
 \end{align}
 By the changes of variables $y_{0}\to |a_{1}|^{-1} y_{0}$ and $y_{1}\to a^{-1} y_{1}$, we have
 \begin{align}\label{twisdiag}
 	D_{\Phi}^{(a)}(s) \ = \ 2a^{-s}	\sum_{a_{1}\in \Z-\{0\}} \frac{\mathcal{B}_{\Phi}(a, a_{1})}{|a_{1} |^{2s}}  \cdot  \int_{0}^{\infty} \int_{0}^{\infty} h(y_{1})\,    (y_{0}^2 y_{1})^{s-\frac{1}{2}} \, W_{-\alpha(\Phi)}\left(y_{0},  y_{1}\right) \,   \frac{dy_{0}\, dy_{1}}{y_{0}y_{1}^2}, 
 \end{align}
where the double integral can be computed  by \eqref{eqn sta1} and (\ref{invers}). For $OD_{\Phi}^{(a)}(s)$, we apply another set of  changes of variables $y_{0}\to (d|a_{0}|^{-1}) y_{0}$ and $y_{1}\to a^{-1} y_{1}$, it follows at once 
\begin{align}\label{cleanup}
	OD_{\Phi}^{(a)}(s) \ \ = \ \  &  2a^{-s} \  \sum_{d \mid a} \  d^{2s}
	\sum_{\substack{a_{0} \in \Z-\{0\}  \\ \gcd(a_{0}, d)=1}} \  \sum_{a_{1} \in \Z-\{0\}}  \ \frac{\mathcal{B}_{\Phi}\left(a/d, a_{1}   \right)}{|a_{0}|^{2s-1} |a_{1}|}\,  e\left(-\frac{a_{1}\ovA{a_{0}}}{d}\right) \nonumber\\
	&\hspace{90pt}\cdot \ \int_{0}^{\infty} \int_{0}^{\infty} h(y_{1})\,  (y_{0}^2 y_{1})^{s-\frac{1}{2}}  \, e\left( \frac{a_{1}}{da_{0}}\, \frac{y_{0}^2}{1+y_{0}^2}\right)   \ \nonumber\\[0.1in]
	& \hspace{165pt} \cdot W_{-\alpha(\Phi)} \left(\left|\frac{a_{1}}{da_{0}}\right|\, \frac{ y_{0}}{ 1+y_{0}^2}, \ y_{1}\sqrt{1+y_{0}^2}   \right)   \  \frac{dy_{0} \, dy_{1}}{y_{0}y_{1}^2}. \nonumber\\
\end{align}

Following the same argument presented in Proposition 7.2 of  \cite{Kw23}, i.e.,  applying the Mellin inversion formulae for $e(\cdots)$ and $W_{-\alpha(\Phi)}(\cdots)$ for separation of variables, we obtain
\begin{align}\label{prop72kw23}
	OD_{\Phi}^{(a)}(s;\,  \phi) \ \ = \ \    2a^{-s} \cdot \frac{1}{8} \cdot \,  \int_{(1+\theta+2\epsilon)} \   \sum_{\pm} \ & \sum_{d \mid a} \  d^{2s+s_{0}-1}
	 \ \sum_{\substack{a_{0}, a_{1} \in \Z-\{0\}  \\ \gcd(a_{0}, d)=1 \\ \sgn(a_{0}a_{1})= \pm }} \    \ \frac{\mathcal{B}_{\Phi}\left(a/d, a_{1}   \right)}{|a_{0}|^{2s-s_{0}} |a_{1}|^{s_{0}}}\,  e\left(-\frac{a_{1}\ovA{a_{0}}}{d}\right) \nonumber\\
	 &\hspace{30pt} \cdot\,   (\mathcal{F}_{\Phi}^{(\pm)}H)\left(s_{0}, \, s; \,  \phi\right) \ \frac{ds_{0}}{2\pi i},
\end{align}
where the integral transform $\mathcal{F}_{\Phi}^{(\pm)}H$ was  defined in Section  \ref{Stirl}, and 
\begin{align}
		\lim_{\phi \to \pi/2 } \ OD_{\Phi}^{(a)}(s;\, \phi) \ \ = \  \ OD_{\Phi}^{(a)}(s)
\end{align}
holds on the domain $(3+\theta)/2< \sigma < 4$ (see Proposition 7.3 of \cite{Kw23}). For  the readers' convenience, more details will be indicated  in Section \ref{suppInt}.  Note: the constant $1/8$ in (\ref{prop72kw23}) comes from the constants $1/4$ and $1/2$ from Vinogradov-Takhtadzhyan's formula and  Euler's beta integral formula respectively. 

However, one must beware of  the signs of $a_{0}a_{1}$ due to the extra factor $e(-a_{1}\overline{a_{0}}/d)$ present in this article (but not in  \cite{Kw23, Kw23a+}), which will in turn influence the shapes of the dual moments, see  (\ref{expinmul}) and  (\ref{eisdualmo}). Consider the double Dirichlet series
\begin{align}\label{specLser}
	\mathcal{L}^{(a)}_{\pm}\left(s_{0}, s; \Phi \right) \ \ &:= \  \  \sum_{d \mid a} \  d^{2s+s_{0}-1}
	\sum_{\substack{a_{0} \ge 1 \\ \gcd(a_{0}, d)=1}}  \ \sum_{a_{1}\ge 1 }  \ \frac{\mathcal{B}_{\Phi}\left( a/d, a_{1}   \right)}{(a_{0})^{2s-s_{0}} (a_{1})^{s_{0}}}\  e\left(\mp \frac{a_{1}\ovA{a_{0}}}{d}\right),
\end{align}
which converges absolutely on the region  
\begin{align}\label{regAC}
	\re (2s-s_{0}) \ > \ 1 \hspace{20pt} \text{ and  } \hspace{20pt}\re s_{0} \ > \ 1+\theta. 
\end{align}
 Using the fact that $\Phi$ is an even form, we readily observe  that
\begin{align}
	OD_{\Phi}^{(a)}(s;\, \phi) \ \ = \ \   &\frac{a^{-s}}{2} \  \sum_{\pm} \  \int_{(1+\theta+2\epsilon)} \  	\mathcal{L}^{(a)}_{\pm}\left(s_{0}, s; \Phi \right)   (\mathcal{F}_{\Phi}^{(\pm)}H)\left(s_{0}, \, s; \,  \phi\right) \ \frac{ds_{0}}{2\pi i}. 
\end{align}
for $ 1+ \theta/2  +\epsilon< \sigma < 4$  and   $\phi\in (0, \pi/2)$.


 \subsection{Analytic Continuation}
We will make use of Proposition \ref{simpvor} to obtain  the analytic properties of  $\mathcal{L}_{\pm}^{(a)}\left(s_{0}, s; \Phi\right)$.

\begin{prop}\label{anacondtwi}
	The double Dirichlet series $\mathcal{L}_{\pm}^{(a)}(s_{0},s; \Phi)$ admits a holomorphic   continuation to $\C^2$  except on the polar divisor $2s-s_{0}=1$.  
\end{prop}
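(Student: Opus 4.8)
The plan is to evaluate the triple Dirichlet series (\ref{specLser}) in closed form as the finite combination (\ref{duamoadd}) of Hurwitz zeta-functions and additively-twisted $L$-functions of $\Phi$, valid first on the absolute-convergence region (\ref{regAC}), and then to read off the meromorphic continuation directly from Proposition \ref{simpvor}. So the whole content is: (i) carry out the arithmetic rearrangement (\ref{specLser}) $\rightsquigarrow$ (\ref{duamoadd}); (ii) apply Proposition \ref{simpvor}.

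\textbf{Step 1 (split $a_0$ into residue classes mod $d$).} In (\ref{specLser}) only integers $a_0$ coprime to $d$ occur, so I would write $a_0 \equiv \ell \pmod d$ with $\ell$ ranging over a set of reduced residues modulo $d$; then $\overline{a_0}\equiv\overline{\ell}\pmod d$, so the twist $e(\mp a_1\overline{a_0}/d)$ depends on $a_0$ only through $\ell$, and
\[
\sum_{\substack{a_0\ge 1\\ a_0\equiv \ell \,(\bmod\, d)}} a_0^{-(2s-s_0)} \ = \ d^{-(2s-s_0)}\,\zeta\!\left(2s-s_0,\tfrac{\ell}{d}\right)
\]
by the definition (\ref{hurw}). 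This produces exactly the factor $d^{2s+s_0-1}\cdot d^{-(2s-s_0)} = d^{2s_0-1}$ occurring in (\ref{duamoadd}).

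\textbf{Step 2 (factor the $GL(3)$ coefficient and recognize the additive twist).} Inserting the Hecke factorization (\ref{splGL3Hec}) of $\mathcal{B}_\Phi(a/d,a_1)$ and writing $a_1 = rm$ with $r\mid a/d$, the remaining sum over $m$ becomes $\sum_{m\ge1}\lambda_\Phi(m)m^{-s_0}e(\mp mr\overline{\ell}/d) = L(s_0,\mp r\overline{\ell}/d;\Phi)$ by (\ref{voro}). Collecting the pieces and using $\sum_{d\mid a}\sum_{r\mid a/d}=\sum_{dr\mid a}$ gives precisely the expression (\ref{duamoadd}). All interchanges of summation here are legitimate throughout (\ref{regAC}): the $a_1$-series converges absolutely there because $\mathcal{B}_\Phi(a/d,a_1)\ll_{a,\epsilon}a_1^{\theta+\epsilon}$ (from (\ref{splGL3Hec}) and the exponent $\theta$ towards Ramanujan), while for fixed $\ell$ the $a_0$-series over the arithmetic progression converges absolutely for $\re(2s-s_0)>1$.

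\textbf{Step 3 (continuation).} Now (\ref{duamoadd}) exhibits $\mathcal{L}^{(a)}_\pm(s_0,s;\Phi)$ as a \emph{finite} linear combination (indexed by $dr\mid a$ and by the reduced residues $\ell\ (\bmod\, d)$) of terms $d^{2s_0-1}\,\zeta(2s-s_0,\ell/d)\,L(s_0,\mp r\overline{\ell}/d;\Phi)$. By Proposition \ref{simpvor}(1), each $\zeta(2s-s_0,\ell/d)$ continues holomorphically to $\{2s-s_0\ne1\}$ with only a simple pole at $2s-s_0=1$ of residue $1$; by Proposition \ref{simpvor}(2) each $L(s_0,\mp r\overline{\ell}/d;\Phi)$ is entire — here one uses that $\Phi$ is cuspidal — and $d^{2s_0-1}$ is entire. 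Since a product of a function holomorphic in $s_0$ and a function holomorphic in $2s-s_0$ is jointly holomorphic on the corresponding open subset of $\C^2$, the finite sum is holomorphic on $\C^2\setminus\{2s-s_0=1\}$, which is the assertion. The only step I expect to require genuine care is the justification of the rearrangements in Steps 1--2: one must confirm absolute convergence of every intermediate expression uniformly on (\ref{regAC}), for which the convergence ranges and polynomial-growth bounds recorded in Proposition \ref{simpvor}, together with the divisor bound on the $GL(3)$ Hecke eigenvalues, suffice. (That the divisor $2s-s_0=1$ is genuinely polar — the feature exploited for the off-diagonal main term — follows at once: since all the $\zeta(2s-s_0,\ell/d)$ share the residue $1$, the residue of $\mathcal{L}^{(a)}_\pm$ along $2s-s_0=1$ is the generically nonzero finite sum $\sum_{dr\mid a}d^{2s_0-1}r^{-s_0}\mu(r)\overline{\lambda_\Phi}(a/(dr))\sideset{}{^*}{\sum}_{\ell\ (\bmod\, d)}L(s_0,\mp r\overline{\ell}/d;\Phi)$.)
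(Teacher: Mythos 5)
Your proposal is correct and follows essentially the same route as the paper: split the $a_0$-sum into reduced residue classes modulo $d$ to produce Hurwitz zeta-functions via (\ref{congHurw}), apply the Hecke factorization (\ref{splGL3Hec}) with the substitution $a_1=rm$ to arrive exactly at (\ref{twistDS}), and then invoke Proposition \ref{simpvor} term by term; the residue computation in your closing parenthetical likewise matches (\ref{restwisDS}). The only difference is that you spell out the absolute-convergence justifications on (\ref{regAC}) more explicitly than the paper does.
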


\begin{proof}
Suppose  $\re (2s-s_{0})>1$ and $\re s_{0}> 1+\theta$. The double Dirichlet series (\ref{specLser}) converges  absolutely and can be written as
\begin{align}
	\mathcal{L}_{\pm}^{(a)}\left(s_{0}, s ; \Phi\right) \ &= \  
	\ \sum_{d \mid a} \  d^{2s+s_{0}-1} \ \sideset{}{^*}{\sum}_{\ell \ (\bmod\, d)} \ 
	\sum_{\substack{a_{0} \ge 1 \\ a_{0} \ \equiv \ \ell \ (\bmod\, d)}}  \ \sum_{a_{1}\ge 1}  \ \frac{\mathcal{B}_{\Phi}\left( a/d, a_{1}   \right)}{(a_{0})^{2s-s_{0}} (a_{1})^{s_{0}}} \   e\left( \mp \frac{a_{1}\ovA{\ell}}{d}\right)
\end{align}
upon splitting the $a_{0}$-sum into  residue classes $(\bmod\, d)$.  Since
\begin{align}\label{congHurw}
	\sum_{\substack{a_{0} \ge 1 \\ a_{0} \ \equiv \  \ell \ (\bmod\, d)}}  \frac{1}{a_{0}^{2s-s_{0}}} \ &= \  d^{-(2s-s_{0})}\cdot  \zeta\left( 2s-s_{0}, \frac{\ell}{d}\right),
\end{align}
we have
\begin{align}\label{prevorofor}
	\mathcal{L}_{\pm}^{(a)}\left(s_{0}, s; \Phi \right) \ \ = \ \   \sum_{d \mid a} \  d^{2s_{0}-1} \ \sideset{}{^*}{\sum}_{\ell \ (\bmod\, d)} \  \zeta\left( 2s-s_{0}, \frac{\ell}{d}\right)  \left( \sum_{a_{1}=1}^{\infty}  \ \frac{\mathcal{B}_{\Phi}\left( a/d,  a_{1}   \right)}{ a_{1}^{s_{0}}}\  e\left(\mp \frac{a_{1}\ovA{\ell}}{d}\right) \right). 
\end{align}
The Hecke relation  (\ref{splGL3Hec}) implies that
 \begin{align}\label{twistDS}
 	\mathcal{L}^{(a)}_{\pm}\left(s_{0}, s; \Phi \right)  \ \ = \ \    \sum_{dr \mid a} \   \frac{d^{2s_{0}-1} \mu(r)}{ r^{s_{0}} }\ \overline{\lambda_{\Phi}}\left( \frac{a}{dr}\right)  \  \sideset{}{^*}{\sum}_{\ell \ (\bmod\, d)} \  \zeta\left( 2s-s_{0}, \frac{\ell}{d}\right) 	L\left( s_{0}; \mp \frac{r\overline{\ell}}{d}; \Phi\right)
 \end{align}
and its analytic continuation now follows  from Proposition  \ref{simpvor}. Also, 
\begin{align}\label{restwisDS}
- \ 	\Res_{s_{0}=2s-1} \  \mathcal{L}_{\pm}^{(a)}(s_{0}, s; \Phi) \ \  =  \ \  \sum_{dr \mid a} \   \frac{d^{4s-3} \mu(r)}{ r^{2s-1} }\ \overline{\lambda_{\Phi}}\left( \frac{a}{dr}\right)  \ \  \sideset{}{^*}{\sum}_{\ell \ (\bmod\, d)} \ 	L\left(2s-1; \mp \frac{r\overline{\ell}}{d}; \Phi\right)
 \end{align}
which  clearly admits an entire continuation.  This finishes the proof. 
\end{proof}

Denote by $\mathcal{R}_{\pm}^{(a)}(s; \Phi)$ the expression on the right side of (\ref{restwisDS}).   By Corollary \ref{anacondtwi} and repeating the argument described in Section 9 of \cite{Kw23} (with the exact same  analysis at the archimedean place, see Section \ref{Stirl}), we arrive at
\begin{prop}\label{repaCarg}
	On $1/4+\epsilon/2< \sigma < 3/4$, we have
	\begin{align}
	\hspace{30pt} 	OD_{\Phi}^{(a)}(s) \ \ = \ \   &\frac{a^{-s}}{2} \  \sum_{\pm} \  \int_{(1/2)} \  	\mathcal{L}^{(a)}_{\pm}\left(s_{0}, s; \Phi \right)   (\mathcal{F}_{\Phi}^{(\pm)}H)\left(s_{0}, \, s\right) \ \frac{ds_{0}}{2\pi i} \nonumber\\
		&\hspace{120pt} \ + \  \frac{a^{-s}}{2}  \ \sum_{\pm} \  \  \mathcal{R}_{\pm}^{(a)}(s; \Phi) (\mathcal{F}_{\Phi}^{(\pm)} H)\left(2s-1,\,   s\right). 
	\end{align}
\end{prop}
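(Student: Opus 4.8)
The plan is to start from the formula for $OD_{\Phi}^{(a)}(s;\,\phi)$ established at the end of Section~\ref{inisect}, namely
\begin{align*}
OD_{\Phi}^{(a)}(s;\, \phi) \ = \ \frac{a^{-s}}{2} \ \sum_{\pm} \ \int_{(1+\theta+2\epsilon)} \ \mathcal{L}^{(a)}_{\pm}\left(s_{0}, s; \Phi \right) \ (\mathcal{F}_{\Phi}^{(\pm)}H)\left(s_{0}, \, s; \,  \phi\right) \ \frac{ds_{0}}{2\pi i},
\end{align*}
valid for $1+\theta/2+\epsilon<\sigma<4$ and $\phi\in(0,\pi/2)$, and then to push the $s_0$-contour from $\re s_0 = 1+\theta+2\epsilon$ down to $\re s_0 = 1/2$, simultaneously relaxing the constraint on $\sigma$ down to the window $1/4+\epsilon/2<\sigma<3/4$. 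This is exactly the move carried out in Section~9 of \cite{Kw23} for the untwisted case, so the bulk of the argument is a citation; I only need to check that the two new features here — the presence of the arithmetic twist $e(\mp a_1\overline{\ell}/d)$ inside $\mathcal{L}^{(a)}_{\pm}$, and the consequent pole structure — do not disrupt that argument.

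First I would record the inputs. The integral transform $(\mathcal{F}_{\Phi}^{(\pm)}H)(s_0,s)$ is holomorphic and of rapid (indeed exponential) decay in $\im s_0$ on the strip described in Proposition~\ref{anconpr}, in particular throughout $\epsilon<\re s_0$, $\re s<4$, $2\re s-\re s_0-\epsilon>0$; taking $\phi\to\pi/2$ is legitimate there by Proposition~7.3 of \cite{Kw23}. On the Dirichlet-series side, Proposition~\ref{anacondtwi} tells us $\mathcal{L}^{(a)}_{\pm}(s_0,s;\Phi)$ continues holomorphically to $\mathbb{C}^2$ away from the hyperplane $2s-s_0=1$, where it has a simple pole with residue $-\mathcal{R}^{(a)}_{\pm}(s;\Phi)$ (this is (\ref{restwisDS}) and the definition of $\mathcal{R}^{(a)}_{\pm}$); moreover (\ref{twistDS}) expresses $\mathcal{L}^{(a)}_{\pm}$ as a finite sum of products of Hurwitz zeta values and additively-twisted $L$-functions of $\Phi$, each of which has at most polynomial growth in vertical strips by Proposition~\ref{simpvor}. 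Together with the exponential decay of $\mathcal{F}_{\Phi}^{(\pm)}H$ this gives absolute convergence of every contour integral in play and justifies shifting the contour.

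The shift proceeds as follows. For $\sigma$ in the overlap region $1+\theta/2+\epsilon<\sigma<3/4$ — which is non-empty since $\theta\le 1/2-1/10$ forces $1+\theta/2<1.2<3/4$... wait, that inequality is false, so one instead argues by analytic continuation in $s$: the left side $OD^{(a)}_{\Phi}(s)$ was shown in Section~\ref{inisect} to extend to $(3+\theta)/2<\sigma<4$, and one continues the identity step by step. Concretely, starting on $1+\theta+2\epsilon<\sigma<4$ where both sides converge, move the $s_0$-line leftward past $\re s_0 = 2\sigma-1$. The only singularity crossed is the simple pole of $\mathcal{L}^{(a)}_{\pm}$ at $s_0=2s-1$ (the factor $\mathcal{F}_{\Phi}^{(\pm)}H$ is holomorphic there as long as $2\sigma-(2\sigma-1)-\epsilon = 1-\epsilon>0$), contributing by the residue theorem the term $\frac{a^{-s}}{2}\sum_{\pm}\mathcal{R}^{(a)}_{\pm}(s;\Phi)(\mathcal{F}^{(\pm)}_{\Phi}H)(2s-1,s)$, with the sign matching because $\Res_{s_0=2s-1}\mathcal{L}^{(a)}_{\pm} = -\mathcal{R}^{(a)}_{\pm}$ and the contour is traversed clockwise around the pole. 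After the shift the remaining integral over $\re s_0 = 1/2$ converges absolutely and defines a holomorphic function of $s$ on the wider strip $1/4+\epsilon/2<\sigma<3/4$ (here one needs $2\sigma-1/2-\epsilon>0$, i.e. $\sigma>1/4+\epsilon/2$, and $\sigma<3/4$ to keep the pole $s_0=2s-1$ to the right of the new contour); the residue term is manifestly meromorphic in $s$ on the same range. By uniqueness of analytic continuation the resulting identity, which is precisely the claimed formula, holds throughout $1/4+\epsilon/2<\sigma<3/4$.

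The main obstacle is bookkeeping rather than conceptual: one must verify that the contour shift is valid — i.e. that the horizontal pieces of the truncated-rectangle contour vanish as the truncation height tends to infinity — which requires the exponential decay of $(\mathcal{F}_{\Phi}^{(\pm)}H)(s_0,s)$ in $\im s_0$ (Proposition~\ref{anconpr}(2), or the polynomial bound (\ref{refbdd}) combined with the polynomial growth of $\mathcal{L}^{(a)}_{\pm}$ from Proposition~\ref{simpvor}) to beat the polynomial growth of the twisted Dirichlet series uniformly on the strip $1/2\le\re s_0\le 1+\theta+2\epsilon$. The arithmetic twist $e(\mp a_1\overline{\ell}/d)$ enters only through the already-established analytic continuation of the additively-twisted $L$-function $L(s_0;\mp r\overline{\ell}/d;\Phi)$ in (\ref{twistDS}), so it does not create any new singularity or growth issue beyond what Proposition~\ref{simpvor} already handles; the only genuinely new point relative to \cite{Kw23} is tracking the residue with its correct sign, which is exactly the content of (\ref{restwisDS}).
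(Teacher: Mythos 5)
Your overall strategy is exactly the paper's: the paper disposes of this proposition in one line by citing the contour-shift argument of Section 9 of \cite{Kw23}, together with Proposition \ref{anacondtwi} for the continuation of $\mathcal{L}^{(a)}_{\pm}$ and the archimedean input of Section \ref{Stirl}. Your inventory of inputs (holomorphy and decay of $\mathcal{F}^{(\pm)}_{\Phi}H$ from Proposition \ref{anconpr}, polynomial growth of $\mathcal{L}^{(a)}_{\pm}$ via (\ref{twistDS}) and Proposition \ref{simpvor}, the limit $\phi\to\pi/2$) is complete, and your final identity is the correct one.

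However, the mechanism by which you produce the residue term is geometrically backwards, and as written it would give the wrong sign. In the region $\sigma>1+\theta/2+\epsilon$ where the starting identity holds, the pole of $\mathcal{L}^{(a)}_{\pm}(s_0,s;\Phi)$ sits at $s_0=2s-1$ with $\re(2s-1)=2\sigma-1>1+\theta+2\epsilon$, i.e.\ to the \emph{right} of the initial contour; moving the $s_0$-line leftward to $\re s_0=1/2$ therefore crosses no singularity at all, contrary to your claim that "the only singularity crossed is the simple pole at $s_0=2s-1$." The residue term materializes only afterwards, when you continue the resulting identity analytically in $s$: as $\sigma$ decreases past $3/4$, the pole $s_0=2s-1$ travels leftward through the fixed line $\re s_0=1/2$ (so for $\sigma<3/4$ it lies to the \emph{left} of the new contour, not "to the right" as you wrote), and the continuation of $\int_{(1/2)}$ across that crossing is $\int_{(1/2)}-\Res_{s_0=2s-1}=\int_{(1/2)}+\mathcal{R}^{(a)}_{\pm}(s;\Phi)(\mathcal{F}^{(\pm)}_{\Phi}H)(2s-1,s)$, which is the "$+$" sign in the proposition. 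Your appeal to a "clockwise" traversal happens to land on the right sign, but it contradicts your own description of shifting the contour leftward past the pole (that configuration encircles the pole counterclockwise and would yield $-\mathcal{R}^{(a)}_{\pm}$). The fix is simply to separate the two steps cleanly: first shift the contour with no pole crossed, then track the pole's motion in $s$; the rest of your write-up (including the half-finished self-correction about the non-overlapping $\sigma$-ranges, which should be cleaned up) then goes through.
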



\subsection{The Diagonal Main Term}
 	 
 	 \begin{lem}\label{hecdoub}
 	 	For $\re s \gg 1$, we have
 	 	\begin{align}\label{twisdiaDS}
 	 		\frac{1}{2} \ \sum_{a_{1} \in \Z-\{0\}} \ \frac{\mathcal{B}_{\Phi}(a, a_{1})}{|a_{1} |^{2s}} \ \ &= \   L(2s, \Phi)  \  \sum_{r\mid a}  \ \frac{\mu(r) \overline{\lambda_{\Phi}}(a/r)}{r^{2s}} \nonumber\\
 	 		\  \ &= \  \  L(2s, \Phi)  \ \prod_{p\mid a} \ \left\{   \overline{\lambda_{\Phi}}\left(p^{o_{p}(a)}\right) \ - \   \frac{\overline{\lambda_{\Phi}}\left(p^{o_{p}(a)-1}\right) }{p^{2s}} \right\}, 
 	 	\end{align}
  	where $o_{p}(a)$ denotes the power of $p$ in the prime factorization of $a$.  Thus,  the  Dirichlet series above  admits an entire continuation.

 	 \end{lem}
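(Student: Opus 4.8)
The plan is to prove Lemma \ref{hecdoub} by a direct computation starting from the Hecke relation \eqref{splGL3Hec}. First I would write
$$
\mathcal{B}_{\Phi}(a, a_{1}) \ = \ \sum_{r \mid (a, a_{1})} \ \mu(r)\, \overline{\lambda_{\Phi}}(a/r)\, \lambda_{\Phi}(a_{1}/r),
$$
substitute this into the sum over $a_{1}$, and interchange the order of summation. Since $\mathcal{B}_{\Phi}(a,a_1)$ depends only on $|a_1|$, the sum over $a_1 \in \Z - \{0\}$ is twice the sum over $a_1 \ge 1$, which kills the factor $1/2$. Writing $a_{1} = r b$ with $r \mid a$ and $b \ge 1$ arbitrary, the double sum factors as
$$
\sum_{r \mid a} \frac{\mu(r)\, \overline{\lambda_{\Phi}}(a/r)}{r^{2s}} \ \sum_{b=1}^{\infty} \frac{\lambda_{\Phi}(b)}{b^{2s}} \ = \ L(2s, \Phi) \ \sum_{r \mid a} \frac{\mu(r)\, \overline{\lambda_{\Phi}}(a/r)}{r^{2s}},
$$
using Definition \ref{JPSSstdL} (noting $\lambda_{\Phi}(b) = \mathcal{B}_{\Phi}(1,b)$) for the inner sum. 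This establishes the first equality in \eqref{twisdiaDS}, valid for $\re s \gg 1$ where all series converge absolutely.

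For the second equality, I would observe that the finite sum $\sum_{r \mid a} \mu(r)\, \overline{\lambda_{\Phi}}(a/r)\, r^{-2s}$ is a multiplicative function of $a$ (it is a Dirichlet convolution of the multiplicative functions $r \mapsto \mu(r) r^{-2s}$ and $\overline{\lambda_{\Phi}}$, both multiplicative since $\Phi$ is Hecke-normalized). Hence it factors over primes $p \mid a$, and at each such prime only the terms $r = 1$ and $r = p$ survive because $\mu$ vanishes on higher prime powers, giving the local factor $\overline{\lambda_{\Phi}}(p^{o_p(a)}) - p^{-2s}\, \overline{\lambda_{\Phi}}(p^{o_p(a)-1})$ as claimed.

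Finally, the entire continuation is immediate: the right-hand side of \eqref{twisdiaDS} is $L(2s, \Phi)$ — which is entire by Proposition \ref{globgl3func} — multiplied by a finite Dirichlet polynomial in $p^{-2s}$ over the primes dividing $a$, which is obviously entire. I do not anticipate a genuine obstacle here; the only point requiring minor care is the bookkeeping in the interchange of summation and the parametrization $a_1 = rb$, together with confirming absolute convergence in the region $\re s \gg 1$ so that the rearrangement is legitimate. The multiplicativity argument for the second equality is routine but should be stated cleanly since it is used to package the answer in the local form that the rest of the paper relies on.
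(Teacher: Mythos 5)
Your proof is correct and follows essentially the same route as the paper, which simply cites the Hecke relation \eqref{splGL3Hec} for the first equality and the multiplicativity of $\lambda_{\Phi}$, $\mu$ and their Dirichlet convolution for the second; you have merely written out the interchange of summation, the substitution $a_{1}=rb$, and the local factorization that the paper leaves implicit. The concluding remark on entire continuation (cuspidality of $\Phi$ plus a finite Dirichlet polynomial) matches the paper's intent as well.
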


 	 \begin{proof}
 	 	The first equality follows from (\ref{splGL3Hec}) and the second equality follows from the multiplicativity  of $\lambda_{\Phi}(\,\cdot\, )$, $\mu(\,\cdot\, )$, and their  multiplicative convolution. 
 	 \end{proof}

 	\begin{prop}\label{maintwisdiag}
 		On the domain  $1+ \theta/2 +\epsilon< \sigma < 4$, we have
 		 \begin{align}\label{twisdiaMT}
 			D_{\Phi}^{(a)}(s) \ = \ \frac{ a^{-s}}{2}  \ L(2s, \Phi) \   \prod_{p\mid a} \ & \left\{  \overline{\lambda_{\Phi}}\left(p^{o_{p}(a)}\right) \ - \   \frac{\overline{\lambda_{\Phi}}\left(p^{o_{p}(a)-1}\right) }{p^{2s}} \right\}	\nonumber\\
 		&	\hspace{40pt}\  \cdot \   \int_{(0)}  \frac{H(\mu)}{|\Gamma(\mu)|^2} \cdot \prod_{\pm} \ \prod_{i=1}^{3} \  \Gamma_{\R}\left(s\pm \mu-\alpha_{i}\right)\ \frac{d\mu}{2\pi i}. 
 		\end{align}
 	As a result,  $	D_{\Phi}^{(a)}(s) $  admits a holomorphic continuation to $0< \sigma < 4$. 
 	\end{prop}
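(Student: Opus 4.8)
The plan is to start from the explicit formula \eqref{twisdiag} for $D_{\Phi}^{(a)}(s)$, which already separates the arithmetic part (a Dirichlet series in $a_1$) from a fixed archimedean double integral. The arithmetic part is exactly the sum $\frac{1}{2}\sum_{a_1\in\Z-\{0\}}\mathcal{B}_\Phi(a,a_1)/|a_1|^{2s}$ treated in Lemma \ref{hecdoub}, so invoking that lemma immediately rewrites it as $L(2s,\Phi)\prod_{p\mid a}\{\overline{\lambda_\Phi}(p^{o_p(a)}) - \overline{\lambda_\Phi}(p^{o_p(a)-1})p^{-2s}\}$, which extends to an entire function. Thus the only remaining task is to evaluate the archimedean double integral
\[
\int_{0}^{\infty}\int_{0}^{\infty} h(y_1)\,(y_0^2y_1)^{s-\frac12}\,W_{-\alpha(\Phi)}(y_0,y_1)\,\frac{dy_0\,dy_1}{y_0y_1^2}.
\]

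First I would recognize this as the Rankin--Selberg integral of $GL_3(\R)\times GL_2(\R)$ with the $GL_2$ Whittaker function $W_\mu$ replaced by the test function $h$. The clean way to handle it is to substitute $h = H^\flat$ (valid since $H=h^\#$, see Proposition \ref{plancherel}), i.e. insert the spectral expansion \eqref{invers}: $h(y_1) = \frac{1}{4\pi i}\int_{(0)} H(\mu) W_\mu(y_1)\,d\mu/|\Gamma(\mu)|^2$. Interchanging the $\mu$-integral with the $(y_0,y_1)$-integrals — justified by the rapid decay of $H$ on $\mathcal{C}_\eta$ and the decay of $W_\mu$, $W_{-\alpha}$, after first restricting to $\re s$ large enough for absolute convergence — reduces the inner double integral over $(y_0,y_1)$ to precisely the integral computed in Proposition \ref{stadediff1}, namely $\frac14\prod_{\pm}\prod_{k=1}^3\Gamma_\R(s\pm\mu-\alpha_k)$. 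Combining the factor $\frac14$ with the $\frac{1}{4\pi i}$ and the overall $2a^{-s}$ from \eqref{twisdiag} yields the stated constant $\frac{a^{-s}}{2}$ and the Mellin--Barnes integral $\int_{(0)} \frac{H(\mu)}{|\Gamma(\mu)|^2}\prod_\pm\prod_{i=1}^3\Gamma_\R(s\pm\mu-\alpha_i)\,\frac{d\mu}{2\pi i}$ in \eqref{twisdiaMT}.

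For the analytic continuation to $0<\sigma<4$: the prefactor $L(2s,\Phi)\prod_{p\mid a}\{\cdots\}$ is entire by Lemma \ref{hecdoub} (the standard $L$-function of a cusp form is entire, and the finite Euler product is a polynomial in $p^{-2s}$), so it poses no issue. The $\mu$-integral $\int_{(0)} H(\mu)|\Gamma(\mu)|^{-2}\prod_\pm\prod_i\Gamma_\R(s\pm\mu-\alpha_i)\,d\mu/2\pi i$ defines a holomorphic function of $s$ on the region where, for every $i$ and both signs, the poles of $\Gamma_\R(s\pm\mu-\alpha_i)$ (located at $s\pm\mu-\alpha_i \in -2\N_0$, i.e. near $\re s = \re(\pm\mu+\alpha_i)$) stay off the contour $\re\mu=0$. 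Since the $\alpha_i$ are purely imaginary and $\mu$ runs over $i\R$, the first potential pole of $\Gamma_\R(s-\mu-\alpha_i)$ pinches the contour only when $\re s \le 0$, and similarly on the other side; hence the integral is holomorphic in the strip $\re s>0$, and with the extra room from $H$'s decay one in fact gets holomorphy on $-2\eta < \re s$ truncated by where $W_{-\alpha}$'s growth forces $\sigma<4$. So the product is holomorphic on $0<\sigma<4$, agreeing with $D_\Phi^{(a)}(s)$ on the overlap $1+\theta/2+\epsilon<\sigma<4$ by the identity theorem.

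The main obstacle is purely a matter of bookkeeping in the interchange of integrals: one must check that $\int_{(0)}\int_0^\infty\int_0^\infty |H(\mu)||\Gamma(\mu)|^{-2}|W_\mu(y_1)||W_{-\alpha}(y_0,y_1)|(y_0^2y_1)^{\sigma-1/2}\,dy_0\,dy_1\,|d\mu|/(y_0y_1^2)$ is finite for $\sigma$ in a nonempty range. This follows from the exponential decay $H(\mu)\ll e^{-2\pi|\mu|}$ combined with the polynomial-in-$|\mu|$ bounds on $|\Gamma(\mu)|^{-2}$ and on the Rankin--Selberg integral $\prod_\pm\prod_k\Gamma_\R(\sigma+it\pm\mu-\alpha_k)$ from Stirling — the same estimates already used in \cite{Kw23} — so no genuinely new difficulty arises; after establishing the identity for $\re s$ large it propagates to $0<\sigma<4$ by analytic continuation as above.
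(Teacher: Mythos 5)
Your proposal is correct and coincides with the paper's own (one-line) proof, which cites exactly the same ingredients: the decomposition \eqref{twisdiag}, the inversion \eqref{invers} to replace $h$ by $H^{\flat}$, the Rankin--Selberg evaluation \eqref{eqn sta1}, and Lemma \ref{hecdoub} for the Hecke Dirichlet series, with the constants combining as $2\cdot 2\cdot\tfrac14\cdot\tfrac{1}{4\pi i}=\tfrac12\cdot\tfrac{1}{2\pi i}$ to give the stated $\tfrac{a^{-s}}{2}$. Your continuation argument (entirety of $L(2s,\Phi)$ and the finite Euler product, plus holomorphy of the Mellin--Barnes factor for $\sigma>0$) is also the intended one.
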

 
 \begin{proof}
 	Follows from (\ref{twisdiag}),  (\ref{invers}), \eqref{eqn sta1} and Lemma \ref{hecdoub}. 
 \end{proof}


 	 \subsection{The Off-diagonal Main Term}\label{ODMt}
If $\Phi$ is a Maass cusp form of $SL_{3}(\Z)$, the  twisted recipe of \cite{CFKRS05} for the spectral  moment of  (\ref{dirimom})  is implied by the functional equation 
 	 \begin{align}
 	 	\Lambda(s, \, \phi\otimes \widetilde{\Phi}) \ = \   	\Lambda(1-s,\, \phi\otimes \Phi)
 	 \end{align}
 with  $\phi$ being an even Maass form of $SL_{2}(\Z)$, see Theorem 12.2.5 of \cite{Go15}.  Indeed,   the spectral side of (\ref{dirimom})  is  invariant under the replacements  $s \to 1-s$ and $\Phi \rightarrow \widetilde{\Phi}$. This observation together with the diagonal evaluation  (\ref{twisdiaMT}) predicts the existence of an off-diagonal  main term:
 	 \begin{align}\label{dualtwis}
 	 \hspace{15pt} 	\frac{a^{s-1}}{2}  \,    L\left( 2\, (1-s), \widetilde{\Phi}\right) & \,   \prod_{p\mid a} \ \left\{   \lambda_{\Phi}\left(p^{o_{p}(a)}\right) \ - \   \frac{\lambda_{\Phi}\left(p^{o_{p}(a)-1}\right) }{p^{2(1-s)}} \right\} \nonumber\\[0.1in]
 	 	& \hspace{60pt} \cdot \,  \int_{(0)} \ \frac{H(\mu)}{\left|\Gamma(\mu)\right|^2} \cdot \prod_{\pm} \ \prod_{i=1}^{3} \ \Gamma_{\R}\left(1-s \pm \mu + \alpha_{i}\right) \ \frac{d\mu}{2\pi i}. 
 	 \end{align}
 	 In other words, we must show that
  \begin{prop}\label{twismatchporp}
For $1/4 +\epsilon/2< \sigma < 3/4$, the expression
  \begin{align}\label{matchagainstwsi}
  	\frac{a^{-s}}{2} \   \sum_{\pm} \     \mathcal{R}_{\pm}^{(a)}(s; \Phi) \, (\mathcal{F}_{\Phi}^{(\pm)} H)\left(2s-1,  s\right)
  \end{align}
  is equal to   (\ref{dualtwis}). 
  \end{prop}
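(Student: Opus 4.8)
The plan is to prove Proposition~\ref{twismatchporp} by first computing the residual term $\mathcal{R}_{\pm}^{(a)}(s;\Phi)$ explicitly as a finite Euler product in a local fashion, then combining it with the archimedean factor $(\mathcal{F}_{\Phi}^{(\pm)}H)(2s-1,s)$ evaluated via Proposition~\ref{secMTcom}, and checking that the result matches (\ref{dualtwis}) term by term. First I would examine the expression
\[
\mathcal{R}_{\pm}^{(a)}(s;\Phi) \ = \ \sum_{dr\mid a} \ \frac{d^{4s-3}\mu(r)}{r^{2s-1}} \ \overline{\lambda_{\Phi}}\!\left(\frac{a}{dr}\right) \ \sideset{}{^*}{\sum}_{\ell\ (\bmod\, d)} \ L\!\left(2s-1; \mp\frac{r\overline{\ell}}{d}; \Phi\right).
\]
The inner sum over $\ell$ is a Ramanujan-type sum of additively twisted $L$-values. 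Expanding $L(2s-1;\mp r\overline{\ell}/d;\Phi)=\sum_{n\ge 1}\mathcal{B}_{\Phi}(1,n)n^{1-2s}e(\mp nr\overline{\ell}/d)$ and summing over $\ell\ (\bmod\, d)^{*}$ produces, after the standard Ramanujan-sum evaluation $\sum_{\ell\,(d)^{*}}e(\mp n r\overline{\ell}/d)=S(0,nr;d)=c_{d}(nr)$, a Dirichlet series in $n$ weighted by Ramanujan sums. This is precisely where the triple Dirichlet series (\ref{tripDS}) of the introduction intervenes: one recognizes $\sum_{a}a^{-w}\sum_{d\mid a}d^{4s-3}\sum_{a_{1}}\mathcal{B}_{\Phi}(a/d,a_{1})(a_{1})^{1-2s}S(0,a_{1};d)$ and uses the multiplicativity of all the arithmetic factors ($\lambda_{\Phi}$, $\mu$, $c_{d}$, $\mathcal{B}_{\Phi}$) to factor the whole thing as an Euler product over primes $p\mid a$.

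Next I would carry out the local computation at each prime $p\mid a$ with $o_{p}(a)=k\ge 1$. Here one has to disentangle the $GL(3)$ Hecke combinatorics: the contribution from the prime power $p^{k}$ involves $d=p^{j}$, $r=p^{i}$ with $i+j\le k$, the Ramanujan sum $c_{p^{j}}(p^{i}m)$ for $p\nmid m$, and the factor $\overline{\lambda_{\Phi}}(p^{k-i-j})$. Using the known formula $c_{p^{j}}(p^{i}m)$ (which is $\varphi(p^{j})$ if $j\le i$, $-p^{j-1}$ if $j=i+1$, and $0$ if $j>i+1$), together with the $GL(3)$ Hecke recursions governing $\lambda_{\Phi}(p^{\bullet})$ and the Euler factor of $L(2(1-s),\widetilde{\Phi})$, the local sum should telescope. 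The expected outcome of this step is that the $p$-local factor of $\mathcal{R}_{\pm}^{(a)}(s;\Phi)\cdot L(2-2s,\widetilde{\Phi})^{-1}$ collapses to exactly
\[
a^{2s-1}\left\{\lambda_{\Phi}\!\left(p^{o_{p}(a)}\right)-\frac{\lambda_{\Phi}\!\left(p^{o_{p}(a)-1}\right)}{p^{2(1-s)}}\right\}
\]
(up to the global power of $a$ redistributed across primes), so that after reassembling, $\mathcal{R}_{+}^{(a)}(s;\Phi)=\mathcal{R}_{-}^{(a)}(s;\Phi)$ equals $a^{2s-1}L(2-2s,\widetilde{\Phi})\prod_{p\mid a}\{\cdots\}$; the $\pm$ independence comes from $\Phi$ being even, since replacing $\mp$ by $\pm$ amounts to $\ell\mapsto -\ell$ in the coprime residue sum.

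Finally I would plug this closed form into (\ref{matchagainstwsi}). The archimedean piece is handled by Proposition~\ref{secMTcom}, which gives
\[
\sum_{\pm}(\mathcal{F}_{\Phi}^{(\pm)}H)(2s-1,s) \ = \ \pi^{\frac12-s}\prod_{i=1}^{3}\frac{\Gamma\!\left(s-\tfrac12+\tfrac{\alpha_{i}}{2}\right)}{\Gamma\!\left(1-s-\tfrac{\alpha_{i}}{2}\right)}\cdot\int_{(0)}\frac{H(\mu)}{|\Gamma(\mu)|^{2}}\prod_{i=1}^{3}\prod_{\pm}\Gamma\!\left(\tfrac{1-s+\alpha_{i}\pm\mu}{2}\right)\frac{d\mu}{2\pi i}.
\]
Since the Langlands parameters of $\widetilde{\Phi}$ are $-\alpha_{i}$, the $\Gamma$-integral here is precisely $\int_{(0)}H(\mu)|\Gamma(\mu)|^{-2}\prod_{\pm}\prod_{i}\Gamma_{\R}(1-s\pm\mu+\alpha_{i})\,\frac{d\mu}{2\pi i}$ after rewriting $\Gamma_{\R}$, matching the integral in (\ref{dualtwis}); the prefactor $\pi^{1/2-s}\prod_{i}\Gamma(s-\tfrac12+\tfrac{\alpha_{i}}{2})/\Gamma(1-s-\tfrac{\alpha_{i}}{2})$ is exactly the ratio of archimedean $L$-factors $L_{\infty}(2(1-s),\widetilde{\Phi})/L_{\infty}(2s,\Phi)$ dictated by the $GL(3)$ functional equation (Proposition~\ref{globgl3func}), which is what converts $L(2s,\Phi)$ on the diagonal side into $L(2(1-s),\widetilde{\Phi})$ on the off-diagonal side. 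Tracking the powers of $a$ (the $a^{-s}$ out front times the $a^{2s-1}$ from $\mathcal{R}$ gives $a^{s-1}$) and the factor of $2$ confirms the identity with (\ref{dualtwis}).

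The main obstacle I anticipate is the prime-by-prime evaluation in the second step: the interaction between the $GL(3)$ Hecke relation (\ref{splGL3Hec}) for $\mathcal{B}_{\Phi}(a/d,a_{1})$, the M\"obius factor $\mu(r)$, the Ramanujan sum $c_{d}(\cdot)$, and the Euler factors of both $L(s_{0},\Phi)$ and $L(2s-s_{0},\cdot)$ is genuinely intricate, and one must verify that all the "cross terms" (those with $0<i,j$ simultaneously, or with the Ramanujan sum in its $-p^{j-1}$ regime) cancel in just the right way to leave the clean two-term local factor. The cleanest route, following the philosophy of Section~\ref{ODMt} and the analogy with the Barnes-integral identities of \cite{Kw23a+}, is to not expand everything by brute force but to first identify the triple Dirichlet series (\ref{tripDS}) as a ratio of $GL(3)$ standard $L$-functions (exploiting that $\sum_{a_{1}}\mathcal{B}_{\Phi}(m,a_{1})a_{1}^{-s_{0}}$ and its Ramanujan-twisted variants have known closed forms) and then read off the residue at $2s-s_{0}=1$; this pushes all the combinatorics into a single Euler-product manipulation rather than a case analysis.
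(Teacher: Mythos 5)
Your recommended route is the paper's: the proof reduces to the archimedean identity of Proposition \ref{secMTcom} (quoted from \cite{Kw23}) together with the non-archimedean evaluation $\mathcal{R}_{\pm}^{(a)}(s;\Phi)=a^{2s-1}L(2s-1,\Phi)\prod_{p\mid a}\{\lambda_{\Phi}(p^{o_{p}(a)})-\lambda_{\Phi}(p^{o_{p}(a)-1})p^{-2(1-s)}\}$ (Proposition \ref{agreewithCFK}), and the latter is proved exactly as in your closing paragraph --- not prime by prime, but globally: one writes the residue via Ramanujan sums as in (\ref{resDS}), forms the generating Dirichlet series $\mathcal{C}(w,s;\Phi)=\sum_{a}\mathcal{R}^{(a)}(s;\Phi)a^{-w}$, sums the Ramanujan sums to a divisor function, recognizes a $GL(3)\times GL(2)$ Rankin--Selberg series with Eisenstein $GL(2)$ factor equal to $L(2s-1,\Phi)L(w+1-2s,\Phi)/\zeta(w-4s+3)$, and compares coefficients in $w$. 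Your instinct to abandon the local case analysis is vindicated by the paper itself, which displays the brute-force Euler expansion (\ref{badEPexp}) and states that it is already unclear how to push it through for square-free $a$ (it works only for $a$ prime); so your ``main obstacle'' is real and the global packaging is not merely cleaner but essentially necessary here. One bookkeeping correction: the residue term carries the finite $L$-value $L(2s-1,\Phi)$, not $L(2-2s,\widetilde{\Phi})$; these agree only after multiplying by the archimedean prefactor $\pi^{\frac12-s}\prod_{i}\Gamma(s-\frac12+\frac{\alpha_{i}}{2})/\Gamma(1-s-\frac{\alpha_{i}}{2})$ from Proposition \ref{secMTcom}, which is precisely where the $GL(3)$ functional equation $\Lambda(2s-1,\Phi)=\Lambda(2-2s,\widetilde{\Phi})$ enters to produce the $L(2(1-s),\widetilde{\Phi})$ appearing in (\ref{dualtwis}).
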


   The archimedean functional relation for\,  $  \sum_{\pm} \  (\mathcal{F}_{\Phi}^{(\pm)} H)\left(2s-1,  s\right)$\,  has been established in \cite{Kw23} before, see  Proposition \ref{secMTcom}.  Hence,  the conclusion for  Proposition \ref{twismatchporp}  would follow from the functional equation  of $GL(3)$  (Proposition \ref{globgl3func}), as well as:

  \begin{prop}\label{agreewithCFK}
  	For any $s\in \C$, we have
  	\begin{align}
  		\mathcal{R}_{\pm}^{(a)}(s; \Phi)  \ \ =  \ \  	a^{2s-1} \cdot L(2s-1, \Phi)\cdot    \prod_{p\mid a} \ \left\{  \lambda_{\Phi}(p^{o_{p}(a)}) \ - \   \frac{\lambda_{\Phi}(p^{o_{p}(a)-1})}{p^{2(1-s)}}  \right\}. 
  	\end{align}
  \end{prop}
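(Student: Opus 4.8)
The plan is to recognize $\mathcal{R}_{\pm}^{(a)}(s;\Phi)$, as given by the right side of (\ref{restwisDS}), as a finite arithmetic sum that packages the residue of the triple Dirichlet series (\ref{tripDS}) at its polar divisor $2s-s_0=1$, and then to evaluate this finite sum directly by exploiting the multiplicativity of all the ingredients. First I would recall from (\ref{restwisDS}) that
\begin{align}\label{Rexp}
\mathcal{R}_{\pm}^{(a)}(s;\Phi) \ = \ \sum_{dr\mid a} \ \frac{d^{4s-3}\mu(r)}{r^{2s-1}} \ \overline{\lambda_{\Phi}}\!\left(\frac{a}{dr}\right) \ \sideset{}{^*}{\sum}_{\ell\ (\bmod\, d)} \ L\!\left(2s-1;\ \mp\frac{r\overline{\ell}}{d};\ \Phi\right).
\end{align}
The first step is to simplify the inner sum over primitive residue classes $\ell\ (\bmod\, d)$. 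Writing out $L(2s-1;\mp r\overline{\ell}/d;\Phi)=\sum_{n\ge 1}\mathcal{B}_{\Phi}(1,n)\,n^{-(2s-1)}\,e(\mp n r\overline{\ell}/d)$ and summing over $\ell$ coprime to $d$, the $\ell$-sum collapses to a Ramanujan sum $S(0,\mp nr;d)=\sum_{e\mid (d,nr)}e\,\mu(d/e)$ (the object $S(0,a_1;d)$ appearing in (\ref{tripDS})); since $\gcd(r,d)=1$ on the support of (\ref{Rexp}) this is $\sum_{e\mid (d,n)}e\,\mu(d/e)$, independent of $r$. Thus the inner double sum becomes $\sum_{n\ge 1}\mathcal{B}_{\Phi}(1,n)\,n^{-(2s-1)}\sum_{e\mid(d,n)}e\,\mu(d/e)$. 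This is a purely multiplicative manipulation and should present no difficulty.

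The second, and main, step is the global Euler-product computation: I would show that the resulting expression factors as a product over primes $p\mid a$, and at each such prime one gets exactly the local factor $\lambda_{\Phi}(p^{o_p(a)}) - p^{-2(1-s)}\lambda_{\Phi}(p^{o_p(a)-1})$ times the contribution $a^{2s-1}L(2s-1,\Phi)$. Concretely, after the reduction above one has
\begin{align}\label{Rfactored}
\mathcal{R}_{\pm}^{(a)}(s;\Phi) \ = \ \sum_{n\ge 1}\frac{\mathcal{B}_{\Phi}(1,n)}{n^{2s-1}}\ \sum_{dr\mid a}\frac{d^{4s-3}\mu(r)}{r^{2s-1}}\ \overline{\lambda_{\Phi}}\!\left(\frac{a}{dr}\right)\sum_{e\mid(d,n)}e\,\mu(d/e),
\end{align}
and since $\overline{\lambda_{\Phi}}(a/dr)=\lambda_{\widetilde\Phi}(a/dr)$ one can split $n=n_1 n_2$ with $n_1\mid a^\infty$, $\gcd(n_2,a)=1$, use multiplicativity of $\mathcal{B}_{\Phi}(1,\cdot)=\lambda_{\Phi}(\cdot)$ to pull out $L(2s-1,\Phi)$ over primes $\nmid a$, and reduce the whole identity to a finite check at each prime $p\mid a$. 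At that prime, setting $k=o_p(a)$, one must verify the polynomial identity in $T=p^{-(2s-1)}$ (equivalently in $p^{-s}$)
\begin{align}\label{localcheck}
\sum_{i+j\le k}p^{i(4s-3)}\mu(p^j)\,\lambda_{\widetilde\Phi}(p^{k-i-j})\ \Bigl(\text{$p$-part of }\sum_{n\ge 0}\lambda_{\Phi}(p^n)p^{-n(2s-1)}\text{ with Ramanujan twist}\Bigr) \ = \ p^{k(2s-1)}\!\left(\lambda_{\Phi}(p^k)-p^{-2(1-s)}\lambda_{\Phi}(p^{k-1})\right)L_p(2s-1,\Phi),
\end{align}
which I would establish by substituting the Euler factor $L_p(2s-1,\Phi)=\prod_{i=1}^3(1-\alpha_{\Phi,i}(p)p^{-(2s-1)})^{-1}$ and the Hecke recursion (\ref{splGL3Hec}), and matching coefficients. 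The telescoping comes from the $j=0,1$ terms of the $\mu(p^j)$-sum, precisely as in the diagonal computation of Lemma \ref{hecdoub}.

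The hard part will be the bookkeeping in \eqref{localcheck}: keeping track of how the three nested sums (over $i$ from the $d^{4s-3}$ factor, over $j$ from the Möbius factor, over the Ramanujan-sum divisor $e$) interact with the $GL(3)$ Hecke relations, and confirming that all but the two advertised terms cancel. I expect this to reduce, after a change of summation order, to a finite geometric-series identity; the cleanest route is probably to package \eqref{Rfactored} as the residue at $2s-s_0=1$ of the triple Dirichlet series (\ref{tripDS}) and to evaluate that series in closed form as an Euler product first — computing its local factor at $p\mid a$ and at $p\nmid a$ separately — and only then take the residue, so that the pole structure ($\zeta(2s-s_0,\ell/d)$ contributing its simple pole with residue $1$) does the combinatorial work automatically. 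Once the closed form for (\ref{tripDS}) is in hand, Proposition \ref{agreewithCFK} is immediate, and combined with Proposition \ref{secMTcom} and the $GL(3)$ functional equation (Proposition \ref{globgl3func}) it yields Proposition \ref{twismatchporp}, completing the identification of the off-diagonal main term (\ref{dualtwis}).
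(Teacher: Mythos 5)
Your opening reduction is correct and matches the paper: taking the residue of (\ref{prevorofor}) at $s_{0}=2s-1$ collapses the $\ell$-sum to a Ramanujan sum, giving exactly the paper's expression (\ref{resDS}), $\mathcal{R}^{(a)}(s;\Phi)=\sum_{d\mid a}d^{4s-3}\sum_{a_{1}\ge 1}\mathcal{B}_{\Phi}(a/d,a_{1})a_{1}^{-(2s-1)}S(0,\mp a_{1};d)$, independent of the sign.

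Your primary route, however --- splitting $n$ into its $a$-part and the rest and verifying a local polynomial identity \eqref{localcheck} at each $p\mid a$ --- is precisely the approach the paper tries in (\ref{badEPexp}) and then abandons: the authors state that the resulting Euler-product expression (with the auxiliary parameters $\widetilde{e}=e/(r,e)$, $\widetilde{r}=r/(r,e)$) can be matched with the right-hand side when $a$ is prime, but that it is already unclear how to proceed for squarefree composite $a$. Your identity \eqref{localcheck} is left in schematic form (``$p$-part \ldots with Ramanujan twist''), and the ``bookkeeping'' you defer is exactly where the argument lives; as written this is not a proof.

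Your fallback is the paper's actual proof, and you correctly single out the triple Dirichlet series (\ref{tripDS}) as the object that does the combinatorial work. But your description of how to use it is garbled in two ways, and the decisive computation is missing. First, (\ref{tripDS}) \emph{is already} the residue --- the Ramanujan sum $S(0,a_{1};d)$ is what the simple pole of $\zeta(2s-s_{0},\ell/d)$ leaves behind --- so there is no residue left to take afterwards; and once one has summed over $a$ against $a^{-w}$ the dichotomy ``local factor at $p\mid a$ versus $p\nmid a$'' no longer makes sense. Second, the closed-form evaluation is the whole point: one sums the Ramanujan sum over $d$ via $\sum_{d}S(0,a_{1};d)d^{-(w-4s+3)}=\sigma_{-w+4s-2}(a_{1})/\zeta(w-4s+3)$, recognizes $\sigma_{-w+4s-2}(a_{1})$ as the Fourier coefficient of $E^{*}(*;\mu)$ with $\mu=-w/2+2s-1$, so that the remaining double sum is a $GL(3)\times GL(2)$ Rankin--Selberg series equal to $L(2s-1,\Phi)L(w+1-2s,\Phi)/\zeta(w-4s+3)$ by (\ref{RSEul}); expanding $L(w+1-2s,\Phi)/\zeta(w-4s+3)$ as a Dirichlet series in $w$ and comparing coefficients of $a^{-w}$ then yields the M\"obius convolution $a^{2s-1}\sum_{n\mid a}\lambda_{\Phi}(a/n)\mu(n)n^{-2(1-s)}$, which factors into the stated product over $p\mid a$. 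None of these steps appears in your proposal, so the gap is the entire evaluation of (\ref{tripDS}), not merely its final unpacking.
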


  This is the non-archimedean analogue of   Proposition \ref{secMTcom}. In principle, it should be a finite checking at the `ramified' places upon expanding $\mathcal{R}_{\pm}^{(a)}(s; \Phi)$  into an Euler product (as in  \cite{HY10, CIS19, BTB22+, HN22}). Unfortunately, our situation is even more complicated than the aforementioned works  ---  one has to match the right side of (\ref{agreewithCFK}) with the expression
  \begin{align}\label{badEPexp}
  	\mathcal{R}_{\pm}^{(a)}(s; \Phi) 
  	\ &= \   \sum_{der \mid a} \ \  \frac{\mu(r) \mu(d)}{d}  \left(\frac{d^{2}e  }{\widetilde{r}}\right)^{2s-1}  \overline{\lambda_{\Phi}}\left(\frac{a}{der}\right)   \ 
  	\sum_{\substack{a_{1}\ge 1 }}  \ \frac{ \lambda_{\Phi}(\widetilde{e}a_{1})}{ (a_{1})^{2s-1}} \nonumber\\
  	\ &= \ L(2s-1, \Phi)    \sum_{der \mid a} \ \  \frac{\mu(r) \mu(d)}{d}  \left(\frac{d^{2}e   }{\widetilde{r}}\right)^{2s-1}  \overline{\lambda_{\Phi}}\left(\frac{a}{der}\right)   \nonumber\\
  	&\hspace{30pt} \cdot  
  	\prod_{p \, \mid \, \widetilde{e}} \  \  \left(1-\lambda_{\Phi}(p)p^{-(2s-1)} +\overline{\lambda_{\Phi}}(p)p^{-2(2s-1)}-p^{-3(2s-1)}\right)\sum\limits_{m=0}^{\infty} \ \frac{\lambda_{\Phi}(p^{m+o_{p}(\widetilde{e})})}{(p^{m})^{2s-1}},
  \end{align}
  where  $\widetilde{e}:= e/(r,e)$ and $\widetilde{r}:= r/ (r,e)$.  Equation (\ref{badEPexp}) follows from   (\ref{restwisDS}), the  multiplicativity of the $GL(3)$ Hecke eigenvalues, and the Euler product identity
  \begin{align}\label{gl3EPid}
  	 L(s, \Phi) \ &= \  \prod_{p} \ \left(1-\lambda_{\Phi}(p)p^{-s} +\overline{\lambda_{\Phi}}(p)p^{-2s}-p^{-3s}\right)^{-1} \hspace{20pt} (\re s \ \gg \ 1)
  \end{align}
  for $L$-functions of $GL(3)$, see pp. 173-174 of \cite{Go15}.    When $a$ is prime, one can indeed show the desired agreement using (\ref{badEPexp}). However, it is already unclear how to proceed even if we assume  $a$ is square-free. Nevertheless,  we have found a simple proof that  works for any integer $a\ge 1$.

\begin{proof}[Proof of Proposition \ref{agreewithCFK}]
Suppose $\re s\gg 1$.  By Corollary \ref{anacondtwi}, we may evaluate $\mathcal{R}_{\pm}^{(a)}(s; \Phi) $ by taking the residue of (\ref{prevorofor}) at $s_{0}=2s-1$.   Switching the order of summation,  we find that the $\ell$-sum in (\ref{prevorofor}) can be identified with the \textit{Ramanujan sum} $ S(0, \mp a_{1};d)$. In other words,  
\begin{align}\label{resDS}
 \mathcal{R}_{\pm}^{(a)}(s; \Phi)  \ \ =  \ \ 	 \ \sum_{d \mid a} \  d^{4s-3} \
	\sum_{a_{1}\ge 1}  \ \frac{\mathcal{B}_{\Phi}\left( a/d, a_{1}   \right)}{ (a_{1})^{2s-1}} \cdot S(0, \mp a_{1};d),
\end{align}
 Note: the last expression is independent of the sign and we shall now drop `$\pm$' sign in our notation  $\mathcal{R}_{\pm}^{(a)}(s; \Phi)$ from now on. 

Suppose $\re w \gg 1$.  We define the triple Dirichlet series:
\begin{align}
 \mathcal{C}(w,s; \Phi) \ \ := \ \   \sum_{a=1}^{\infty} \ \frac{	\mathcal{R}^{(a)}(s; \Phi) }{a^{w}}   \ \ = \ \  \sum_{a,d, a_{1}=1}^{\infty} \  \frac{d^{4s-3}\mathcal{B}_{\Phi}(a, a_{1}) S(0,a_{1};d)}{(da)^{w}a_{1}^{2s-1}},
\end{align}
where the last expression follows from the change of variables $a\to da$. The $d$-sum can be readily computed by the well-known identity:
\begin{align}
  \sum_{d=1}^{\infty} \ \frac{S(0, a_{1};d)}{d^{w-4s+3}} \ \ = \ \   \frac{\sigma_{-w+4s-2}(a_{1})}{\zeta(w-4s+3)}.
\end{align}
  From (\ref{eisfournorm}), observe that $	\sigma_{-w+4s-2}(|a_{1}|)  =  	 |a_{1}|^{\mu} \cdot \mathcal{B}_{E^{*}(*;\,  \mu)}(a_{1})$ with $\mu:= -w/2+2s-1$.  As a result,  we recognize 
 \begin{align}
 	\mathcal{C}(w,s; \Phi) \ \ &= \ \   \frac{1}{\zeta(w-4s+3)} \ \ \sum_{a, a_{1}=1}^{\infty} \  \frac{ \mathcal{B}_{E^{*}(*;\, \mu)}(a_{1}) \mathcal{B}_{\Phi}(a, a_{1})}{(a^2 a_{1})^{w/2}} 
 \end{align}
as a double Dirichlet series of  $GL(3)\times GL(2)$  type, see (\ref{rankse}).  Next, by considering the Euler products  (\ref{3EUl})-(\ref{RSEul}) of $GL(3)$ and $GL(3)\times GL(2)$ types, we obtain 
 \begin{align}\label{genseries}
 		\mathcal{C}(w,s; \Phi)	\ = \   \frac{L(\frac{w}{2}+\mu, \Phi) L(\frac{w}{2}-\mu, \Phi)}{\zeta(w-4s+3)} 
 	\ = \  \frac{L(2s-1, \Phi)L(w+1-2s, \Phi)}{\zeta(w-4s+3)}. 
 \end{align}
Then we may  rewrite (\ref{genseries})  as a Dirichlet series in $w$ using the $GL(3)$ $L$-series  (\ref{gl3stdL}) with M\"obius inversion:
  \begin{align}
  	\mathcal{C}(w,s; \Phi)
  	\  &= \ L(2s-1, \Phi) \ \sum_{r=1}^{\infty} \ \frac{1}{r^{w}} \sum_{mn=r} \ \frac{\lambda_{\Phi}(m)\mu(n)}{m^{1-2s}n^{3-4s}}.
  \end{align}
Upon comparing coefficients,  we readily conclude that
  \begin{align}\label{conv}
  	\mathcal{R}^{(a)}(s;  \Phi) \ = \ L(2s-1, \Phi) \, \cdot\,     a^{2s-1}  \, \sum_{n\mid a} \ \frac{\lambda_{\Phi}(a/n)\mu(n)}{n^{2(1-s)}}
  \end{align}
and this holds for any $s\in \C$ by analytic continuation. The convolution of  multiplicative functions in (\ref{conv}) can be computed as
\begin{align}
 \sum_{n\mid a} \ \frac{\lambda_{\Phi}(a/n)\mu(n)}{n^{2(1-s)}}
	\ &= \       \prod_{p\mid a} \ \left\{  \lambda_{\Phi}(p^{o_{p}(a)}) \ - \   \frac{\lambda_{\Phi}(p^{o_{p}(a)-1})}{p^{2(1-s)}}  \right\}.
\end{align}
This completes the proof of Proposition \ref{agreewithCFK}.
\end{proof}

\subsection{Concluding Theorem \ref{twisdirc}}  Theorem \ref{twisdirc} follows from putting  Proposition \ref{comspec}, equation (\ref{diagoffsplt}), Proposition \ref{repaCarg}, Proposition \ref{maintwisdiag}, Proposition  \ref{twismatchporp} and equation (\ref{twistDS})  together.

 \subsection{Shape of the Dual Moment}\label{shapedualDiric}
In this section, we assume that  $a= p$ is a prime as in  \cite{Y11, BHKM20}. The connection between the twisted spectral moment  and the moment averages over Dirichlet characters is now a straight-forward consequence of the orthogonality relation thanks to the structures cast early on by our period integral construction (see Corollary \ref{incomexpf}). 

\begin{prop}\label{cuspdual}
	For any $(s_{0},s)\in \C^2$ except on the line $2s-s_{0}=1$, we have
	\begin{align}\label{expinmul}
		\mathcal{L}^{(p)}_{\pm}\left(s_{0}, s; \Phi \right)  \  \ =  \ \ 	& 	\mathcal{P}_{p}(s_{0}, s; \Phi) \,  \zeta(2s-s_{0}) \, L(s_{0}, \Phi) \nonumber\\
		& \hspace{20pt} \ + \  	\frac{ p^{ s_{0}+2s-1}}{\phi(p)}  \    \left( \ \  \sideset{}{^+}{\sum}_{\chi \ (\bmod\, p)} \ \  \mp \ \  \sideset{}{^-}{\sum}_{\chi \ (\bmod\, p)} \right) \ \tau(\chi) L(2s-s_{0}, \chi)  L\left(s_{0}, \Phi\otimes \overline{\chi}\right),
	\end{align}
	where   $\tau(\chi):= \sideset{}{^*}{\sum}\limits_{x \, (\bmod\, p)} \chi (x) e(x/p)$ is the Gauss sum of\, $\chi \, (\bmod\, p)$ and 
	\begin{align}
		\mathcal{P}_{p}(s_{0}, s; \Phi) \ \ := \ \  &\frac{p^{2s+s_{0}-1}}{\phi(p)} \, (1-p^{-(2s-s_{0})}) \left(-1 +\lambda_{\Phi}(p)p^{1-s_{0}}- \overline{\lambda_{\Phi}}(p) p^{1-2s_{0}}+p^{1-3s_{0}}\right)\nonumber\\
		 &\hspace{40pt}  \ + \ \overline{\lambda_{\Phi}}(p) \ -  \  p^{-s_{0}}.  
	\end{align}
\end{prop}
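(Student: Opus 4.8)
The plan is to specialise the closed form \eqref{twistDS} to $a=p$ and then insert the orthogonality of the Dirichlet characters modulo $p$ to decouple the two residues $\ell$ and $\overline{\ell}$ that occur in the Hurwitz zeta-value and in the additive twist. All the manipulations will be carried out in the region $\re(2s-s_{0})>1$, $\re s_{0}>1+\theta$ of absolute convergence, and the resulting identity then extends to $\C^{2}$ off the line $2s-s_{0}=1$ by Proposition~\ref{anacondtwi}. When $a=p$ the only divisor pairs contributing to \eqref{twistDS} are $(d,r)\in\{(1,1),(1,p),(p,1)\}$; the first two produce integral (hence trivial) additive twists and $\zeta(\,\cdot\,,1)=\zeta(\,\cdot\,)$, so \eqref{twistDS} collapses to
\[
\mathcal{L}^{(p)}_{\pm}(s_{0},s;\Phi)=\bigl(\overline{\lambda_{\Phi}}(p)-p^{-s_{0}}\bigr)\,\zeta(2s-s_{0})\,L(s_{0},\Phi)+p^{2s_{0}-1}\sum_{\ell=1}^{p-1}\zeta\!\Bigl(2s-s_{0},\tfrac{\ell}{p}\Bigr)L\!\Bigl(s_{0};\mp\tfrac{\overline{\ell}}{p};\Phi\Bigr).
\]

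For the residual sum I would write $\zeta(2s-s_{0},\ell/p)=p^{\,2s-s_{0}}\sum_{m\ge 1,\ m\equiv\ell\,(p)}m^{-(2s-s_{0})}$ via \eqref{hurw}, turning the sum over residues $\ell$ into a sum over integers $m$ coprime to $p$ with $\overline{\ell}=\overline{m}$, and expand $L(s_{0};\mp\overline{m}/p;\Phi)=\sum_{k\ge1}\lambda_{\Phi}(k)k^{-s_{0}}e(\mp k\overline{m}/p)$ via \eqref{voro}. Splitting the $k$-sum at $p\mid k$: for $p\mid k$ the exponential is trivial, and the local factor at $p$ of \eqref{gl3EPid} gives $\sum_{p\mid k}\lambda_{\Phi}(k)k^{-s_{0}}=L(s_{0},\Phi)\,A_{p}$ with $A_{p}:=\lambda_{\Phi}(p)p^{-s_{0}}-\overline{\lambda_{\Phi}}(p)p^{-2s_{0}}+p^{-3s_{0}}$, while $\sum_{p\nmid m}m^{-(2s-s_{0})}=\zeta(2s-s_{0})(1-p^{-(2s-s_{0})})$; so this portion contributes another multiple of $\zeta(2s-s_{0})L(s_{0},\Phi)$. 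For $p\nmid k$ I would insert the Gauss-sum expansion $e(j/p)=\phi(p)^{-1}\sum_{\chi\,(\bmod\,p)}\overline{\chi}(j)\tau(\chi)$, valid for $(j,p)=1$, with $j=\mp k\overline{m}$; since $\overline{\chi}(\overline{m})=\chi(m)$ this separates the $m$- and $k$-sums into $L(2s-s_{0},\chi)$ and $L(s_{0},\Phi\otimes\overline{\chi})$ respectively (the Euler factor at $p$ of the latter being automatically trivial because $\chi(p)=0$), weighted by $\overline{\chi}(\mp1)=\chi(\mp1)$ and by the prefactor $p^{2s_{0}-1}\cdot p^{\,2s-s_{0}}/\phi(p)=p^{\,s_{0}+2s-1}/\phi(p)$.

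Collecting everything, $\mathcal{L}^{(p)}_{\pm}$ becomes a $\zeta(2s-s_{0})L(s_{0},\Phi)$-multiple plus $\tfrac{p^{s_{0}+2s-1}}{\phi(p)}\sum_{\chi\,(\bmod\,p)}\chi(\mp1)\tau(\chi)L(2s-s_{0},\chi)L(s_{0},\Phi\otimes\overline{\chi})$. I would then peel off the principal character $\chi_{0}$ from this last sum --- using $\tau(\chi_{0})=-1$, $L(2s-s_{0},\chi_{0})=\zeta(2s-s_{0})(1-p^{-(2s-s_{0})})$ and $L(s_{0},\Phi\otimes\overline{\chi_{0}})=L(s_{0},\Phi)(1-A_{p})$ --- and merge its contribution with the three $\zeta(2s-s_{0})L(s_{0},\Phi)$-multiples found above; the one-line simplification $A_{p}-\tfrac{1-A_{p}}{p-1}=\tfrac{pA_{p}-1}{p-1}$ (with $\phi(p)=p-1$ and $pA_{p}-1=-1+\lambda_{\Phi}(p)p^{1-s_{0}}-\overline{\lambda_{\Phi}}(p)p^{1-2s_{0}}+p^{1-3s_{0}}$) then identifies the total coefficient of $\zeta(2s-s_{0})L(s_{0},\Phi)$ with $\mathcal{P}_{p}(s_{0},s;\Phi)$, while the remaining (primitive) characters give the stated sum, since $\chi(\mp1)$ equals $+1$ on the even characters and $\mp1$ on the odd ones, which is exactly the combination $\sideset{}{^{+}}{\sum}\mp\sideset{}{^{-}}{\sum}$. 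The only real difficulty here is bookkeeping: carrying the $\mp$-sign faithfully through the Gauss-sum step so that it lands on $\chi(\mp1)$ and hence on the even/odd dichotomy, and correctly isolating the $\chi_{0}$-term so that it recombines into $\mathcal{P}_{p}$ rather than being left inside the character sum; no analytic input beyond absolute convergence and the continuation furnished by Proposition~\ref{anacondtwi} is needed.
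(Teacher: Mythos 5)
Your proposal is correct and follows essentially the same route as the paper: both split \eqref{twistDS} at $a=p$ into the $d=1$ and $d=p$ pieces, decouple $\ell$ and $\overline{\ell}$ via orthogonality of Dirichlet characters mod $p$ (equivalently, the Gauss-sum expansion of $e(j/p)$), and use the degree-three Euler factor \eqref{gl3EPid} at $p$ to fold the principal-character/diagonal contribution into $\mathcal{P}_{p}$. The only difference is bookkeeping order --- you split the additive $L$-series at $p\mid k$ before inserting characters, whereas the paper applies orthogonality to the Hurwitz zeta first and evaluates $\sideset{}{^*}{\sum}_{\ell}\overline{\chi}(\ell)e(\mp n\overline{\ell}/p)$ case by case --- and your sign tracking ($\chi(\mp 1)$ landing on the even/odd dichotomy) and the final simplification to $\mathcal{P}_{p}$ both check out.
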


\begin{proof}
	The computations below will be first  carried out on the region of absolute convergence (\ref{regAC}). Then  our desired conclusion will follow from analytic continuation.  In  (\ref{twistDS})  we either have $d=1$ or $d=p$,  and  correspondingly   $r\mid p$ or $r\mid 1$.

	\noindent \textbf{Case (1): $d=1$. }  We take $\ell=1$ in this case and then 
	\begin{align*}
		\mathcal{L}^{(p)}_{\pm}\left(s_{0}, s; \Phi \right) \bigg|_{d=1} \ \ = \ \  	&\ \sum_{r\in \{1,p\}} \ \frac{\mu(r)}{r^{s_{0}}}\cdot   \overline{\lambda_{\Phi}}(p/r) \cdot \zeta(2s-s_{0}) L\left(s_{0}, \mp r; \Phi\right) \nonumber\\
		\ \ =  \ \ &  \left( \,  \overline{\lambda_{\Phi}}(p)- p^{-s_{0}}\right) \zeta(2s-s_{0}) L\left(s_{0}, \Phi\right).
	\end{align*}

	\noindent \textbf{Case (2): $d=p$.} We have
	\begin{align}\label{case2}
		\mathcal{L}^{(p)}_{\pm}\left(s_{0}, s; \Phi\right) \bigg|_{d=p}  \ = \ 	p^{ 2s_{0}-1}  \  \   \sideset{}{^*}{\sum}_{\ell \ (\bmod\, p)} \  \zeta\left( 2s-s_{0}, \frac{\ell}{p}\right) 	L\left( s_{0}; \mp \frac{\overline{\ell}}{p}; \Phi\right). 
	\end{align}
The Hurwitz zeta function in (\ref{case2})   can be rewritten as  
	\begin{align*}
		\zeta\left( 2s-s_{0}, \frac{\ell}{p}\right) \  \ &=  \ \   \frac{p^{2s-s_{0}}}{\phi(p)} \ \sum_{\chi \ (\bmod\, p)} \ \overline{\chi}(\ell) L(2s-s_{0}, \chi)
	\end{align*}
	 with (\ref{congHurw}) and   the orthogonality relation for Dirichlet characters.  Hence, we have
	\begin{align}\label{afterorth}
			\mathcal{L}^{(p)}_{\pm}\left(s_{0}, s; \Phi\right) \bigg|_{d=p}  \ &= \ 	 \frac{p^{2s+s_{0}-1}}{\phi(p)}  \      \sum_{\chi \ (\bmod\, p)} \  L(2s-s_{0}, \chi)  \  \sideset{}{^*}{\sum}_{\ell \ (\bmod\, p)}  \ \overline{\chi}(\ell) L\left( s_{0}; \mp \frac{\overline{\ell}}{p}; \Phi\right) \nonumber\\
			\ &=\   	 \frac{p^{2s+s_{0}-1}}{\phi(p)}  \      \sum_{\chi \ (\bmod\, p)} \  L(2s-s_{0}, \chi)  \ \sum_{n=1}^{\infty} \ \frac{\lambda_{\Phi}(n)}{n^{s_{0}}} \  \sideset{}{^*}{\sum}_{\ell \ (\bmod\, p)}  \ \overline{\chi}(\ell) 	e\left(\mp \frac{n\bar{\ell}}{p}\right).
	\end{align}
	
	If $(n,p)=1$, then the replacement\,   $\ell \to \mp n\bar{\ell}$\, $(\bmod\, p)$ implies 
	\begin{align}\label{intogauss}
		\sideset{}{^*}{\sum}_{\ell \ (\bmod\, p)}  \ \overline{\chi}(\ell) 	e\left(\mp \frac{n\bar{\ell}}{p}\right) \ \ =  \ \  \overline{\chi}(\mp n) \tau(\chi). 
	\end{align}
 (When $\chi=\chi_{0}$, equation (\ref{intogauss}) is equal to $-1$.) If  $p\mid n$, then
	\begin{align*}
		\sideset{}{^*}{\sum}_{\ell \ (\bmod\, p)}  \ \overline{\chi}(\ell) 	e\left(\mp \frac{n\bar{\ell}}{p}\right) \ \ = \  \  	\sideset{}{^*}{\sum}_{\ell \ (\bmod\, p)}  \ \overline{\chi}(\ell)  \ \ = \ \  \phi(p)\cdot  1_{\chi=\chi_{0}}. 
	\end{align*}
	
	As a result, the contribution from $\chi=\chi_{0}$ in (\ref{afterorth}) is equal to 
	\begin{align}
			\mathcal{L}^{(p)}_{\pm}\left(s_{0}, s; \Phi\right) \bigg|_{d=p, \, \chi=\chi_{0}}   \ \ &= \  \   \frac{p^{2s+s_{0}-1}}{\phi(p)} \,  L(2s-s_{0}, \chi_{0})  \left\{ -\sum_{(n,p)=1} \ \frac{\lambda_{\Phi}(n)}{n^{s_{0}}}  \ + \ \phi(p) \, \sum_{p\mid n} \ \frac{\lambda_{\Phi}(n)}{n^{s_{0}}}   \right\} \nonumber\\
		\ \ &= \ \     \frac{p^{2s+s_{0}-1}}{\phi(p)} \,  L(2s-s_{0}, \chi_{0})  \left\{  \phi(p) L(s_{0}, \Phi) \ - \  p \sum_{(n,p)=1} \ \frac{\lambda_{\Phi}(n)}{n^{s_{0}}} \right\}. \nonumber
	\end{align}
	It follows from (\ref{gl3EPid}) that
	\begin{align}
			\mathcal{L}^{(p)}_{\pm}\left(s_{0}, s; \Phi\right) \bigg|_{d=p, \, \chi=\chi_{0}} 	\ \ &= \  \     \frac{p^{2s+s_{0}-1}}{\phi(p)} \,  L(2s-s_{0}, \chi_{0})  \, L(s_{0}, \Phi) \nonumber\\
		& \hspace{60pt} \,  \cdot \,  \left\{  \phi(p) \ - \  p   \left(1-\lambda_{\Phi}(p)p^{-s_{0}} +\overline{\lambda_{\Phi}}(p)p^{-2s_{0}}-p^{-3s_{0}}\right) \right\}  \nonumber\\
		\ \ &= \  \  \frac{p^{2s+s_{0}-1}}{\phi(p)} \, (1-p^{-(2s-s_{0})}) \left(-1 +\lambda_{\Phi}(p)p^{1-s_{0}}- \overline{\lambda_{\Phi}}(p) p^{1-2s_{0}}+p^{1-3s_{0}}\right) \nonumber\\
		&\hspace{60pt} \cdot \,  \zeta(2s-s_{0}) \, L(s_{0}, \Phi). \nonumber
	\end{align}
	
	On the other hand,  the contributions from $\chi\neq \chi_{0}$ in (\ref{afterorth}) is equal to 
	\begin{align}
			\mathcal{L}^{(p)}_{\pm}\left(s_{0}, s; \Phi\right) \bigg|_{d=p, \, \chi\neq \chi_{0}}  \ \ &= \  \   \frac{p^{2s+s_{0}-1}}{\phi(p)}  \     \sideset{}{^*}{\sum}_{\chi \, (\bmod\, p)} \  \chi(\mp 1)\tau(\chi) L(2s-s_{0}, \chi)  \, \sum_{(n,p)=1} \ \frac{\lambda_{\Phi}(n)\overline{\chi}(n)}{n^{s_{0}}}   \nonumber\\
			\ \ &= \ \   \frac{p^{2s+s_{0}-1}}{\phi(p)}  \     \sideset{}{^*}{\sum}_{\chi \, (\bmod\, p)} \  \chi(\mp 1)\tau(\chi) L(2s-s_{0}, \chi) L(s_{0}, \Phi\otimes \overline{\chi}). \nonumber
	\end{align}
	The conclusion of Proposition \ref{cuspdual} follows readily from
	\begin{align*}
			\mathcal{L}^{(p)}_{\pm}\left(s_{0}, s; \Phi \right)  \ \ = \ \  	\mathcal{L}^{(p)}_{\pm}\left(s_{0}, s; \Phi \right) \bigg|_{d=1} \ + \  	\mathcal{L}^{(p)}_{\pm}\left(s_{0}, s; \Phi\right) \bigg|_{d=p, \, \chi=\chi_{0}}   \ + \ 	\mathcal{L}^{(p)}_{\pm}\left(s_{0}, s; \Phi\right) \bigg|_{d=p, \, \chi\neq \chi_{0}}  
	\end{align*}
	and   analytic continuation. 

\end{proof}

It is certainly possible, but combinatorially involved, to obtain a version of (\ref{expinmul}) when $a$ is composite, i.e.,   rewrite $\mathcal{L}^{(a)}_{\pm}\left(s_{0}, s; \Phi \right)$ in terms of automorphic $L$-functions twisted by \textit{primitive} Dirichlet characters.  If $\Phi \cong 1 \boxplus 1 \boxplus 1$ or $\phi \boxplus 1$ with $\phi$ being a cusp form of $GL(2)$, the key technicalities behind seem to have been worked out in the recent work \cite{DHKL20}.   We will not pursue this task in order to maintain a reasonable length for this article.


\section{Concluding Remarks}\label{concrem}

\subsection{Relation to the 4th Moment of Dirichlet $L$-functions}\label{rel4thDIr}
We specialize to  $s=1/2$,\, $\re s_{0}=1/2$\,  and $\Phi= (E_{\min}^{(3)})^{^*}( \ * \ ; (0,0,0))$ in  (\ref{expinmul}). Once again, we  restrict ourselves  to  prime conductors only.  In this case, we have
\begin{align}
	\mathcal{L}^{(p)}_{\pm}\left(s_{0}, 1/2; \Phi \right)  \  \ =  \ \ 	& 	\mathcal{P}_{p}(s_{0}, s) \ \zeta(1-s_{0}) \zeta(s_{0})^3 \nonumber\\
	& \hspace{20pt} \ + \  	\frac{ p^{ s_{0}}}{\phi(p)}  \    \left( \ \  \sideset{}{^+}{\sum}_{\chi \ (\bmod\, p)} \ \  \mp \ \  \sideset{}{^-}{\sum}_{\chi \ (\bmod\, p)} \right) \ \tau(\chi) L(1-s_{0}, \chi)  L\left(s_{0}, \overline{\chi}\right)^3. 
\end{align}

Recall the functional equation for the primitive Dirichlet $L$-functions:
\begin{align}
	L(s_{0}, \overline{\chi}) \ \ = \ \  i^{-a_{\chi}}\, \frac{\tau(\overline{\chi})}{\sqrt{p}} \ p^{\frac{1}{2}-s_{0}} \  \frac{\Gamma_{\R}(1-s_{0}+a_{\chi})}{\Gamma_{\R}(s_{0}+a_{\chi})} \  L(1-s_{0}, \chi),
\end{align}
where $a_{\chi}=0$ if $\chi(-1)=1$ and $a_{\chi}=1$ if $\chi(-1)=-1$.  Using also the fact that $|\tau(\chi)|=\sqrt{p}$, it follows that
\begin{align}
	\sideset{}{^+}{\sum}_{\chi \ (\bmod\, p)}  \    \tau(\chi) L(1-s_{0}, \chi)  L\left(s_{0}, \overline{\chi}\right)^3 \  \ &= \ \  p^{1-s_{0}} \  \frac{\Gamma_{\R}(1-s_{0})}{\Gamma_{\R}(s_{0})}  \  \ 	\sideset{}{^+}{\sum}_{\chi \ (\bmod\, p)}  \  L(1-s_{0}, \chi)^2  L\left(s_{0}, \overline{\chi}\right)^2, \nonumber \\
	\sideset{}{^-}{\sum}_{\chi \ (\bmod\, p)}  \    \tau(\chi) L(1-s_{0}, \chi)  L\left(s_{0}, \overline{\chi}\right)^3 \  \ &= \ \  -i p^{1-s_{0}} \  \frac{\Gamma_{\R}(2-s_{0})}{\Gamma_{\R}(1+s_{0})}  \ \   \sideset{}{^-}{\sum}_{\chi \ (\bmod\, p)}  \  L(1-s_{0}, \chi)^2  L\left(s_{0}, \overline{\chi}\right)^2. 
\end{align}

The dual moment of 
\begin{align}
	& 2\  \sum_{j=1}^{\infty} \ H(\mu_{j}) \  \frac{ \lambda_{j}(a)\Lambda(1/2, \phi_{j})^3}{\langle \phi_{j}, \phi_{j}\rangle}  + \frac{1}{2\pi } \, \int_{\R} \ H(i\mu) \ \frac{ \sigma_{-2\mu}(a) a^{-\mu} |\Lambda(1/2+i\mu)|^6 }{|\Lambda(1+2i\mu)|^2} \ d\mu
\end{align}
is given by 
\begin{align}
p^{-1/2} \	\sum_{\pm} \ \  \int_{(1/2)} \  	\mathcal{L}^{(p)}_{\pm}\left(s_{0}, s; \Phi \right)   (\mathcal{F}_{\Phi}^{(\pm)}H)\left(s_{0},  s\right) \ \frac{ds_{0}}{2\pi i}, 
\end{align}
which is readily observed  to be a  sum of the following three weighted moments of $GL(1)$ $L$-functions:
\begin{align}\label{eisdualmo}
	&\hspace{60pt} p^{-1/2 } \, \int_{(1/2)} \ \mathcal{P}_{p}(s_{0}, s) \ |\zeta(s_{0})|^{4}  \   \frac{\Gamma_{\R}(1-s_{0})}{\Gamma_{\R}(s_{0})}  \cdot \sum_{\pm} \  (\mathcal{F}_{\Phi}^{(\pm)}H)\left(s_{0},  s\right)  \ \frac{ds_{0}}{2\pi i}, \nonumber\\[0.1in]
&\hspace{40pt}  \frac{p^{1/2}}{\phi(p)} \ \ \sideset{}{^+}{\sum}_{\chi \ (\bmod\, p)} \ \int_{(1/2)} \ \left|L(s_{0}, \chi)\right|^4 \ \frac{\Gamma_{\R}(1-s_{0})}{\Gamma_{\R}(s_{0})}   \cdot   \sum_{\pm} \  (\mathcal{F}_{\Phi}^{(\pm)}H)\left(s_{0},  s\right)  \ \frac{ds_{0}}{2\pi i},  \nonumber\\[0.1in]
	  \  &\frac{p^{1/2}}{\phi(p)} \ \ \sideset{}{^-}{\sum}_{\chi \ (\bmod\, p)} \ \int_{(1/2)} \ \left|L(s_{0}, \chi)\right|^4 \, \frac{\Gamma_{\R}(2-s_{0})}{\Gamma_{\R}(1+s_{0})} \cdot   i \left\{ (\mathcal{F}_{\Phi}^{(+)}H)\left(s_{0},  s\right)- (\mathcal{F}_{\Phi}^{(-)}H)\left(s_{0},  s\right)\right\} \ \frac{ds_{0}}{2\pi i}. \nonumber\\ 
\end{align}

\begin{rem}\label{reas}
	While the twists appeared on each side of the reciprocity formula (\ref{basicmototwis}),  namely $\left\{\lambda_{f}(p)\right\}$ and  $\left\{\chi  \bmod\, p\right\}$,  are seemingly unrelated as discussed in Section \ref{reciprsec},  the additive harmonics
	\begin{align}\label{addharper}
		\mathcal{E}_{p} \ := \  \left\{ \, e(\mp a_{1}\bar{\ell}/p): \, a_{1}\ge 1,\  \ell \ (\bmod\, p),\  (\ell, p)=1\, \right\}
	\end{align}
	obtained in our approach  connects the mentioned multiplicative twistings. Indeed,  $\mathcal{E}_{p}$ naturally relates to $\left\{\chi  \bmod\, p\right\}$ by the orthogonality relation (see Proposition \ref{cuspdual}), whereas  $\mathcal{E}_{p}$ pertains to   $\left\{\lambda_{f}(p)\right\}$  via  our period integral construction, more precisely,   the arithmetic factor  originating from the \textit{unipotent} translate of the \textit{Whittaker function} (see Corollary \ref{incomexpf}). 
	
	Additive harmonics of various shapes are involved in the methods of \cite{Y11, BHKM20}  as well,  but certainly in rather different ways, e.g.,  Kloosterman sums, Kuznetsov/ Poisson/ Voronoi formulae, $\delta$-symbol expansion etc. 
\end{rem}


	\subsection{The Twisted Cubic Moment Conjecture}
	
	In \cite{Kw23a+}, we stated and proved the untwisted version of the following conjecture:
	
		\begin{conj}\label{twiscubconj}
			For any integer $a\ge 1$,  the full set of the main terms for 
			\vspace{0.1in}
			\begin{align}\label{twishifcub}
				& \sum_{j=1}^{\infty} \ H(\mu_{j}) \  \frac{ \lambda_{j}(a)\Lambda(1/2- \alpha_{1}, \phi_{j})\Lambda(1/2- \alpha_{2}, \phi_{j})\Lambda(1/2- \alpha_{3}, \phi_{j})}{\langle \phi_{j}, \phi_{j}\rangle} \nonumber\\
				&\hspace{120pt} + \frac{1}{4\pi } \ \int_{\R} \ H(i\mu) \ \frac{ \sigma_{-2\mu}(a) a^{-\mu}\prod\limits_{i=1}^{3}\Lambda(1/2+i\mu -\alpha_{i}) \Lambda(1/2+i\mu+ \alpha_{i})}{|\Lambda(1+2i\mu)|^2} \ d\mu
			\end{align}
			is given by
			\begin{align}\label{twicubmo}
				&\hspace{20pt}     \frac{ 1}{2} \cdot \sum_{\epsilon_{1}, \epsilon_{2}, \epsilon_{3}= \pm 1} \ \zeta(1 - \epsilon_{1}\alpha_{1}- \epsilon_{2}\alpha_{2})  \zeta(1 - \epsilon_{1}\alpha_{1}-\epsilon_{3}\alpha_{3})  \zeta(1 - \epsilon_{2}\alpha_{2}- \epsilon_{3}\alpha_{3})  \nonumber\\
				&\hspace{60pt} \ \cdot \	a^{-\frac{1}{2}}  \ \prod_{p\mid a} \ \left\{   \tau_{-\epsilon\cdot \alpha}\left(p^{o_{p}(a)}\right) \ - \   \frac{\tau_{-\epsilon\cdot\alpha}\left(p^{o_{p}(a)-1}\right) }{p} \right\} \nonumber\\
				&\hspace{140pt}  \cdot 	 \int_{(0)}  \frac{H(\mu)}{|\Gamma(\mu)|^2} \cdot \prod_{i=1}^{3} \ \prod_{\pm}  \ \Gamma_{\R}\left(1/2 \pm \mu -\epsilon_{i}\alpha_{i}\right) \ \frac{d\mu}{2\pi i},
			\end{align}
			where $\alpha_{1}+\alpha_{2}+\alpha_{3}=0$, $\epsilon \cdot \alpha := (\epsilon_{1}\alpha_{1}, \epsilon_{2}\alpha_{2}, \epsilon_{3}\alpha_{3})$,   $\tau_{\alpha}(n):= \sum\limits_{d_1d_2d_3 = n} d_1^{-\alpha_1} d_2^{-\alpha_2} d_3^{-\alpha_3} $, and $(\phi_{j})_{j=1}^{\infty}$ is an orthogonal basis of  even  Hecke-normalized  Maass cusp forms of $SL_{2}(\Z)$ which satisfy $\Delta\phi_{j}= ( 1/4-\mu_{j}^2) \, \phi_{j}$. Note: $(\epsilon_{1}, \epsilon_{2}, \epsilon_{3})= (+1, +1, +1)$ corresponds to the $0$-swap term. 
		\end{conj}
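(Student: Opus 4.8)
The plan is to specialise $\Phi=(E_{\min}^{(3)})^{^*}(\,*\,;(\alpha_1,\alpha_2,\alpha_3))$ throughout Sections~\ref{Basicide}--\ref{twist} and to import the analytic machinery that \cite{Kw23a+} developed for the untwisted ($a=1$) case. For this $\Phi$ the isobaric factorisation $L(s,\phi_j\otimes\widetilde{\Phi})=\prod_{i=1}^{3}L(s-\alpha_i,\phi_j)$ turns the spectral side of Proposition~\ref{comspec} at $s=1/2$ into exactly the left-hand side of \eqref{twishifcub} (the sign of $\alpha_i$ being immaterial here by the functional equation of the even forms $\phi_j$). The structural change is that $\Phi$ no longer decays in the cusps, so $\bigl(P^a,\,\mathbb{P}_2^3\Phi\cdot|\det*|^{\overline s-\frac12}\bigr)_{\Gamma_{2}\setminus GL_{2}(\R)}$ must first be regularised as in \cite{Kw23a+}, and one checks that the regularisation commutes with the Fourier expansion of Corollary~\ref{incomexpf}. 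The first concrete new input is that the degenerate term $(\widehat{\Phi})_{(0,\,a/d)}$ on the right of \eqref{incomexpform} no longer vanishes; its contribution, together with the residual part produced by the regularisation, must be carried through the unfolding and clean-up of Section~\ref{inisect}.

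Next I would rerun the evaluation of the two diagonal-type pieces. Lemma~\ref{hecdoub}, Proposition~\ref{maintwisdiag} and Proposition~\ref{agreewithCFK} go through with the $GL(3)$ Hecke relation \eqref{splGL3Hec} replaced by the divisor identity for the Fourier coefficients of $E_{\min}^{(3)}(\,*\,;\alpha)$ and with the Euler products \eqref{3EUl}--\eqref{RSEul} specialised to isobaric forms. In particular the triple Dirichlet series \eqref{tripDS} again telescopes --- this time to a ratio of products of Riemann zeta functions --- and comparing Dirichlet coefficients as in the proof of Proposition~\ref{agreewithCFK} reproduces, for each sign pattern $\epsilon$, the finite Euler factor $\tau_{-\epsilon\cdot\alpha}(p^{o_p(a)})-p^{-1}\tau_{-\epsilon\cdot\alpha}(p^{o_p(a)-1})$ at the primes $p\mid a$, uniformly in the factorisation of $a$.

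The essential new phenomenon lives in the integral term $\tfrac{a^{-s}}{2}\sum_{\pm}\int_{(1/2)}\mathcal{L}^{(a)}_{\pm}(s_0,s;\Phi)\,(\mathcal{F}^{(\pm)}_{\Phi}H)(s_0,s)\,\tfrac{ds_0}{2\pi i}$. For Eisenstein $\Phi$ the additively twisted $L$-functions in \eqref{duamoadd} are, on the trivial-twist piece, the product $\prod_i\zeta(s_0+\alpha_i)$ and, on the remaining twists, Estermann-type series; hence $\mathcal{L}^{(a)}_{\pm}(s_0,s;\Phi)$ carries, besides the divisor $2s-s_0=1$ of Proposition~\ref{anacondtwi}, the extra polar divisors $s_0=1-\alpha_i$ ($i=1,2,3$), while $D^{(a)}_\Phi(s)$ and $\mathcal{R}^{(a)}(s;\Phi)$ acquire poles along $2s\mp\alpha_i=1$. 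These poles cancel in the total (which equals the holomorphic moment), but to reach the clean shape \eqref{twicubmo} at $s=1/2$ one moves the $s_0$- and $s$-contours into the critical position, collects the residues at $s_0=1-\alpha_i$ and at $2s\mp\alpha_i=1$, and reassembles the resulting archimedean integrals via the Barnes-type identities of \cite{Kw23a+} (the Eisenstein counterpart of Proposition~\ref{secMTcom}). Crossing no pole gives the honest diagonal, i.e.\ the $\epsilon=(+,+,+)$ term; each further $\epsilon\in\{\pm1\}^3$ comes from a definite subset of the crossings together with the $s\mapsto1-s$, $\Phi\mapsto\widetilde{\Phi}$ symmetry that already governs the dual term in Theorem~\ref{twisdirc}.

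The hard part is precisely this last assembly: showing that the residues, gamma factors, $\pm$-signs and local factors $\prod_{p\mid a}\{\cdots\}$ collapse into the eight shifted products $\zeta(1-\epsilon_1\alpha_1-\epsilon_2\alpha_2)\zeta(1-\epsilon_1\alpha_1-\epsilon_3\alpha_3)\zeta(1-\epsilon_2\alpha_2-\epsilon_3\alpha_3)$ with gamma-factor $\prod_{i=1}^{3}\prod_{\pm}\Gamma_{\R}(1/2\pm\mu-\epsilon_i\alpha_i)$ and with no spurious residual term surviving. This is the extra book-keeping anticipated after Conjecture~\ref{twiscubconj}; it is delicate because the $GL(3)$ divisor combinatorics enter the off-diagonal main term non-trivially, so the matching has to be done at the level of the (multiple) Dirichlet series rather than prime-by-prime, and one must simultaneously keep the regularisation of the divergent period under control.
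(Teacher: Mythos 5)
This statement is labelled a \emph{Conjecture} in the paper, and the paper deliberately does not prove it: Section \ref{concrem} records only that the archimedean functional relations and the non-archimedean identities for the $0$- and $3$-swap terms have been established (here and in \cite{Kw23a+}), and explicitly leaves the $1$- and $2$-swap terms, together with the final assembly, ``to a future work.'' Your proposal is a faithful reconstruction of the route the paper envisions --- specialise $\Phi$ to $(E_{\min}^{(3)})^{*}(\,*\,;\alpha)$, regularise the divergent period as in \cite{Kw23a+}, keep the now nonvanishing degenerate terms $(\widehat{\Phi})_{(0,\,a/d)}$, rerun the diagonal and off-diagonal evaluations with the triple Dirichlet series telescoping to ratios of shifted $\zeta$'s, and track the extra polar divisors $s_{0}=1-\alpha_{i}$ and $2s\mp\alpha_{i}=1$ that the Eisenstein coefficients introduce. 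So as a plan it is consistent with the paper.

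However, it is not a proof, and the gap is exactly where you place it yourself: the ``last assembly.'' Everything genuinely new in the conjecture --- that the residues picked up at the crossings of $s_{0}=1-\alpha_{i}$ and $2s\mp\alpha_{i}=1$ organise into precisely the eight sign patterns $\epsilon\in\{\pm1\}^{3}$, that for \emph{each} such $\epsilon$ (not just $\epsilon=(+,+,+)$ and its dual) the local factor at $p\mid a$ collapses to $\tau_{-\epsilon\cdot\alpha}(p^{o_{p}(a)})-p^{-1}\tau_{-\epsilon\cdot\alpha}(p^{o_{p}(a)-1})$, that the archimedean integrals reassemble into $\prod_{i}\prod_{\pm}\Gamma_{\R}(1/2\pm\mu-\epsilon_{i}\alpha_{i})$, and that the degenerate-term and regularisation contributions cancel rather than leaving residual terms --- is asserted rather than demonstrated. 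The analogue of Proposition \ref{agreewithCFK} for the $1$- and $2$-swap classes (where the crossing mixes one or two of the $\alpha_{i}$ with the $d\mid a$ and Ramanujan-sum combinatorics in a genuinely different way from the $3$-swap computation in Section \ref{ODMt}) is the core open content of the conjecture, and no argument is offered for it. A correct write-up would need, at minimum, the Eisenstein counterparts of (\ref{restwisDS})--(\ref{conv}) for each residue class of crossings and a verification that the cross terms between the degenerate Fourier coefficients and the regularised Poincar\'e pairing contribute nothing to the main terms.
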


In this article and \cite{Kw23a+}, we have established all of the archimedean functional relations  for showing agreement with the prediction (\ref{twicubmo}),  as well as the non-archimedean counterparts for the $0$- and $3$-swap terms.  The identities for the $1,2$-swap terms should follow in a similar fashion using the method of this article. (In \cite{Kw23a+}, this is very much true for the archimedean case.)  As indicated  by (\ref{eisdualmo}) and the main theorem of \cite{Kw23a+},  the full set of  main terms for the fourth moment of the Dirichlet $L$-functions  \`{a} la CFKRS should also be visible in the full spectral identity for (\ref{twishifcub}). This offers an alternative approach to  assemble the main terms,  distinct from the one of  \cite{Y11}. We shall leave these to a future work.  

\subsection{Supplementary Details Regarding the Integral Transform}\label{suppInt}

Recall the double integral of equation (\ref{cleanup}):
\begin{align}\label{doukeyint}
\hspace{20pt} 	 \int_{0}^{\infty} \int_{0}^{\infty} h(y_{1})\,  (y_{0}^2 y_{1})^{s-\frac{1}{2}}  \, e\left( \frac{a_{1}}{da_{0}}\, \frac{y_{0}^2}{1+y_{0}^2}\right)   W_{-\alpha(\Phi)} \left(\left|\frac{a_{1}}{da_{0}}\right|\, \frac{ y_{0}}{ 1+y_{0}^2}, \ y_{1}\sqrt{1+y_{0}^2}   \right)   \  \frac{dy_{0} \, dy_{1}}{y_{0}y_{1}^2}.
\end{align}
The Mellin inversion formula
\begin{align}
	f(y) \ = \ \int_{(\sigma_{0})} \, \left(\int_{0}^{\infty}\, f(X)X^{s_{0}-1} \, d^{\times}X\right) y^{1-s_{0}} \  \frac{ds_{0}}{2\pi i}
\end{align}
together with rearrangement of integrals imply  (\ref{doukeyint}) is equal to 
\begin{align}
\int_{(\sigma_{0})} \,  \left|\frac{a_{1}}{da_{0}}\right|^{1-s_{0}} \,  \int_{0}^{\infty} \int_{0}^{\infty} \, h(y_{1})\,  &(y_{0}^2 y_{1})^{s-\frac{1}{2}} 	\nonumber\\
&\hspace{-50pt} \cdot \int_{0}^{\infty} \, W_{-\alpha(\Phi)} \left(X \frac{ y_{0}}{ 1+y_{0}^2}, \ y_{1}\sqrt{1+y_{0}^2}   \right)  e\left( \pm X \frac{y_{0}^2}{1+y_{0}^2}\right)  X^{s_{0}-1} \, d^{\times} X\, \frac{dy_{0}\, dy_{1}}{y_{0}y_{1}^2} \frac{ds_{0}}{2\pi i}. 
\end{align}
Upon making the changes of variables $X\to X((1+y_{0}^2)/ y_{0}) $ and $y_{1}\to y_{1}/ \sqrt{1+y_{0}^2}$, one can further rewrite the above as
\begin{align}
	\int_{(\sigma_{0})} \,  \left|\frac{a_{1}}{da_{0}}\right|^{1-s_{0}} \,  \int_{0}^{\infty} \int_{0}^{\infty} \,  h\left(\frac{y_{1}}{\sqrt{1+y_{0}^2}}\right)  &  \int_{0}^{\infty} \, W_{-\alpha(\Phi)} \left(X , y_{1}   \right)  e\left( \pm Xy_{0}\right)  X^{s_{0}-1} \, d^{\times} X\ \nonumber\\
&\hspace{60pt}  \cdot  \frac{y_{0}^{2s-s_{0}}}{(1+y_{0}^2)^{\frac{s}{2}+\frac{1}{4}-s_{0}}} \, 	y_{1}^{s-\frac{1}{2}} \  \frac{dy_{0}\, dy_{1}}{y_{0}y_{1}^2}  \frac{ds_{0}}{2\pi i}. 
\end{align}
Plugging this into (\ref{cleanup}), the display (\ref{prop72kw23}) with (\ref{duaintrafir}) readily follow.  

In a similar manner, the argument appeared in  \cite[Section 4]{Kw23} leads to 
\begin{align}\label{gener}
(\ref{doukeyint}) \ =  \  \int_{(\sigma_{0})} \,  \left|\frac{a_{1}}{da_{0}}\right|^{1-s_{0}} \,   \int_{0}^{\infty} \int_{0}^{\infty} \, & h(y_{1}^{-1}) (y_{0}^2 y_{1}^{-1})^{s-\frac{1}{2}}  \nonumber\\
&\hspace{-70pt} \cdot \int_{0}^{\infty} \, 	W_{\rho(w_{\ell}) \Phi} \left[\begin{pmatrix}
	\pm X & & \\
	1 & 1& \\
	   &  & 1
\end{pmatrix} \begin{pmatrix}
y_{0}y_{1} & &  \\
                  & y_{1} & \\
                  &         & 1
\end{pmatrix}\right] X^{s_{0}-1} \, d^{\times} X \, d^{\times} y_{0} \, dy_{1}\, \frac{ds_{0}}{2\pi i}. 
\end{align}
 This yields  a slightly more intrinsic form of the integral transform, for (\ref{gener}) also holds for non-spherical test vector $\Phi$  and  it does not appeal to any matrix decomposition (say Iwasawa or Bruhat).  Of course, when $\Phi$ is specialized to be the spherical vector,  (\ref{gener})  is technically equivalent to the corresponding calculations  in this article as well as \cite{Kw23, Kw23a+}.

 \section{Acknowledgement}
The research is supported by the EPSRC grant: EP/W009838/1.   Part of the work was completed during the author's stay at  American Institute of Mathematics, Texas A\&M University,  the Chinese University of Hong Kong  and Queen's University.    The author would like to express his gratitude for  their generous hospitality.  It is a great pleasure to thank the reviewer for a careful reading of the article and his/ her valuable comments. 

\section{Data availability}
No data was generated or analyzed as part of the writing of this paper.

\section{Declarations}
On behalf of all authors, the corresponding author states that there is no conflict of interest.

\ \\
\end{document}